\newtheorem{theorem}{Theorem}[section]
\newtheorem{corollary}[theorem]{Corollary}
\newtheorem{lemma}[theorem]{Lemma}
\newtheorem{question}{Question}
\newtheorem{proposition}[theorem]{Proposition}
\theoremstyle{definition}
\newtheorem{example}{Example}
\newtheorem{definition}[theorem]{Definition}
\newtheorem{remark}[theorem]{Remark}
\numberwithin{equation}{section}
\newcommand{\bTheta}{\boldsymbol{\Theta}}
\newcommand{\RR}{\mathbb{R}}
\newcommand{\NN}{\mathbb{N}}
\newcommand{\QQ}{\mathbb{Q}}
\newcommand{\ZZ}{\mathbb{Z}}
\newcommand{\cS}{\mathcal{S}}
\newcommand{\cA}{\mathcal{A}}
\newcommand{\cM}{\mathcal{M}}
\newcommand{\cH}{\mathcal{H}}
\newcommand{\ba}{\mathbf{a}}
\newcommand{\bb}{\mathbf{b}}
\newcommand{\bh}{\mathbf{h}}
\newcommand{\bv}{\mathbf{v}}
\newcommand{\bM}{\mathbf{M}}
\newcommand{\bp}{\mathbf{p}}
\newcommand{\bL}{\mathbf{L}}
\newcommand{\bq}{\mathbf{q}}
\newcommand{\dist}{\mathrm{dist}}
\newcommand{\SL}{\mathrm{SL}}
\newcommand{\dd}{\; \mathrm{d}}
\newcommand{\bone}{\mathbbm{1}}
\newcommand{\eps}{\varepsilon}
\newcommand{\Sing}{\mathbf{Sing}}
\newcommand {\ignore}[1]  {}
\newif\ifdraft\drafttrue
\newcommand{\ggm}{G/\Gamma}
\newcommand{\btheta}{{\boldsymbol{\theta}}}
\newcommand{\sm}{\smallsetminus}
\renewcommand{\setminus}{\smallsetminus}
\begin{document}

\title{Divergent trajectories on products of homogeneous spaces}
\author{Jinpeng An}
\address{School of Mathematical Sciences, Peking University, Beijing, 100871, China}
\email{anjinpeng@gmail.com}

\author{Lifan Guan}
\address{Mathematisches Institut, Georg-August Universit{\"a}t G{\"o}ttingen, Bunsenstrasse 3-5, D-37073 Gottingen, Germany}
\email{guanlifan@gmail.com}

\author{Antoine Marnat}
\address{Universit\"at Wien, Vienna, Austria}
\email{antoine.marnat@univie.ac.at}

\author{Ronggang Shi}
\address{Shanghai Center for Mathematical Sciences, Jiangwan Campus, Fudan University, No.2005 Songhu Road, Shanghai, 200438, China}
\email{ronggang@fudan.edu.cn}

\thanks{J. An is supported by NSFC grant 11322101, L. Guan is supported by ERC Consolidator grant 648329, A. Marnat is supported by FWF Project I 3466-N35, and R. Shi is supported by NSFC grant 11871158.}

\maketitle

\begin{abstract}
In this paper, we determine the Hausdorff dimension of the set of points with divergent trajectories on the product of certain homogeneous spaces. The flow is allowed to be weighted with respect to the factors in the product space. The result is derived from its counterpart in Diophantine approximation. In doing this, we introduce a notion of jointly singular matrix tuples, and extend the dimension formula for singular matrices to such matrix tuples.
\end{abstract}

\section{Introduction}

The roots of the theory of Diophantine approximation lie in Dirichlet's Theorem. It asserts that for any real matrix $\btheta \in M_{m\times n}(\RR)$, the system of inequalities
$$\| \btheta \bq - \bp \|^m < Q^{-1} \quad \text{and} \quad   0< \|\bq\|^n \le Q $$
has an  integer solution $(\bp, \bq) \in \ZZ^m\times  \ZZ^n$ for any real number $Q\ge1$. Here $\|\cdot\|$ denotes the supremum  norm.  One of the central topics in Diophantine approximation is  to investigate matrices for which one can go beyond Dirichlet's Theorem. In this regard, a crucial object of study is the class of singular matrices introduced by Khintchine \cite{KhinSing1,KhinSing2}. A comprehensive survey on this topic is Moshchevitin \cite{MoshSurv}.

Recall that a matrix $\btheta \in M_{m\times n}(\RR)$ is \emph{singular} if for every $\epsilon>0$, the system of  inequalities
\begin{equation}\label{Sing}
\| \btheta \bq - \bp \|^m < \epsilon Q^{-1}  \quad \text{and} \quad   0< \|\bq\|^n \le Q
\end{equation}
has an integer solution $(\bp, \bq) \in \ZZ^m\times  \ZZ^n$ for any sufficiently large real number $Q$. Let $\Sing_{m,n}$ denote the set of all singular matrices in $M_{m\times n}(\RR)$. It is well-known that $\Sing_{1,1}=\QQ$. In general, a classical result of Khintchine states that $\Sing_{m,n}$ has Lebesgue measure $0$. Regarding the Hausdorff dimension, breakthroughs have been made recently by several groups of authors, which together give the following formula.

\begin{theorem}\cite{Ch,ChCh,KKLM,VarPrinc}\label{t-dfsu}
For any $(m,n)\in \NN^2$ with $(m, n)\ne (1, 1)$, we have
$$\dim \Sing_{m,n} = mn-\frac{mn}{m+n}.$$
\end{theorem}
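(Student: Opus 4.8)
The plan is to prove the two matching inequalities separately, since the equality is the combination of a sharp upper bound (valid for all $(m,n)$) and a sharp lower bound. For the upper bound $\dim\Sing_{m,n}\le mn-\frac{mn}{m+n}$ I would pass to homogeneous dynamics via the Dani correspondence. Put $d=m+n$, $g_t=\diag(e^{t/m}I_m,e^{-t/n}I_n)\in\SL_d(\RR)$ and $u_\btheta=\left(\begin{smallmatrix} I_m & -\btheta\\ 0 & I_n\end{smallmatrix}\right)$; then $\btheta\in\Sing_{m,n}$ exactly when the forward orbit $\{g_tu_\btheta\ZZ^d\}_{t\ge0}$ diverges in $X_d=\SL_d(\RR)/\SL_d(\ZZ)$, because a lattice vector of $g_tu_\btheta\ZZ^d$ of the form $(e^{t/m}(\btheta\bq-\bp),e^{-t/n}\bq)$ is short precisely when \eqref{Sing} holds with $Q=e^t$. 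The task is then to bound the Hausdorff dimension of the set of $\btheta$ in the $mn$-dimensional expanding horospherical leaf $U$ through the identity whose orbit leaves every compact subset of $X_d$. The analytic engine is a height (Margulis) function $\Delta$ on $X_d$ built from the whole hierarchy of successive minima — equivalently from the reciprocals of covolumes of primitive rational subgroups of all ranks — satisfying a contraction inequality $\int_U\Delta(g_tux)\,du\le Ce^{-\kappa t}\Delta(x)+D$ along $U$, with the decay exponent $\kappa$ dictated by the weights $1/m$, $1/n$. Iterating this inequality along a time sequence $t_j\to\infty$, while subdividing $U$ into $e^{t_j\cdot\mathrm{(expansion)}}$ pieces at each stage, produces a self-similar cover of the divergent set; the dimension count for this cover yields precisely the codimension $\frac{mn}{m+n}$.

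For the lower bound $\dim\Sing_{m,n}\ge mn-\frac{mn}{m+n}$ I would build a Cantor-type subset of $\Sing_{m,n}$ by an inductive self-similar construction. At stage $k$ one maintains a large finite family of admissible boxes in $M_{m\times n}(\RR)$, to each of which is attached a prescribed best-approximation datum: an integer pair $(\bp_k,\bq_k)$ chosen so that, along a geometric progression of heights $Q_k$, the Dirichlet-type inequalities $\|\btheta\bq_k-\bp_k\|^m<\epsilon_kQ_k^{-1}$ and $0<\|\bq_k\|^n\le Q_k$ are forced for every $\btheta$ in the box, with $\epsilon_k\to0$. Each box is then refined into a controlled number of sub-boxes; optimizing the branching ratio against the contraction ratio of the boxes — the branching is limited by how much room the next near-solution leaves — gives a limiting Cantor set of Hausdorff dimension $mn-\frac{mn}{m+n}$, all of whose points are singular by construction. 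Equivalently, one can run the entire argument in the language of parametric geometry of numbers and invoke a variational principle, reducing the computation to an optimization over admissible templates (trajectories of successive minima) subject to the constraint that forces divergence; the value of that optimization is $\frac{mn}{m+n}$.

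I expect the main obstacle to be matching the two constants. On the upper-bound side this forces one to exploit the full multi-component height function rather than tracking only the shortest vector, since a crude single-minimum bound is strictly weaker than $\frac{mn}{m+n}$; getting the optimal $\kappa$ in the integral inequality is the technical crux. On the lower-bound side one must engineer the construction so that the branching at each stage is as rich as the singularity constraint permits — in particular, retaining enough freedom in the choice of $(\bp_k,\bq_k)$ while still keeping the heights in a genuine geometric progression. The conceptual heart is that these two optimizations are dual and hence produce the same number $\frac{mn}{m+n}$; the excluded case $(m,n)=(1,1)$ is exactly the degeneracy $\Sing_{1,1}=\QQ$, where the Cantor construction collapses to a countable set.
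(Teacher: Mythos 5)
Your proposal matches the approach the paper itself attributes to the cited references: the upper bound via Dani's correspondence combined with the KKLM contraction property of a Margulis-type height function (yielding a self-similar cover with the optimal exponent), and the lower bound via the Das--Fishman--Simmons--Urba\'nski variational principle over templates in parametric geometry of numbers (with the Cantor-type construction being the Cheung--Chevallier antecedent for $n=1$). The paper does not reprove Theorem \ref{t-dfsu}; it quotes it, and your sketch is a faithful outline of those proofs.
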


Here and throughout the paper, ``$\dim$" refers to the Hausdorff dimension. The $(m,n)=(2,1)$ case of Theorem \ref{t-dfsu} is due to Cheung \cite{Ch}, and the $n=1$ case is due to Cheung and Chevallier \cite{ChCh}. In the general case, the sharp upper bound of $\dim \Sing_{m,n}$
was obtained by Kadyrov, Kleinbock, Lindenstrauss and Margulis in \cite{KKLM} using the contraction property of the height function, and the sharp  lower bound was obtained by Das, Fishman, Simmons and Urba\'nski \cite{VarPrinc} using a variational principle for parametric geometry of numbers. A related question is to calculate the dimension of weighted singular linear forms. The  dimension of   weighted singular vectors in $\mathbb R^2$ was obtained by Liao, Solan, Tamam and the forth named author in \cite{LSST}.

Thanks to Dani's correspondence \cite{Dani}, many Diophantine properties of $\btheta$ can be reformulated dynamically. Let $m,n \in \NN$, and let $$Y_{m+n}=\SL(m+n, \RR)/\SL(m+n, \ZZ).$$
Consider the one-parameter semigroup
\begin{equation}\label{eq;f+}
  F_{m,n}^+=\{g_t^{(m,n)}: t\ge 0\}, \quad\text{ where }\quad g_t^{(m,n)}=\left(\begin{array}{cc}e^{t/m}I_m &   \\   & e^{-t/n}I_n\end{array}\right).\footnote{Here the time parametrization of $g_t^{(m,n)}$ is chosen to be compatible with the definition of a template in \cite{VarPrinc}. It differs from that in \cite{KKLM} by a factor $mn$.}
\end{equation}
For $\btheta\in M_{m\times n}(\RR)$, denote
\begin{equation}\label{eq;utheta}
u_{\btheta}=\begin{pmatrix}
                I_m & \btheta \\
                0 & I_n
              \end{pmatrix} \quad\text{ and }\quad x_{\btheta}=u_{\btheta}\ZZ^{m+n}\in Y_{m+n}.
 \end{equation}
Then $\btheta\in M_{m\times n}(\RR)$ is singular if and only if the trajectory $F_{m,n}^+ x_{\btheta}$ is \emph{divergent}, i.e., it eventually leaves every compact subset of $Y_{m+n}$.

In general, let $G$ be a noncompact Lie group, $\Gamma\subset G$ be a nonuniform lattice, and $F^+=\{g_t: t\ge 0\}\subset G$ be a one-parameter subsemigroup. Let us say that a point $x\in \ggm$ is \emph{$F^+$-singular} if the corresponding trajectory $F^+x$ is divergent on $\ggm$.
The set $D(F^+, \ggm)$ of {$F^+$-singular} points has been extensively studied in recent years.  A related notion  was introduced in \cite{KKLM}:
For $\delta\in (0,1]$, let us say that a point $x\in \ggm$ is
\emph{$(F^+,\delta)$-singular}\footnote{Such points are called \emph{$\delta$-escape on average} in \cite{KKLM}. Here we use the terminology ``singular points" to emphasize their relation to singular matrices.}
if for any compact subset $K$ of $\ggm$, one has
\begin{equation*}
\limsup_{T\to \infty}\frac{1}{T}\int_0^T \mathbbm{1}_K (g_t x)\dd t\le 1-\delta,
\end{equation*}
where $\mathbbm{1}_K$ denotes the characteristic function of $K$. The set of $(F^+,\delta)$-singular points is denoted by $D_{\delta}(F^+, \ggm)$. As a dynamical counterpart of Theorem \ref{t-dfsu}, we have the following natural question.

\begin{question}\label{q-basic-dynamics}
What are $\dim D(F^+, \ggm)$ and $\dim D_{\delta}(F^+, \ggm)$?
\end{question}

In the most general case, it is proved in \cite{GS} that $\dim D_1(F^+, \ggm)<\dim \ggm$.
As a direct corollary of Theorem \ref{t-dfsu}, we have 
\begin{equation}\label{e-dim-dmn}
 \dim D(F_{m,n}^+, Y_{m+n})=\dim Y_{m+n}-\frac{mn}{m+n}, \qquad (m,n)\in \NN^2\sm\{(1,1)\}.
\end{equation}
The sharp upper and lower bounds of $\dim D_{\delta}(F_{m,n}^+, Y_{m+n})$ were also obtained in \cite{KKLM}  and \cite{VarPrinc}, respectively, which together give:

\begin{theorem}[\cite{KKLM,VarPrinc}]\label{T:1.4}
Let $(m,n)\in \NN^2$ and $\delta\in (0,1]$. Then
\begin{equation}\label{e-dfsu-2}
  \dim D_{\delta}(F_{m,n}^+, Y_{m+n})= \dim Y_{m+n}-\delta\frac{mn}{m+n}.
\end{equation}
\end{theorem}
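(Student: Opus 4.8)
The plan is to prove the two inequalities in \eqref{e-dfsu-2} by separate routes: the upper bound via the contracting height function of \cite{KKLM}, and the lower bound via the variational principle for parametric geometry of numbers of \cite{VarPrinc}; both are linked to Diophantine approximation through the Dani correspondence. Throughout I would use the following structural remark. Let $U^+=\{u_\btheta:\btheta\in M_{m\times n}(\RR)\}$ be the expanding horospherical subgroup of $g_t^{(m,n)}$, so $\dim U^+=mn$, and let $P^-\subset\SL(m+n,\RR)$ be the opposite (block lower triangular) parabolic; then $\dim P^-=\dim Y_{m+n}-mn$ and $\mathfrak{sl}_{m+n}=\Lie(U^+)\oplus\Lie(P^-)$. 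For every $p\in P^-$ the conjugates $g_t^{(m,n)}\,p\,(g_t^{(m,n)})^{-1}$, $t\ge0$, stay in a fixed compact set, so $F_{m,n}^+x$ and $F_{m,n}^+px$ have the same escape-on-average behaviour; hence $D_\delta(F_{m,n}^+,Y_{m+n})$ is $P^-$-invariant, and transverse to any $U^+$-plaque it is locally a product with an open piece of $P^-$, which will account for the summand $\dim Y_{m+n}-mn$ in the formula.

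\textbf{Upper bound.} I would invoke from \cite{KKLM} a proper Margulis-type height function $\mathfrak f\colon Y_{m+n}\to[1,\infty)$ for which there exist $b>0$, $t_0>0$, a bounded box $B\subset U^+$, and for each $\eta>0$ a constant $C_\eta$ with
$$\frac1{|B|}\int_B\mathfrak f\bigl(g_t^{(m,n)}u_{\bz}\,x\bigr)\dd\bz\ \le\ C_\eta\bigl(e^{-(1-\eta)t}\mathfrak f(x)+b\bigr),\qquad t\ge t_0,\ x\in Y_{m+n}.$$
The contraction rate here is $e^{-1}$ up to $\eta$, because with the parametrisation of $g_t^{(m,n)}$ the group $U^+$ is inflated at the rate $e^{t(m+n)/(mn)}$; in the normalisation of \cite{KKLM}, which differs by a factor $mn$, this is the usual $e^{-mn\,t}$ contraction. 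Next I would cover $Y_{m+n}$ by finitely many bounded $U^+$-plaques and fix a compact $K$ on which $\mathfrak f$ is bounded. If $x\in D_\delta(F_{m,n}^+,Y_{m+n})$ then $g_t^{(m,n)}x\notin K$ for a set of $t\in[0,T]$ of lower density $\ge\delta-o(1)$, so $\mathfrak f(g_t^{(m,n)}x)$ is large on that set; iterating the contraction inequality along a partition of $[0,T]$ into intervals of length $t_0$, with the self-similar re-boxing of \cite{KKLM}, one covers the time-$T$ approximation of $D_\delta(F_{m,n}^+,Y_{m+n})$ inside a plaque by at most $e^{T((m+n)-\delta)+o(T)}$ boxes of side $e^{-T(m+n)/(mn)}$. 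Together with the $P^-$-product structure across plaques this yields
$$\dim D_\delta(F_{m,n}^+,Y_{m+n})\ \le\ \frac{(m+n)-\delta}{(m+n)/(mn)}+(\dim Y_{m+n}-mn)\ =\ \dim Y_{m+n}-\delta\,\frac{mn}{m+n}.$$

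\textbf{Lower bound and the main difficulty.} By the product structure it is enough to produce a subset of $\mathcal S_\delta:=\{\btheta\in M_{m\times n}(\RR):x_\btheta\in D_\delta(F_{m,n}^+,Y_{m+n})\}$ of Hausdorff dimension $mn-\delta\frac{mn}{m+n}$: indeed $P^-\cdot\{x_\btheta:\btheta\in\mathcal S_\delta\}\subseteq D_\delta(F_{m,n}^+,Y_{m+n})$, and since $(p,\btheta)\mapsto p\,x_\btheta$ is a local bi-Lipschitz embedding of $P^-\times M_{m\times n}(\RR)$ into $Y_{m+n}$ (by $\mathfrak{sl}_{m+n}=\Lie(P^-)\oplus\Lie(U^+)$), this forces $\dim D_\delta(F_{m,n}^+,Y_{m+n})\ge\dim P^-+\bigl(mn-\delta\frac{mn}{m+n}\bigr)=\dim Y_{m+n}-\delta\frac{mn}{m+n}$. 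To build such a subset of $\mathcal S_\delta$ I would turn to parametric geometry of numbers: via the Dani correspondence $\btheta\in\mathcal S_\delta$ becomes a condition on the successive-minima functions $t\mapsto(\log\lambda_i(g_t^{(m,n)}u_\btheta\ZZ^{m+n}))_{i=1}^{m+n}$ (roughly, the normalised top exponent lies below a fixed negative level for a set of times of density $\ge\delta$), these functions are uniformly shadowed by piecewise-linear \emph{templates} (Schmidt--Summerer, Roy), and the variational principle of \cite{VarPrinc} realises, for any admissible template $\f$, a subset of the matrices with template $\f$ whose Hausdorff dimension equals an explicit ``lower'' functional of $\f$. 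One then constructs a family of admissible templates that escape on average at rate exactly $\delta$ and optimises this functional over their slopes and over the fraction of escaping time, obtaining the value $mn-\delta\frac{mn}{m+n}$. I expect the main obstacle to be precisely this final step: identifying the extremal template and evaluating the variational functional on it needs the full apparatus of \cite{VarPrinc}, and is considerably deeper than the upper bound, which---once the sharp contraction estimate is available---is by now a routine application of the height function of \cite{KKLM}.
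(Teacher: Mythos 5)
Your two-pronged outline --- upper bound from the KKLM height-function covering theorem, lower bound from the DFSU variational principle for templates, both transported from the horospherical slice $M_{m\times n}(\RR)$ to the full space via the $P\times U^+$ local product decomposition --- matches exactly the route the paper attributes to Theorem \ref{T:1.4} and itself re-uses for the generalization: the covering theorem is restated here as Theorem \ref{t-KKLM}, the variational principle as Theorem \ref{t-dfsu2}, and your parabolic-times-horosphere reduction is the same argument the paper gives when deriving Theorem \ref{t-main} from Proposition \ref{p-main}. The paper does not itself reprove Theorem \ref{T:1.4} but quotes it from \cite{KKLM} (upper bound) and \cite{VarPrinc} (lower bound), so your blueprint is consistent with both the citation and the paper's own application of these tools; the one thing to flag is that your ansatz $e^{-(1-\eta)t}$ for the height-function contraction rate is not exactly the exponent in \cite{KKLM}'s inequality (the correct data is already packaged in the covering statement, whose exponents $e^{(m+n-\delta)\ell t}$ and radius $e^{-\frac{m+n}{mn}\ell t}$ give the dimension bound directly), and the template optimization you defer to \cite{VarPrinc} is indeed the substantive step.
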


In this paper, we consider certain special cases of Question \ref{q-basic-dynamics}, namely, when the system $(F^+, \ggm)$ is a product of homogeneous systems. More precisely, let $s\ge 2$ be an integer, and let
\begin{equation}\label{eq;notation}
G=\prod_{i=1}^s G_i, \quad \Gamma=\prod_{i=1}^s \Gamma_i, \quad X_i=G_i/\Gamma_i, \quad X=\ggm=\prod_{i=1}^s X_i,
\end{equation}
where $G_i=\SL(m_i+n_i, \RR)$, $\Gamma_i=\SL(m_i+n_i, \ZZ)$, and $(m_i, n_i)\in \NN^2$. Let
$$A^+=\prod_{i=1}^s F_i^+,$$
where $F_i^+=F_{m_i, n_i}^+$ is given in  (\ref{eq;f+}).
Let $F^+$ be a one-parameter subsemigroup of $ A^+$
that projects non-trivially to each component.
The homogeneous system $(F^+, \ggm)$ is the main object of our study.
For any such   $F^+$, there exists $\ba=(a_1, \ldots, a_s)\in \RR_+^s$, where $\RR_+=(0,\infty)$, such that
\begin{equation}\label{eq;flow}
F^+=F^+_{\ba}:=\left\{g_t=\left(g_{a_1t}^{(m_1,n_1)},\ldots, g_{a_st}^{(m_s,n_s)} \right): t\ge 0\right\}.
\end{equation}
We say that $F^+_{\ba}$ is the one-parameter subsemigroup of $A^+$ associated to the  \emph{weight vector} $\ba$.
Note that $F^+_{\ba}=F^+_{\ba'}$ if and only if $\ba=c\,\ba'$ for some positive constant $c$.

For $1\le j\le s$, consider the natural projections $G\to\prod_{i\ne j}G_i$ and $X\to\prod_{i\ne j}X_i$. By abuse of notation, we denote both projections by $\pi_j$. Note that for $x\in X$, if some $\pi_j(x)$ is $\pi_j(F^+_\ba)$-singular (resp.~$(\pi_j(F^+_\ba), \delta)$-singular), then $x$ is $F^+_\ba$-singular (resp. $(F^+_\ba, \delta)$-singular). This motivates us to make the following definition.

\begin{definition}
A point $x\in X$ is \emph{essentially $F^+_\ba$-singular} (resp.~\emph{essentially $(F^+_\ba, \delta)$-singular}) if it is $F^+_\ba$-singular (resp.  $(F^+_\ba, \delta)$-singular) but for each $1\le j\le s$, $\pi_j(x)$ is not $\pi_j(F^+_\ba)$-singular (resp. not $(\pi_j(F^+_\ba),\delta)$-singular).
\end{definition}

It is easy to see that if $x\in X$ is essentially $F^+_\ba$-singular, then the divergent trajectory $F^+_\ba x$ is non-obvious in the sense of \cite{W}, hence is non-degenerate in the sense of \cite{Dani}.

Let us denote the set of essentially  $F^+_\ba$-singular (resp.~essentially $(F^+_\ba, \delta)$-singular) points by $D^e(F^+_\ba, X)$ (resp. $D^e_{\delta}(F^+_\ba, X))$. The main result  of this paper is  as follows.

\begin{theorem}\label{t-main}
Let $X$ be the product of $s$ homogeneous spaces given by (\ref{eq;notation})  with $s\ge 2$, and let $F_\ba^+$ be the one-parameter semigroup in (\ref{eq;flow}) associated to the weight vector $\ba\in \RR^s_+$.  Then
\begin{equation}\label{e-main1}
\dim D(F^+_\ba, X)=  \dim D^e(F^+_\ba, X)=\dim X- \min_{1\le i\le s} \frac{m_in_i}{m_i+n_i}
\end{equation}
and, for any $\delta\in (0, 1]$,
\begin{equation}\label{e-main2}
 \dim D_{\delta}(F^+_\ba, X)= \dim D^e_{\delta}(F^+_\ba, X)=\dim X-\delta\min_{1\le i\le s} \frac{m_in_i}{m_i+n_i}.
\end{equation}
\end{theorem}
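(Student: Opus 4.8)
The plan is to deduce Theorem \ref{t-main} from the single-factor results, Theorem \ref{T:1.4} and its corollary \equ{e-dim-dmn}, together with a Diophantine translation of the product problem in terms of ``jointly singular'' matrix tuples. The overarching strategy splits into an upper bound and a lower bound, and for each we reduce $D(F^+_\ba,X)$ to the essential version $D^e(F^+_\ba,X)$ plus a union over the ``non-essential'' strata, and similarly for $D_\delta$.

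\emph{Upper bound.} Since $D(F^+_\ba, X)\supseteq D^e(F^+_\ba, X)$ and, as observed just before the definition, $D(F^+_\ba,X)$ equals $D^e(F^+_\ba,X)$ together with $\bigcup_{j=1}^s \pi_j^{-1}\bigl(D(\pi_j(F^+_\ba),\prod_{i\ne j}X_i)\bigr)$, and the same for $D_\delta$, it suffices to bound $\dim D^e$ and each non-essential stratum. For the non-essential stratum indexed by $j$, the fiber $X_j$ contributes its full dimension and, by induction on $s$ (the base case $s=1$ being Theorem \ref{T:1.4} and \equ{e-dim-dmn}), the base $\prod_{i\ne j}X_i$ contributes $\dim\prod_{i\ne j}X_i-\min_{i\ne j}\frac{m_in_i}{m_i+n_i}$, so the stratum has dimension $\dim X-\min_{i\ne j}\frac{m_in_i}{m_i+n_i}\le \dim X-\min_{1\le i\le s}\frac{m_in_i}{m_i+n_i}$. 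For $D^e$, I would pass through Dani's correspondence to express essential singularity of $x=(x_{\btheta_1},\dots,x_{\btheta_s})$ on a full-measure set as a statement about the tuple $(\btheta_1,\dots,\btheta_s)$, and prove an upper bound $\dim\le \sum_i m_in_i-\min_i\frac{m_in_i}{m_i+n_i}$ by a covering argument: run the height-function contraction estimate of \cite{KKLM} simultaneously in all factors, noting that escape in \emph{every} factor forces, for each $i$, the $i$-th trajectory to spend a definite proportion of time high in the cusp, but essentiality forbids this from happening in any single factor alone for all large times; choosing the factor $i_0$ realizing $\min_i\frac{m_in_i}{m_i+n_i}$ and covering the other factors trivially gives the bound. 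The weighted flow $g_t$ only rescales time in each factor by the fixed constant $a_i>0$, which does not affect any of these dimension counts.

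\emph{Lower bound.} Here I would build an explicit large set of essentially $(F^+_\ba,\delta)$-singular points. Pick $i_0$ with $\frac{m_{i_0}n_{i_0}}{m_{i_0}+n_{i_0}}=\min_i\frac{m_in_i}{m_i+n_i}$. Using the variational-principle / template construction of Das--Fishman--Simmons--Urba\'nski \cite{VarPrinc} that produces the sharp lower bound in Theorem \ref{T:1.4}, choose in the $i_0$-th factor a set $S_{i_0}\subseteq X_{i_0}$ of $(\pi$-th-factor$)$-$(F^+,\delta)$-singular points with $\dim S_{i_0}=\dim X_{i_0}-\delta\frac{m_{i_0}n_{i_0}}{m_{i_0}+n_{i_0}}$; and in every other factor $i\ne i_0$ choose a \emph{bounded-orbit} set $B_i\subseteq X_i$ of full Hausdorff dimension $\dim X_i$ (Schmidt's theorem on badly approximable matrices, transported by Dani's correspondence, gives such a set, and boundedness guarantees $\pi_j(x)$ is not $(\pi_j(F^+_\ba),\delta)$-singular for any $j$). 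Then the product $B_1\times\cdots\times S_{i_0}\times\cdots\times B_s$ consists of essentially $(F^+_\ba,\delta)$-singular points, and since the factors are independent Hausdorff-dimension behaves additively on the product (using a Marstrand-type product inequality, valid because the bounded-orbit sets can be taken to support measures of the right Frostman exponent, e.g. Ahlfors-regular), giving $\dim\ge \dim X-\delta\frac{m_{i_0}n_{i_0}}{m_{i_0}+n_{i_0}}$. Taking $\delta=1$ yields the $D(F^+_\ba,X)$ statement, since $(F^+,1)$-singular (escape on average with full density) is implied by honest divergence, so the constructed set also lies in $D^e(F^+_\ba,X)$ after replacing $S_{i_0}$ by genuine divergent orbits from \cite{Ch,ChCh,VarPrinc}.

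\emph{Main obstacle.} The delicate point is the lower bound's product step: Hausdorff dimension is only \emph{superadditive} under products in general, so to get $\dim(B_1\times\cdots\times S_{i_0}\times\cdots)=\sum_i(\cdot)$ exactly I must ensure each factor carries a measure with matching Frostman lower exponent — this is automatic for the bounded-orbit Ahlfors-regular sets but requires that the \cite{VarPrinc} construction in the $i_0$-th factor produces not just a set of the right dimension but one supporting a measure with the sharp local dimension; extracting that from the template formalism, and checking that the weighting $a_{i_0}$ does not degrade it, is the technical heart. A secondary subtlety is verifying \emph{essentiality}: one must check that projecting out the $i_0$-th factor leaves a bounded orbit (clear) and projecting out any $i\ne i_0$ leaves an orbit that is still $(\cdot,\delta)$-singular but that has a non-singular companion — i.e. that the remaining product is genuinely essential, which forces the inductive bookkeeping to track the essential strata carefully rather than just raw divergence.
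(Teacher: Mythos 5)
Your plan has two serious gaps, one in each direction.

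\textbf{Upper bound.} The sentence ``The weighted flow $g_t$ only rescales time in each factor by the fixed constant $a_i>0$, which does not affect any of these dimension counts'' is precisely what fails, and the paper flags it explicitly (Remark \ref{R:dfct}). The KKLM covering theorem (Corollary \ref{c-KKLM}) covers $D_{\delta_i}(F_i^+, R, T_i)$ by balls of radius $e^{-\frac{m_i+n_i}{m_in_i}T_i}$. For weight $\ba$, the $i$-th factor has run for time $T_i=a_iT$, so the ball radii are $e^{-(a_i/b_i)T}$ with $b_i=\tfrac{m_in_i}{m_i+n_i}$, and these radii only match across factors when $\ba$ is proportional to $\bb_0=(b_1,\dots,b_s)$ — exactly the weight for which all positive Lyapunov exponents coincide. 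For a general $\ba$, the mismatched radii make the naive product cover suboptimal. A second, related error is the claim that one can ``choose the factor $i_0$ realizing $\min_i b_i$ and cover the other factors trivially.'' Escape of the product trajectory only requires the \emph{sum} of cusp proportions over factors to be $\ge\delta$; the $i_0$-th factor may individually spend proportion $\delta/s$ in the cusp, so your cover of $M_{i_0}$ gives far too many balls. The paper's fix is precisely Lemma \ref{l-easy} (distribute $\delta$ as $\sum\delta_i$ and optimize) together with the key Lemmas \ref{l-key}, \ref{l-inc}, which show that for purposes of computing this dimension one may replace the $\liminf$ set for weight $\ba$ by the $\limsup$ set for the special weight $\bb_0$; the latter is handled by the matching-radius covering of Lemma \ref{l-cover}. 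Your induction over the non-essential strata also has a base-case problem: for $s-1=1$ and $(m_i,n_i)=(1,1)$, formula \equ{e-dim-dmn} fails ($\dim\Sing_{1,1}=0$, not $1-\tfrac12$), which is why the paper must estimate $D^e$ from both sides and cannot bootstrap from one-factor statements.

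\textbf{Lower bound.} Your proposed product set $B_1\times\cdots\times S_{i_0}\times\cdots\times B_s$ (bounded orbits in all factors except a singular one in position $i_0$) does not consist of essentially singular points. For any $j\ne i_0$, the projection $\pi_j(x)$ still contains the $i_0$-th component, whose orbit is divergent; hence $\pi_j(x)$ is singular and $x\notin D^e$. You have in fact built the non-essential stratum $\pi_j^{-1}(D(\pi_j(F^+_\ba),\prod_{i\ne j}X_i))$, which for $s=2$ with both $(m_i,n_i)=(1,1)$ has dimension $5$, strictly below the claimed answer $5.5$, so this construction cannot yield the sharp lower bound for $D^e$. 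What the paper actually constructs (Lemmas \ref{p-lower-key-1}, \ref{p-lower-e-key}) is an $s$-tuple of templates that \emph{interleave}: at every large time at least one $L^i_1(a_it)$ is very negative (forcing the product orbit to diverge), but for each $j$ there are arbitrarily large times at which every $L^i_1(a_it)$, $i\ne j$, is near $0$, so no projection diverges. Only the template $\bL^1$ (with minimal $b_1$) pays a cost in lower average contraction rate, while the others remain essentially trivial, which is where the term $-\min_i b_i$ comes from. As a side remark, your worry about Frostman exponents is not needed: $\dim_H(A\times B)\ge\dim_H A+\dim_H B$ holds for arbitrary Borel sets, which is all the paper uses.

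Finally, note the paper's route goes through a reduction to matrix tuples (Proposition \ref{p-main}) via the diffeomorphism $P\times\bM\to PU\subset G$ and Borel density; your plan to ``pass through Dani's correspondence'' is morally the same step and is fine.
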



Theorem \ref{t-main} is new even for $X=\big(\SL(2,\RR)/\SL(2,\ZZ)\big)^s$, namely, when all the $(m_i, n_i)$ are equal to $(1, 1)$.
In this case, the dimension formula for $D(F^+_\ba, X)$ was conjectured by Y. Cheung via private communication with the fourth named author.
Cheung's motivation is his result in \cite{Cheung}  where he proved the formula in the case where $(m_i,n_i)=(1, 1)$ and $\ba=(1, \ldots, 1)$.
An extension of Cheung's result to products of hyperbolic spaces can be found in \cite{Y}. Let us also remark that if all the $(m_i, n_i)$ are different from $(1,1)$, the sharp upper bound of $\dim D(F^+_\ba, X)$, together with \eqref{e-dim-dmn}, is enough to imply the dimension formula for $D(F^+_\ba, X)$. However, if some $(m_i, n_i)$ is $(1,1)$, we need to estimate the dimension from both sides.

Note that the right hand sides of (\ref{e-main1}) and (\ref{e-main2}) are independent of the weight vector $\ba$. In fact,
our method of proof also implies that the Hausdorff dimensions of $\bigcup_{\ba \in \RR_+^s}D(F^+_\ba, X)$ and
$\bigcup_{\ba \in \RR_+^s}D_\delta (F^+_\ba, X)$
are equal to the right hand sides of (\ref{e-main1}) and (\ref{e-main2}), respectively. We will explain this at the end of Section \ref{sec;set}. Let us also remark that  the dimension formulas in Theorem \ref{t-main}
are local. This means that for any non-empty open subset $U$ of $X$, the intersections of the various singular points sets with $U$
have the same dimensions as themselves.

Theorem \ref{t-main} will be derived from its counterpart in Diophantine approximation, namely Proposition \ref{p-main} in the next section. Roughly speaking, Proposition \ref{p-main} gives the Hausdorff dimensions of certain sets of matrix tuples that are ``jointly singular". In particular, it shows that if $s\ge2$, then the $s$-tuples $(\theta_1,\ldots,\theta_s)\in\RR^s$ such that for every $\epsilon>0$ and every sufficiently large $Q$, there exists $q\in\NN$ with
\begin{equation}\label{joint0}
\min_{1\le i\le s}\dist(q\theta_i,\ZZ)<\epsilon Q^{-1} \quad \text{and} \quad q \le Q
\end{equation}
form a set of Hausdorff dimension $s-1/2$. Note that if the ``$\min$" sign in \eqref{joint0} is replaced by ``$\max$" and the term $\epsilon Q^{-1}$ is replaced by $\epsilon Q^{-1/s}$, we get the definition of $s$-dimensional singular vectors, which form a set of Hausdorff dimension $s-\frac{s}{s+1}$ by \cite{ChCh}. The precise definition of joint singularity and more examples will be given in Section \ref{s-reduction}.
\\

The organization of this paper is as follows. In Section \ref{s-reduction},  we deduce Theorem \ref{t-main} from its Diophantine counterpart Proposition \ref{p-main}, which gives the dimension formulas of   singular  points sets on an unstable horospherical leaf.
The proof of Proposition \ref{p-main} is the main  body of the paper and is given in two independent sections.

In Section \ref{UB}, we estimate the dimension from above using the covering theorem in \cite{KKLM}.
When the dynamical system $(F_\ba^+, X)$ has only one positive Lyapunov exponent,
the optimal  upper bound follows rather directly from the arguments in \cite{KKLM}. Otherwise, essential new ideas are needed, see Remark \ref{R:dfct}. We construct  a universal covering of the set of singular points
independent of the weight $\ba$. The key step is Lemma \ref{l-key}, which forms the main innovative part of Section \ref{UB}.
This method can also be used to give the sharp upper bound of the dimension of singular points set in other product systems.

In Section \ref{s-lower}, we give the estimates from below   using the variational principle in parametric geometry of numbers introduced in \cite{VarPrinc}. The variational principle enables us to  study a very large family of Diophantine sets, namely, that can be described using \emph{templates}, which are certain piecewise linear functions. In particular, the Hausdorff dimension of a Diophantine set that is associated to a certain template can be computed using only the information of  the template. By carefully choosing templates for all the $(m_i, n_i)$, we manage to construct certain product sets of matrices that give the optimal lower bounds.
\\

{\bf Acknowledgments:} We would like to thank Yitwah Cheung for introducing his conjecture and Weixiao Shen for the discussions on obtaining the optimal upper bound.

\section{Joint singularity of matrix tuples}\label{s-reduction}

In this section, we deduce Theorem \ref{t-main} from its counterpart in Diophantine approximation, which concerns ``joint singularity properties" of matrix tuples.  Let us fix an integer $s\ge 2$, a pair $(m_i, n_i)\in \NN^2$ for each $1\le i\le s$, and denote
\[M_i=M_{m_i\times n_i}(\RR)\quad \text{ and }\quad \bM=\prod_{i=1}^s M_i.\]
For $\bTheta=(\btheta_1,\ldots, \btheta_s)\in \bM$, let
\[u_{\bTheta}=(u_{\btheta_1},\ldots, u_{\btheta_s})\in G \quad\text{ and }\quad x_{\bTheta}=(x_{\btheta_1},\ldots, x_{\btheta_s})\in X, \]
where  $u_\btheta$ and $x_\btheta$ are as in (\ref{eq;utheta}).
It is easily checked that $U:=\{u_{\bTheta}: \bTheta\in \bM\}$ is the expanding horospherical subgroup of $G$ with respect to $F^+_\ba$ for any $\ba\in \RR^s_+$.
Consider the following sets of matrix tuples:
\begin{align}
D(F^+_\ba, \bM)&=\{\bTheta\in \bM: x_{\bTheta} \text{ is  } F_{\ba}^+\text{-singular}\},\label{e:D1}\\
D^e(F^+_\ba, \bM)&=\{\bTheta\in \bM: x_{\bTheta} \text{ is essentially } F_{\ba}^+\text{-singular}\},\label{e:D2}\\
D_{\delta}(F^+_\ba, \bM)&=\{\bTheta\in \bM: x_{\bTheta} \text{ is } (F_{\ba}^+, \delta)\text{-singular}\},\label{e:D3}\\
D^e_{\delta}(F^+_\ba, \bM)&=\{\bTheta\in \bM: x_{\bTheta} \text{ is essentially } (F_{\ba}^+, \delta)\text{-singular}\}.\label{e:D4}
\end{align}
In order to give the Diophantine interpretations of these sets, let us introduce some notation. For $\btheta \in M_{m\times n}(\RR)$ and $\epsilon>0$, let $\mathcal{Q}_\epsilon(\btheta)$ denote the set of all real numbers $Q\ge1$ such that the system of  inequalities \eqref{Sing} has an integer solution $(\bp, \bq) \in \ZZ^m\times  \ZZ^n$.
Moreover, for $\ba=(a_1, \ldots, a_s)\in \RR_+^s$, $\bTheta=(\btheta_1,\ldots, \btheta_s)\in \bM$, $\epsilon>0$ and $1\le j\le s$, denote
\begin{align*}
\mathcal{Q}_{\ba,\epsilon}(\bTheta)&:=\bigcup_{i=1}^s\mathcal{Q}_\epsilon(\btheta_i)^{1/a_i},\\
\mathcal{Q}_{\ba,\epsilon,j}(\bTheta)&:=\bigcup_{i\ne j}\mathcal{Q}_\epsilon(\btheta_i)^{1/a_i}.
\end{align*}
Let us say that a subset of $\RR$ is a \emph{neighborhood of $+\infty$} if it contains the interval $(C,+\infty)$ for some $C\in\RR$. Clearly, $\btheta$ is singular if and only if $\mathcal{Q}_\epsilon(\btheta)$ is a neighborhood of $+\infty$ for every $\epsilon>0$. The first statement in the following lemma generalizes this to matrix tuples.

\begin{lemma}\label{L:joint}
Let $\ba\in\RR_+^s$, $\bTheta\in \bM$, and  $\delta\in (0,1]$.
\begin{enumerate}
  \item $\bTheta\in D(F^+_\ba, \bM)$ if and only if $\mathcal{Q}_{\ba,\epsilon}(\bTheta)$ is a neighborhood of $+\infty$ for every $\epsilon>0$.
  \item $\bTheta\in D^e(F^+_\ba, \bM)$ if and only if $\mathcal{Q}_{\ba,\epsilon}(\bTheta)$ is a neighborhood of $+\infty$ for every $\epsilon>0$ but $\mathcal{Q}_{\ba,\epsilon_0,j}(\bTheta)$ is not a neighborhood of $+\infty$ for some $\epsilon_0>0$ and every $1\le j\le s$.
  \item $\bTheta\in D_{\delta}(F^+_\ba, \bM)$ if and only if
\begin{equation}\label{E:union}
\liminf_{T\to\infty}\frac{1}{T}\int_0^T \mathbbm{1}_{\mathcal{Q}_{\ba,\epsilon}(\bTheta)} (e^t)\dd t\ge\delta
\end{equation}
  for every $\epsilon>0$. 
  \item $\bTheta\in D^e_{\delta}(F^+_\ba, \bM)$ if and only if \eqref{E:union} holds for every $\epsilon>0$ but
  $$\liminf_{T\to\infty}\frac{1}{T}\int_0^T \mathbbm{1}_{\mathcal{Q}_{\ba,\epsilon_0,j}(\bTheta)} (e^t)\dd t<\delta$$
  for some $\epsilon_0>0$ and every $1\le j\le s$.
\end{enumerate}
\end{lemma}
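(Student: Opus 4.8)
The plan is to reduce everything to a one‑factor statement --- a quantitative form of Dani's correspondence --- and then treat the product by elementary combinatorics of ``escape‑time sets''. For $1\le i\le s$ and $r>0$, let $X_i[r]\subseteq X_i$ be the set of points all of whose nonzero lattice vectors have length $\ge r$; by Mahler's criterion $X_i[r]$ is compact and these sets increase to $X_i$ as $r\to0$. For $d>0$ put $P(d):=\prod_{i=1}^sX_i[e^{-a_id/n_i}]$; then each $P(d)$ is compact, and (applying Mahler to each coordinate projection) every compact subset of $X$ lies in some $P(d)$, so in each of (1)--(4) one may replace ``for every compact $K$'' by ``for every $d>0$'' (with $K=P(d)$). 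Since the nonzero vectors of the lattice $g_{a_it}^{(m_i,n_i)}x_{\btheta_i}$ are $\bigl(e^{a_it/m_i}(\btheta_i\bq-\bp),\,e^{-a_it/n_i}\bq\bigr)$ with $(\bp,\bq)\in(\ZZ^{m_i}\times\ZZ^{n_i})\smz$, one has $g_tx_{\bTheta}\notin P(d)$ iff for some $i$,
\begin{equation*}
\min_{(\bp,\bq)\in(\ZZ^{m_i}\times\ZZ^{n_i})\smz}\max\bigl(e^{a_it/m_i}\|\btheta_i\bq-\bp\|,\,e^{-a_it/n_i}\|\bq\|\bigr)<e^{-a_id/n_i}.
\end{equation*}
A direct computation with the diagonal flow --- using that for each $(\bp,\bq)$ the function $t\mapsto\max\bigl(e^{at/m}\|\btheta\bq-\bp\|,e^{-at/n}\|\bq\|\bigr)$ is convex in $t$, with minimum $\bigl(\|\btheta\bq-\bp\|^m\|\bq\|^n\bigr)^{1/(m+n)}$, and lies below a level $r$ on an explicit interval --- shows that, for $\btheta\in M_{m\times n}(\RR)$, $a>0$ and $r\in(0,1]$,
\begin{equation*}
\bigl\{t\ge0:\min_{(\bp,\bq)\in(\ZZ^m\times\ZZ^n)\smz}\max\bigl(e^{at/m}\|\btheta\bq-\bp\|,e^{-at/n}\|\bq\|\bigr)<r\bigr\}=\Bigl(\log\bigl(\mathcal Q_{r^{m+n}}(\btheta)^{1/a}\bigr)+\tfrac na\log\tfrac1r\Bigr)\ \cup\ B,
\end{equation*}
where $\log S:=\{\log q:q\in S\}$, the additive constant depends only on $m,n,a,r$, and $B\subseteq[0,\tfrac na\log\tfrac1r]$ is bounded. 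The radii in $P(d)$ were chosen so that this additive constant is the \emph{same} $d$ for every factor: applying the identity with $r=e^{-a_id/n_i}$ (so $\tfrac{n_i}{a_i}\log\tfrac1r=d$ and $r^{m_i+n_i}=\epsilon_i(d):=e^{-a_id(m_i+n_i)/n_i}$) gives
\begin{equation*}
\{t\ge0:g_tx_{\bTheta}\notin P(d)\}=\Bigl(\bigcup_{i=1}^s\log\bigl(\mathcal Q_{\epsilon_i(d)}(\btheta_i)^{1/a_i}\bigr)\Bigr)+d\ \cup\ B_d,\qquad B_d\ \text{bounded}.
\end{equation*}

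A translation by the fixed constant $d$ and the adjunction of the bounded set $B_d$ affect neither whether a set is a neighborhood of $+\infty$ nor its lower density $\liminf_{T\to\infty}\tfrac1T|\,\cdot\,\cap[0,T]|$. Moreover, by the monotonicity $\mathcal Q_\epsilon(\btheta)\subseteq\mathcal Q_{\epsilon'}(\btheta)$ for $\epsilon\le\epsilon'$, the set $\bigcup_i\log(\mathcal Q_{\epsilon_i(d)}(\btheta_i)^{1/a_i})$ is squeezed between $\log\mathcal Q_{\ba,\min_i\epsilon_i(d)}(\bTheta)$ and $\log\mathcal Q_{\ba,\max_i\epsilon_i(d)}(\bTheta)$, and as $d$ ranges over $(0,\infty)$ the quantities $\min_i\epsilon_i(d)$ and $\max_i\epsilon_i(d)$ decrease continuously onto $(0,1)$. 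Hence $\bTheta\in D(F^+_\ba,\bM)$, i.e.\ $g_tx_{\bTheta}$ eventually leaves every $P(d)$, if and only if $\mathcal Q_{\ba,\epsilon}(\bTheta)$ is a neighborhood of $+\infty$ for every $\epsilon>0$ --- this is item~(1); and, since $\liminf_T\tfrac1T\int_0^T\mathbbm 1_{\mathcal Q_{\ba,\epsilon}(\bTheta)}(e^t)\dd t=\liminf_T\tfrac1T|\log\mathcal Q_{\ba,\epsilon}(\bTheta)\cap[0,T]|$, $\bTheta\in D_\delta(F^+_\ba,\bM)$, i.e.\ the escape‑time set of every $P(d)$ has lower density $\ge\delta$, if and only if \eqref{E:union} holds for every $\epsilon>0$ --- this is item~(3). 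The argument works for any number of factors $\ge1$; for a single factor the displayed identity is the quantitative form of Dani's correspondence, refining the equivalence ``$\btheta$ singular $\iff$ $\mathcal Q_\epsilon(\btheta)$ is a neighborhood of $+\infty$ for all $\epsilon$'' recorded above.

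Items~(2) and~(4) follow by applying (1) and (3) to the subsystem $\prod_{i\ne j}$: $\pi_j(x_{\bTheta})$ is $\pi_j(F^+_\ba)$‑singular (resp.\ $(\pi_j(F^+_\ba),\delta)$‑singular) if and only if $\mathcal Q_{\ba,\epsilon,j}(\bTheta)$ is a neighborhood of $+\infty$ for every $\epsilon>0$ (resp.\ $\liminf_T\tfrac1T\int_0^T\mathbbm 1_{\mathcal Q_{\ba,\epsilon,j}(\bTheta)}(e^t)\dd t\ge\delta$ for every $\epsilon>0$). The negation of this holds for some $\epsilon_j>0$, hence --- by the monotonicity of $\mathcal Q_\epsilon$ --- for every smaller value; so $\epsilon_0:=\min_{1\le j\le s}\epsilon_j$ works for all $j$ at once, which is exactly what (2) and (4) assert.

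The step I expect to be the main obstacle is the quantitative one‑factor identity above: carrying out the explicit computation and, in particular, pinning down the additive time‑shift so that the choice $r=e^{-a_id/n_i}$ renders it independent of the factor --- this is what makes the escape‑time set of $P(d)$ a \emph{single} translate of a union of $\mathcal Q$‑sets, rather than a union of translates by $s$ different amounts, which would not behave well under taking measures. Once this is in place the remaining error terms --- the power change $\epsilon\mapsto r^{m+n}$ in the parameter and the bounded set $B$ --- are harmless, being absorbed by the quantification over all $\epsilon$ together with the monotonicity of $\mathcal Q_\epsilon$.
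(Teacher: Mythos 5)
Your approach is essentially the same as the paper's (Mahler's compactness criterion plus careful parameter conversion), organized around an explicit description of the escape-time sets for a specific exhausting family of compact sets $P(d)$. The paper proves only~(1), by a direct $c\leftrightarrow\epsilon$ chase; your formulation via $P(d)$ makes the uniform time-shift by $d$ transparent across factors and handles all four items in one sweep, which is a genuine improvement in cleanliness.

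One small imprecision worth flagging: your displayed ``identity'' for the escape-time set is not actually an equality. You correctly get the inclusion ``escape at $t\Rightarrow r^n e^{at}\in\mathcal{Q}_{r^{m+n}}(\btheta)$,'' but the reverse can fail at the countable set of $t$ where every witness has $\|\bq\|^n = r^n e^{at}$ exactly, because $\mathcal{Q}_\epsilon$ is defined with the non-strict bound $\|\bq\|^n\le Q$ while escape from the closed set $X_i[r]$ requires strict inequalities. (There is also no extra bounded piece $B$: for $r\le1$ and $t\ge0$, escape forces $\|\bq\|\ge1<re^{at/n}$, so the escape set already sits inside $(\tfrac{n}{a}\log\tfrac1r,\infty)$.) Since the discrepancy is countable, it is harmless for the density statements~(3) and~(4). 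For the ``neighborhood of $+\infty$'' statements~(1) and~(2), however, the one-sided inclusion alone does not give the direction ``$\mathcal{Q}_{\ba,\epsilon}\supset(C,\infty)$ for all $\epsilon$ $\Rightarrow$ eventual escape from $P(d)$''; you need to run the argument with a slightly larger parameter $d'>d$ (so that $\|\bq\|^{n_i}\le e^{a_i(t-d')}<e^{a_i(t-d)}$ converts the weak inequality into a strict one). This is exactly what the paper's factor of $1/2$ in ``$\min_j c^{n_j/a_j}e^t/2$'' accomplishes. Your quantification over $d$ and the monotonicity of $\mathcal{Q}_\epsilon$ already set this up, but you should state the two containments (escape set of $P(d)$ contained in a translate of $\mathcal{Q}_{\ba,\max_i\epsilon_i(d)}$, and escape set of $P(d)$ containing a translate of $\mathcal{Q}_{\ba,\min_i\epsilon_i(d')}$ for $d'>d$) rather than a single equality. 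With that correction the proof is complete and matches the paper's argument in substance.
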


\begin{proof}
We only prove (1). The proofs of (2)--(4) are in the same spirit and are left to the reader.

Suppose $\ba=(a_1, \ldots, a_s)$. From Mahler's compactness criterion, it is easy to see that $\bTheta\in D(F^+_\ba, \bM)$ if and only if the following statement holds:
\begin{itemize}
  \item[$(*)$] For every $c\in(0,1]$, there exists $t_0\ge0$ such that for every $t>t_0$, there exists $1\le i\le s$ and $(\bp,\bq)\in\ZZ^{m_i}\times\ZZ^{n_i}$ such that
$\|\btheta_i\bq-\bp\|<ce^{-a_it/m_i}$ and $0<\|\bq\|<ce^{a_it/n_i}$.
\end{itemize}
Suppose statement $(*)$ holds. Let $\epsilon>0$, we prove that $\mathcal{Q}_{\ba,\epsilon}(\bTheta)$ is a neighborhood of $+\infty$. Without loss of generality, we may assume $\epsilon\le1$. Applying statement $(*)$ with $c=\epsilon$, we see that there exists $t_0\ge0$ such that for every $t> t_0$, there exist $1\le i\le s$ and $(\bp,\bq)\in\ZZ^{m_i}\times\ZZ^{n_i}$ such that
$$\|\btheta_i\bq-\bp\|^{m_i}<\epsilon^{m_i}e^{-a_it}\le\epsilon e^{-a_it}$$
and
$$0<\|\bq\|^{n_i}<\epsilon^{n_i} e^{a_it}\le e^{a_it}.$$
Thus $\mathcal{Q}_{\ba,\epsilon}(\bTheta)$ contains $(e^{t_0},+\infty)$, hence is a neighborhood of $+\infty$.

Conversely, suppose that for every $\epsilon>0$, $\mathcal{Q}_{\ba,\epsilon}(\bTheta)$ is a neighborhood of $+\infty$.
To prove statement $(*)$, let $c\in(0,1]$. Let $C_0\ge1$ be such that $(C_0,+\infty)\subset\mathcal{Q}_{\ba,\epsilon}(\bTheta)$ with
$$\epsilon=\min_{1\le j\le s}c^{m_j+n_j},$$
and let
$$t_0=\log(2C_0)-(\log c)\max_{1\le j\le s}n_j/a_j.$$
Then, for $t>t_0$, the number $\min_{1\le j\le s}c^{n_j/a_j}e^t/2$ is greater than $C_0$, hence is in $\mathcal{Q}_\epsilon(\btheta_i)^{1/a_i}$ for some $1\le i\le s$. This means that there exists $(\bp,\bq)\in\ZZ^{m_i}\times\ZZ^{n_i}$ such that
$$\|\btheta_i\bq-\bp\|<\epsilon^{1/m_i} \left(\min_{1\le j\le s}c^{n_j/a_j}e^t/2\right)^{-a_i/m_i}<ce^{-a_it/m_i}$$
and
$$0<\|\bq\|\le \left(\min_{1\le j\le s}c^{n_j/a_j}e^t/2\right)^{a_i/n_i}<ce^{a_it/n_i}.$$
Thus statement $(*)$ holds. This proves (1).
\end{proof}

Lemma \ref{L:joint} tells us that the sets \eqref{e:D1}--\eqref{e:D4} consist of matrix tuples that are ``jointly singular" in certain senses, with the weight vector $\ba$. In particular, we say that a matrix tuple $(\btheta_1,\ldots, \btheta_s)$ is \emph{jointly $\ba$-singular} if it is in the set $D(F^+_\ba, \bM)$. Let us explain some special cases more explicitly.

\begin{example}
Suppose $(m_1,n_1)=\cdots=(m_s,n_s)=(m,n)$ and $\ba=(1,\ldots,1)$. Then $(\btheta_1,\ldots, \btheta_s)\in M_{m\times n}(\RR)^s$ is jointly $\ba$-singular if and only if for every $\epsilon>0$, the system of  inequalities
\begin{equation}\label{joint}
\min_{1\le i\le s}\| \btheta_i \bq - \bp \|^m < \epsilon Q^{-1} \quad \text{and} \quad 0< \|\bq\|^n \le Q
\end{equation}
has an integer solution $(\bp, \bq) \in \ZZ^m\times  \ZZ^n$ for any sufficiently  large real number $Q$. Note that if $(m,n)=(1,1)$, then \eqref{joint} is equivalent to \eqref{joint0}. Proposition \ref{p-main} below shows that the set of such tuples has Hausdorff dimension $mn(s-\frac{1}{m+n})$. \qed
\end{example}

\begin{example}
Suppose $s=2$ and $(m_1,n_1)=(m_2,n_2)=(1,1)$. Then for $(a_1,a_2)\in\RR_+^2$, a pair $(\theta_1,\theta_2)\in\RR^2$ is jointly $(a_1,a_2)$-singular  if and only if for every $\epsilon>0$ and every sufficiently large $C$, there exists $q\in\NN$ such that either
\begin{equation}\label{e:theta1}
\dist(q\theta_1,\ZZ)<\epsilon C^{-a_1} \quad \text{and} \quad q\le C^{a_1},
\end{equation}
or
\begin{equation}\label{e:theta2}
\dist(q\theta_2,\ZZ)<\epsilon C^{-a_2} \quad \text{and} \quad q\le C^{a_2}.
\end{equation}
If we instead require that one of \eqref{e:theta1} and \eqref{e:theta2} is always satisfied, then $\theta_1$ or $\theta_2$ is rational. However, Proposition \ref{p-main} implies that the set of jointly $(a_1,a_2)$-singular pairs has Hausdorff dimension $3/2$. \qed
\end{example}


Let us now formulate the Diophantine counterpart of Theorem \ref{t-main}.

\begin{proposition}\label{p-main}
Let the notation be as above. Then
$$ \dim D(F^+_\ba, \bM) = \dim D^e(F^+_\ba, \bM)=\sum_{i=1}^s m_in_i - \min_{1\le i\le s} \frac{m_in_i}{m_i+n_i},$$
and
 $$\dim D_{\delta}(F^+_\ba, \bM)= \dim D^e_{\delta}(F^+_\ba, \bM)=\sum_{i=1}^s m_in_i-\delta\min_{1\le i\le s} \frac{m_in_i}{m_i+n_i}.$$
\end{proposition}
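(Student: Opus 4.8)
The plan is to prove matching upper and lower bounds for the Hausdorff dimensions, treating the four sets $D(F_\ba^+,\bM)$, $D^e(F_\ba^+,\bM)$, $D_\delta(F_\ba^+,\bM)$, $D^e_\delta(F_\ba^+,\bM)$ together. Since $D^e_\delta\subset D_\delta$ and $D^e\subset D$ and $D(F^+_\ba,\bM)\subseteq D_\delta(F^+_\ba,\bM)$ for $\delta=1$ while $D=D_1$ in the divergence sense is not literally true, one keeps careful track: the chain of inclusions one actually uses is that any set whose upper bound we prove contains the set whose lower bound we prove. Concretely, it suffices to prove (a) the upper bound $\dim D_\delta(F^+_\ba,\bM)\le\sum_i m_in_i-\delta\min_i\frac{m_in_i}{m_i+n_i}$, which specializes at $\delta=1$ to a bound valid for $D(F^+_\ba,\bM)$ as well (a divergent trajectory is $(F^+_\ba,1)$-singular); and (b) the lower bound $\dim D^e_\delta(F^+_\ba,\bM)\ge\sum_i m_in_i-\delta\min_i\frac{m_in_i}{m_i+n_i}$, and likewise $\dim D^e(F^+_\ba,\bM)\ge\sum_i m_in_i-\min_i\frac{m_in_i}{m_i+n_i}$. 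The four stated equalities then follow by sandwiching.

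For the upper bound (Section \ref{UB} in the paper's plan), I would use the covering/contraction machinery of \cite{KKLM}. The naive approach — cover each factor $X_i$ separately by the set of points spending $\ge\delta$ fraction of time near infinity and take the product — fails because joint $\ba$-singularity only requires the \emph{minimum} over $i$ to be small, so the escape time in factor $i$ can be much less than $\delta T$; this is exactly the difficulty flagged in Remark \ref{R:dfct}. The fix is to build, via Lemma \ref{l-key}, a single covering of $D_\ba$ that is uniform in $\ba$: one partitions the time interval $[0,T]$ into blocks, on each block records which factor $i$ is "responsible" for the smallness of the minimum, and uses the multiplicative structure of the height functions $h_i$ on each $X_i$ together with their contraction under $g^{(m_i,n_i)}_t$. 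Summing the covering-number estimates over the (boundedly many) choices of responsible-factor patterns, the dominant term is governed by the factor $i$ minimizing $\frac{m_in_i}{m_i+n_i}$, and a Borel–Cantelli / entropy count gives the claimed exponent. The key point is that the contraction rate of the height function in factor $i$ under $g^{(m_i,n_i)}_{a_it}$ is $a_i\cdot\frac{m_i+n_i}{m_in_i}$ per unit... — the $a_i$'s cancel against the reparametrization, leaving the bound independent of $\ba$.

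For the lower bound (Section \ref{s-lower}), I would use the variational principle for parametric geometry of numbers of \cite{VarPrinc}. One fixes the index $i_0$ achieving $\min_i\frac{m_in_i}{m_i+n_i}$. For the factors $i\ne i_0$ one chooses templates forcing $\btheta_i$ to be badly approximable (or at least non-singular, and non-$(\cdot,\delta)$-singular), which can be done on a set of full dimension $m_in_i$ in $M_i$; this guarantees the "essentially" condition, i.e.\ that no $\pi_j(x)$ is singular. For the factor $i_0$ one chooses a template realizing the extremal singular (resp.\ $\delta$-singular) behavior, giving a set of dimension $m_{i_0}n_{i_0}-\frac{m_{i_0}n_{i_0}}{m_{i_0}+n_{i_0}}$ (resp.\ $m_{i_0}n_{i_0}-\delta\frac{m_{i_0}n_{i_0}}{m_{i_0}+n_{i_0}}$), exactly as in the proof of Theorem \ref{t-dfsu} via \cite{VarPrinc}. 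Taking the product of these sets over all $i$, using that Hausdorff dimension of a product is at least the sum of dimensions (one of the factors being Ahlfors-regular or using the Besicovitch–Federer type inequality for products, which holds here since the badly-approximable sets can be taken to support suitable measures), and using Lemma \ref{L:joint} to verify that every tuple in the product is jointly $\ba$-singular (resp.\ $(\cdot,\delta)$-singular) and essentially so, yields the lower bound.

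The main obstacle is the uniform upper bound, i.e.\ Lemma \ref{l-key}: making the covering argument of \cite{KKLM} yield a bound \emph{simultaneously} valid for all weight vectors $\ba$ and correctly capturing that it is the cheapest single factor (not the product) that controls the dimension. This requires a genuinely new combinatorial/geometric input — tracking the responsible factor along the orbit and exploiting that escaping in the "expensive" factors is strictly dimension-costly — rather than a routine product estimate; once that lemma is in place, the rest is bookkeeping. A secondary technical point is ensuring, on the lower-bound side, that the product set consists of \emph{essentially} singular points, which is handled by the badly-approximable choice in the non-extremal factors via Lemma \ref{L:joint}(2),(4).
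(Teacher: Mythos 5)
Your lower bound plan has a genuine gap. You propose taking $\btheta_{i_0}$ singular (realizing the extremal behavior of Theorem \ref{t-dfsu}) and the other $\btheta_i$ badly approximable, claiming this ``guarantees that no $\pi_j(x)$ is singular.'' That is false: for any $j\ne i_0$, the projection $\pi_j(x)$ retains the singular factor $\btheta_{i_0}$, and $h_{\btheta_{i_0},1}(a_{i_0}t)\to-\infty$ alone forces $\min_{i\ne j}h_{\btheta_i,1}(a_it)\to-\infty$, so $\pi_j(x)$ is $\pi_j(F^+_\ba)$-singular and essentiality fails; the same defect appears in the $\delta$-singular version. The paper's construction avoids this with a temporal coordination between the templates: $\bL^1$ (with $i_0=1$) is a template of lower average contraction rate $m_1n_1-b_1$ which makes $L^1_1$ escape \emph{on average} completely but returns to $0$ along a sparse sequence of times, so $\btheta_1$ is $1$-singular but \emph{not} singular (cf.\ Remark \ref{r-compare-sing}); meanwhile each $\bL^i$ for $i\ge 2$ is mostly trivial (hence $\underline{\delta}(\bL^i)=m_in_i$) with brief excursions timed precisely to cover those return windows, distributed cyclically over $i$. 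This is what simultaneously ensures $\min_iL^i_1(a_it)\to-\infty$ and $\limsup_t\min_{i\ne j}L^i_1(a_it)=0$ for every $j$, the hypotheses of Lemma \ref{p-lower-key-1}. Your product set has the right dimension and sits inside $D(F^+_\ba,\bM)$, but it does not sit inside $D^e(F^+_\ba,\bM)$, which Proposition \ref{p-main} also requires.

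Your upper bound sketch correctly identifies the difficulty (uniformity over the weight $\ba$) but the mechanism you propose does not match the paper's, and the cancellation you invoke is not correct in general. The actual Lemma \ref{l-key} is an elementary statement about bounded functions: given $f_1,\ldots,f_s:\RR_+\to[0,\infty)$ bounded and $1=\sigma_1\ge\cdots\ge\sigma_s>0$, for every $\epsilon>0$ and $t_0$ there exists $t\ge t_0$ with $\sum_i f_i(t)\le\epsilon+\sum_i f_i(\sigma_i t)$. Applied with $f_i(t)=\cA_i(R,a_it,\btheta_i)$ and $\sigma_i=b_i/a_i$, it yields Lemma \ref{l-inc}: the $\liminf$ escape set for weight $\ba$ is contained in the $\limsup$ escape set for any other weight $\bb$. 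One then specializes to $\bb_0=(m_1n_1/(m_1+n_1),\ldots,m_sn_s/(m_s+n_s))$, the unique weight up to scaling for which the per-factor covering radii $e^{-\frac{m_i+n_i}{m_in_i}b_i\ell T}$ in Corollary \ref{c-KKLM} all coincide at $e^{-\ell T}$, so the product covering of Lemma \ref{l-cover} works. Your assertion that ``the $a_i$'s cancel against the reparametrization'' holds only at $\bb_0$; for general $\ba$ the per-factor covering scales are incompatible, and the reduction to $\bb_0$ via Lemma \ref{l-inc} is precisely what makes the argument close.
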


The proof of Proposition \ref{p-main} will occupy the next two sections. In the rest of this section, we derive Theorem \ref{t-main} from Proposition \ref{p-main}.

\begin{proof}[Proof of Theorem \ref{t-main} modulo Proposition \ref{p-main}]
Let $P$ be the weakly contracting subgroup of $G$ with respect to $F^+_\ba$, i.e.,
$$P=\left\{h\in G: \text{ the set } \{ghg^{-1}: g\in F^+_\ba\} \text{ is bounded}\right\}.$$
	Then $P$ is a parabolic subgroup of $G$ whose Lie algebra is complementary to the Lie algebra of $U$. It is straightforward to verify that the set $PU:=\{pu: p\in P, u\in U \}$ consists of elements $(g_1,\ldots,g_s)$ in $G$ such that for each $1\le i\le s$, the submatrix of $g_i$ formed by its first $m_i$ rows and first $m_i$ columns is invertible. In particular, $PU$ is Zariski open in $G$.
On the other hand, by Borel's density theorem \cite{B}, every left coset of $\Gamma$ is Zariski dense in $G$.
It follows that the map
$$\pi: P\times \bM\rightarrow X, \quad (p, \bTheta)\mapsto px_{\bTheta}$$
is surjective.

Note that for any $p\in P$ and $x\in X$, if  $px$ is (essentially) $F_{\ba}^+$-singular or (essentially) $(F_{\ba}^+, \delta)$-singular, then so is    $x$.
Hence we have
$$\pi^{-1}( D(F^+_\ba, X))=P\times  D(F^+_\ba, \bM),$$
and  similar  equalities  with $D$  replaced by $D^e, D_\delta $ or $D^e_\delta$.
Since the multiplication map $P\times U\to PU $ is a diffeomorphism (see, e.g., \cite[Lemma 6.44]{Knapp}), the map  $\pi$ is a local diffeomorphism. Thus we have
\[\dim D(F^+_\ba, X)= \dim \pi^{-1}(D(F^+_\ba, X)),\]
and similar  equalities with $D$  replaced by $D^e, D_\delta $ or $D^e_\delta$.
Note that for any subset $Y$ of $\bM$, $\dim (P\times Y)= \dim P+\dim Y$.
So the dimension formulas   in
Theorem \ref{t-main} follow from Proposition \ref{p-main} and the fact that $\dim \bM=\sum_{i=1}^s m_in_i$.
\end{proof}

\section{The upper bounds}\label{UB}

The aim of this section is to  estimate    the dimensions in Proposition \ref{p-main} from above. Clearly, we have
$$D_{\delta}^e(F^+_\ba, \bM)\subset D_{\delta}(F^+_\ba, \bM)  \quad \text{and} \quad D^e(F^+_\ba, \bM)
\subset D(F^+_\ba, \bM) \subset  D_{1}(F^+_\ba, \bM).$$
So the sharp upper bounds of their dimensions  will follow from the following proposition.

\begin{proposition}\label{p-upper-bound}
Let $\delta\in (0,1]$ and $\ba\in \RR_+^s$, then
\begin{equation*}
 \dim D_{\delta}(F^+_\ba, \bM)\le \sum_{i=1}^s m_in_i-\delta\min_{1\le i\le s} \frac{m_in_i}{m_i+n_i}.
\end{equation*}
\end{proposition}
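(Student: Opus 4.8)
The plan is to prove the bound locally: it suffices to cover $D_\delta(F_\ba^+,\bM)\cap B$, for an arbitrary bounded box $B=\prod_iB_i\subset\bM$, by sets whose $d$-dimensional Hausdorff sum tends to $0$ with the scale whenever $d>\sum_im_in_i-\delta\min_ic_i$, where $c_i:=\tfrac{m_in_i}{m_i+n_i}$. Write $\lambda_i':=\tfrac1{m_i}+\tfrac1{n_i}=\tfrac1{c_i}$ for the Lyapunov exponent of the $g^{(m_i,n_i)}_t$-action on the horospherical factor $M_i$, and $\lambda_i:=a_i\lambda_i'$ for its exponent under $F_\ba^+$. The input from \cite{KKLM} is the single-factor covering estimate behind Theorem~\ref{T:1.4}, specialized to the horospherical leaf: fixing a large compact $K\subset Y_{m_i+n_i}$ (equivalently, by Dani's correspondence, fixing $\epsilon_0>0$), for every block-time $S>0$ and every $\delta'\in(0,1]$ the set of $\btheta_i\in B_i$ whose $g^{(m_i,n_i)}$-orbit spends a $\delta'$-fraction of $[0,S]$ outside $K$ — equivalently, with $e^s\in\mathcal Q_{\epsilon_0}(\btheta_i)$ for a $\delta'$-fraction of $s\in[0,S]$ — is covered by a collection $\mathcal C_i(S,\delta')$ of at most $C_i\,e^{(m_in_i-\delta'c_i)\lambda_i'S}$ cubes of side $e^{-\lambda_i'S}$. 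The union $\bigcup_{S,\delta'}\mathcal C_i(S,\delta')$ depends only on the $i$-th factor, not on $\ba$.

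By Lemma~\ref{L:joint}(3), if $\bTheta\in D_\delta(F_\ba^+,\bM)$ then, setting $R_i:=\{t\ge0:e^{a_it}\in\mathcal Q_{\epsilon_0}(\btheta_i)\}$, one has $\liminf_{T\to\infty}\tfrac1T\bigl|\bigcup_iR_i\cap[0,T]\bigr|\ge\delta$, hence, with $g_i(x):=e^{-x}\,|R_i\cap[0,e^x]|$, also $\liminf_{x\to\infty}\sum_ig_i(x)\ge\delta$; moreover $x\mapsto e^xg_i(x)$ is nondecreasing, so $g_i$ is $1$-Lipschitz. To build a covering at a prescribed scale $e^{-\tau}$ I cover the $i$-th factor at exactly that scale, which forces the block-time $S_i=c_i\tau$, i.e.\ common time $\tau/\lambda_i$; then $\btheta_i$ lies in a cube of $\mathcal C_i(c_i\tau,\delta_i)$ with $\delta_i=g_i(\log\tau-\log\lambda_i)$, so $\bTheta$ lies in one of at most $\prod_iC_i\,e^{(m_in_i-\delta_ic_i)\tau}=C\,e^{\bigl(\sum_im_in_i-\sum_ic_i\,g_i(\log\tau-\log\lambda_i)\bigr)\tau}$ cubes of side $e^{-\tau}$.

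The heart of the argument, and the step I expect to be the main obstacle, is Lemma~\ref{l-key}: for every weight $\ba$ and every $\bTheta\in D_\delta(F_\ba^+,\bM)$,
\[
\limsup_{\tau\to\infty}\ \sum_{i=1}^s c_i\,g_i(\log\tau-\log\lambda_i)\ \ge\ \delta\,\min_{1\le i\le s}c_i .
\]
The difficulty is that the $\lambda_i$ may be distinct, so working at a single scale $e^{-\tau}$ inspects the $i$-th factor at the \emph{desynchronized} time $\tau/\lambda_i$, and the inequality $\sum_ig_i\ge\delta$ — valid at any one common time — cannot be applied directly; the obvious substitutes (a single common time, or over-refining the slow factors down to the finest scale) yield only the weaker exponent $\sum_im_in_i-\delta\,(\min_ja_j)/\max_l\lambda_l$. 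I would prove the displayed inequality by Ces\`aro averaging: since the shifts $\log\lambda_i$ are bounded and $0\le g_i\le1$, $\tfrac1X\int_X^{2X}g_i(x-\log\lambda_i)\dd x=\tfrac1X\int_X^{2X}g_i(x)\dd x+o(1)$, whence $\limsup_x\sum_ig_i(x-\log\lambda_i)\ge\limsup_X\tfrac1X\int_X^{2X}\sum_ig_i(x)\dd x\ge\delta$, and multiplying by $\min_ic_i\le c_i$ gives the claim. The crucial feature is that the right-hand side above, and the admissibility threshold $\sum_ic_i\delta_i\ge\delta\min_jc_j$ on the count-patterns $(\delta_i)$ that occur at a good scale, are independent of $\ba$; this is what makes the resulting covering \emph{universal}, and in particular bounds $\dim\bigcup_{\ba\in\RR_+^s}D_\delta(F_\ba^+,\bM)$ as well. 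When all the $\lambda_i$ coincide this step is unnecessary — one works at a common time $T$, where $\sum_ig_i(\log T)\ge\delta-o(1)$ holds at once — which is the ``one positive Lyapunov exponent'' situation.

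To conclude: given $\bTheta\in D_\delta(F_\ba^+,\bM)\cap B$ and $\eta>0$, Lemma~\ref{l-key} supplies arbitrarily large $\tau$ with $\sum_ic_i\,g_i(\log\tau-\log\lambda_i)>\delta\min_jc_j-\eta$, at which the cube of side $e^{-\tau}$ containing $\bTheta$ lies in a collection of at most $C\,e^{(\sum_im_in_i-\delta\min_jc_j+\eta)\tau}$ cubes. Taking the union of these collections over all ``good'' $\tau$ (discretizing $\tau$ over $\NN$ and noting that the Ces\`aro argument and the pattern count are insensitive to this because $g_i$ is $1$-Lipschitz, so only polynomially many patterns $(\delta_i)$ matter at each $\tau$) gives a covering of $D_\delta(F_\ba^+,\bM)\cap B$ by sets of arbitrarily small diameter whose $d$-dimensional sum is $\lesssim\sum_{\tau\ge N}\mathrm{poly}(\tau)\,e^{(\sum_im_in_i-\delta\min_jc_j+\eta-d)\tau}$; this tends to $0$ as $N\to\infty$ whenever $d>\sum_im_in_i-\delta\min_jc_j+\eta$, and letting $\eta\to0$ yields $\dim\bigl(D_\delta(F_\ba^+,\bM)\cap B\bigr)\le\sum_im_in_i-\delta\min_ic_i$, which is Proposition~\ref{p-upper-bound}.
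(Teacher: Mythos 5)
Your proof is correct and its skeleton matches the paper's (cover $D_\delta$ using the \cite{KKLM} single-factor estimates with the block-time on each factor chosen so that the resulting cubes have a common side length), but the crucial ``desynchronization'' step is handled by a genuinely different and more elementary argument. The paper isolates this step in Lemma~\ref{l-key}, proved by a delicate induction on $s$ (truncating $f_s$ by $Q\epsilon$ and summing along a $\sigma_s^{-1}$-geometric progression of times), and then deploys it in Lemma~\ref{l-inc} to pass from the $\liminf$ of the $\ba$-covers to a $\limsup$ of covers for the single-exponent weight $\bb_0$, whose dimension is estimated separately in Lemma~\ref{l-special}. Your Cesàro averaging replaces both in one stroke: since each $g_i$ is bounded and the shifts $\log\lambda_i$ are constants, the averages of $g_i(\cdot-\log\lambda_i)$ and of $g_i(\cdot)$ over $[X,2X]$ differ by $O(1/X)$, so $\liminf_x\sum_ig_i(x)\ge\delta$ yields $\limsup_x\sum_ig_i(x-\log\lambda_i)\ge\delta$ at once; in log-time this is precisely the conclusion of the paper's Lemma~\ref{l-key}, and your averaging argument in fact reproves that lemma for arbitrary bounded functions, without induction. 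What the paper's route buys is a conceptually clean intermediate object (a $\limsup$ set for the weight $\bb_0$, manifestly independent of $\ba$, which also makes the universality remark at the end of Section~\ref{sec;set} transparent); what yours buys is brevity and robustness of the key inequality. Two small points to tidy in a full write-up: the \cite{KKLM} covering count carries a factor $(T/m_in_i)^{3\ell}$ that is exponential, not polynomial, in the scale $\tau=\ell T$, so it must be absorbed into your $\eta$-slack by fixing the block-step $T$ large (and then discretizing $\tau$ over $T\NN$ rather than over $\NN$); and the compact set supplied by \cite{KKLM} depends on $T$, so one should fix $T$ first and then choose $\epsilon_0$ so that the corresponding compact set contains $\tilde K(T)$ from Corollary~\ref{c-KKLM}. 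Also, ``$e^xg_i(x)$ is nondecreasing'' alone does not give the $1$-Lipschitz bound; you also need the a.e.\ upper bound $\tfrac{d}{dx}\big(e^xg_i(x)\big)\le e^x$, which holds because $e^xg_i(x)=|R_i\cap[0,e^x]|$. All of these are routine fixes and do not affect the argument.
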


\subsection{Auxiliary sets}\label{sec;set}
In this section we cover $D_{\delta}(F^+_\ba, \bM)$ by sets whose dimensions are easier to estimate from above.

For any $1\le i \le s$, we choose and fix a right invariant Riemannian metric $\dist_i(\cdot, \cdot)$ on $G_i$, which naturally induces a metric on $X_i=G_i/\Gamma_i$, also denoted by $``\dist_i"$, as follows:
\begin{equation*}
  \dist_i(g\Gamma_i, h\Gamma_i)=\inf_{\gamma\in \Gamma_i} \dist(g\gamma, h), \text{ where } g, h\in G_i.
\end{equation*}
Set $``\dist"$ to be the metric on $X$ given by
 $$\dist((x_1,\ldots, x_s),(y_1,\ldots, y_s))=\max_{1\le i\le s} \dist_i(x_i,y_i).$$
 For $R>0$, let
\begin{equation*}
B_{R}^{X}= \{x\in X: \dist(x, [1_G])\le R\} \quad \mbox{and} \quad E_{R}^{X}=X\setminus B_R^X,
\end{equation*}
  where $[1_G]$ denotes the coset of the   identity element $1_G$.
For $R, T>0$ and $0<\delta \le 1$,  let
\begin{equation}\label{e-def-set1}
\widetilde D_{\delta}(F_{\ba}^+, R, T)=\left\{\bTheta\in \bM: \frac{1}{T} \int_{0}^T \bone_{E_{R}^{X}}(g_t x_{\bTheta}) \dd t\ge \delta \right\}.
\end{equation}
The value $\frac{1}{T} \int_{0}^T \bone_{E_{R}^{X}}(g_t x_{\bTheta}) \dd t$ measures the proportion of the time up to $T$ that the trajectory $F_{\ba}^+ x_{\bTheta}$ spends in the set $E_R^{X}$. Thus, the set $\widetilde D_{\delta}(F_\ba ^+, R, T)$ can be thought of as  an approximation to the set $D_{\delta}(F_{\ba}^+, \bM)$.
Their precise relation can be stated as follows:
for any $ 0< \delta'<\delta\le 1$ and $R>0$, we have
\begin{equation}\label{e-set-app}
D_{\delta}(F_{\ba}^+, \bM)\subset \liminf_{T\rightarrow \infty}\widetilde  D_{\delta'}(F_{\ba}^+, R, T):= \bigcup_{T_1>0}\bigcap_{T>T_1}\widetilde D_{\delta'}(F_{\ba}^+, R, T).
\end{equation}
This gives our   first enlargement of $D_{\delta}(F_{\ba}^+, \bM)$.

Next we cover each $\widetilde D_{\delta'}(F_{\ba}^+, R, T)$ by a set
 defined using the data on each component of $X=\prod_{i=1}^s X_i$.
For $1\le i\le s$ and $R>0$, we set
\begin{equation*}
  B_{R}^{X_i}=\{x\in X_i: \dist_i(x, [1_{G_i}])\le R\}\quad \mbox{and}\quad
   E_{R}^{X_i}=X_i\setminus B_{R}^{X_i} .
  \end{equation*}
We write  $g_{i, t}= g_{ t}^{(m_i, n_i)} $ to simplify the notation.
 For $R, T>0$ and $\btheta\in M_i$, set
\begin{equation*}
\cA_i( R, T, \btheta)=\frac{1}{T} \int_{0}^T \bone_{E_{R}^{X_i}}(g_{i,t} x_\btheta) \dd t.
\end{equation*}
Since $\dist$ is defined as the maximum of all the  $\dist_i$, we have
\begin{equation*}
\frac{1}{T} \int_{0}^T \bone_{E_{R}^{X}}(g_{t} x_{\bTheta}) \dd t=
\frac{1}{T} \int_{0}^T \max_{1\le i\le s}\bone_{E_{R}^{X_i}}(g_{i,a_it} x_{\btheta_i}) \dd t
\le \cA( F_\ba ^+, R, T, \bTheta),
\end{equation*}
where
\begin{equation*}
\cA( F_\ba^+, R, T, \bTheta)=\sum_{i=1}^s \cA_i( R,a_i T, \btheta_i).
\end{equation*}
This together with (\ref{e-def-set1}) implies
\begin{equation}\label{e-def-set2}
  \widetilde D_{\delta'}(F_{\ba}^+, R, T)\subset D_{\delta'}(F_{\ba}^+, R, T):=\left\{\bTheta\in \bM: \cA(F_\ba^+, R, T, \bTheta)\ge \delta'\right\}.
\end{equation}
Combining \eqref{e-set-app} and \eqref{e-def-set2}, we get, for any $0< \delta'< \delta\le 1$,
$$D_{\delta}(F_{\ba}^+, \bM)\subset \liminf_{T\rightarrow \infty}  D_{\delta'}(F_{\ba}^+, R, T):= \bigcup_{T_1>0}\bigcap_{T>T_1}D_{\delta'}(F_{\ba}^+, R, T).$$
We summarize what we have obtained in the following lemma.
\begin{lemma}\label{lem;sum}
Suppose  $0< \delta'< \delta\le 1$, then 	
\begin{align}\label{eq;repeat}
D_{\delta}(F_{\ba}^+, \bM)\subset \liminf_{T\rightarrow \infty}  D_{\delta'}(F_{\ba}^+, R, T).
\end{align}
\end{lemma}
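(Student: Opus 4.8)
The plan is to derive the lemma by chaining the two inclusions already isolated above in this subsection, \eqref{e-set-app} and \eqref{e-def-set2}, together with the elementary monotonicity of the $\liminf$ construction: if $A_T\subset B_T$ for every $T>0$, then $\bigcup_{T_1>0}\bigcap_{T>T_1}A_T\subset\bigcup_{T_1>0}\bigcap_{T>T_1}B_T$.

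First I would justify \eqref{e-set-app}, i.e. $D_{\delta}(F_\ba^+,\bM)\subset\liminf_{T\to\infty}\widetilde D_{\delta'}(F_\ba^+,R,T)$ for $0<\delta'<\delta\le1$. Given $\bTheta\in D_{\delta}(F_\ba^+,\bM)$, the point $x_\bTheta$ is $(F_\ba^+,\delta)$-singular, so applying the defining inequality to the compact set $K=B_R^X$ (closed balls of finite radius in $X$ are compact) gives $\limsup_{T\to\infty}\frac1T\int_0^T\bone_{B_R^X}(g_t x_\bTheta)\dd t\le1-\delta$. Since $\bone_{E_R^X}=1-\bone_{B_R^X}$, this rewrites as $\liminf_{T\to\infty}\frac1T\int_0^T\bone_{E_R^X}(g_t x_\bTheta)\dd t\ge\delta>\delta'$, so there is $T_1>0$ with $\frac1T\int_0^T\bone_{E_R^X}(g_t x_\bTheta)\dd t\ge\delta'$ for all $T>T_1$; that is, $\bTheta\in\bigcap_{T>T_1}\widetilde D_{\delta'}(F_\ba^+,R,T)$, which is \eqref{e-set-app}.

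Next I would record the pointwise inclusion \eqref{e-def-set2}: for each fixed $R,T>0$, $\widetilde D_{\delta'}(F_\ba^+,R,T)\subset D_{\delta'}(F_\ba^+,R,T)$. This is immediate from the inequality $\frac1T\int_0^T\bone_{E_R^X}(g_t x_\bTheta)\dd t\le\cA(F_\ba^+,R,T,\bTheta)$ displayed just above \eqref{e-def-set2}, which in turn uses only the union bound $\bone_{E_R^X}=\max_{1\le i\le s}\bone_{E_R^{X_i}}\le\sum_{i=1}^s\bone_{E_R^{X_i}}$ on the factors of $X=\prod_{i=1}^s X_i$ together with the substitutions $t\mapsto a_it$ in the individual integrals. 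As this inclusion holds for every $T>0$, monotonicity of the $\liminf$ construction gives $\liminf_{T\to\infty}\widetilde D_{\delta'}(F_\ba^+,R,T)\subset\liminf_{T\to\infty}D_{\delta'}(F_\ba^+,R,T)$, and combining with the previous paragraph yields $D_{\delta}(F_\ba^+,\bM)\subset\liminf_{T\to\infty}D_{\delta'}(F_\ba^+,R,T)$, which is \eqref{eq;repeat}.

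Being essentially a bookkeeping statement, this lemma presents no real obstacle; the only point to watch --- and the one I would double-check --- is the passage from the singularity hypothesis to membership in $\widetilde D_{\delta'}$. That hypothesis is a $\limsup$ bound on the fraction of time the orbit spends in a compact set, and one must correctly flip it to a $\liminf$ bound on the fraction of time spent in the complement $E_R^X$, then use the strict inequality $\delta>\delta'$ to turn an asymptotic lower bound into one valid for all sufficiently large $T$. This is precisely why the statement is formulated with $\delta'<\delta$ rather than $\delta'=\delta$.
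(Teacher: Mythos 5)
Your proof is correct and follows exactly the route the paper itself takes: the lemma is explicitly presented there as a summary of the two inclusions \eqref{e-set-app} and \eqref{e-def-set2}, chained via the monotonicity of the $\liminf$ construction. The only part you spelled out in more detail than the paper is the verification of \eqref{e-set-app} (taking $K=B_R^X$, flipping $\limsup$ over $B_R^X$ into $\liminf$ over $E_R^X$, and using $\delta>\delta'$ to pass from an asymptotic bound to an eventual one), and that argument is sound, including the implicit use of compactness of closed metric balls in $X$ which holds by Hopf--Rinow.
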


The key step in our proof of Proposition \ref{p-upper-bound} is that the right hand side of (\ref{eq;repeat})
is contained in the   limsup set associated to  any weight vector.  More precisely, we have
the following lemma.

\begin{lemma}\label{l-inc}
	Let $0<\delta'<\delta\le 1$ and $\ba, \mathbf b\in \RR_+^s$, then
	\begin{equation}\label{eq;limsup}
 \liminf_{T\rightarrow \infty}  D_{\delta}(F_{\ba}^+, R,  T)\subset \limsup_{T\rightarrow \infty} D_{\delta'}(F_{\mathbf b}^+, R, T):=\bigcap_{T_1>0}\bigcup_{T>T_1}D_{\delta'}(F_{\mathbf b}^+, R, T).
	\end{equation}
\end{lemma}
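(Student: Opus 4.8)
The plan is to prove the claimed inclusion by contradiction, exploiting the fact that the averages $\cA_i$ become comparable across different weight vectors once one integrates them against the scale-invariant measure $\dd T/T$ and rescales each coordinate separately.

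First I would reformulate the two set-memberships in terms of the counting functions $F_i(S):=\int_0^S\bone_{E_R^{X_i}}(g_{i,t}x_{\btheta_i})\dd t$ attached to a fixed $\bTheta=(\btheta_1,\dots,\btheta_s)$ and fixed $R$; each $F_i$ is nondecreasing, $1$-Lipschitz, vanishes at $0$, satisfies $0\le F_i(S)\le S$, and one has $\cA_i(R,a_iT,\btheta_i)=F_i(a_iT)/(a_iT)$. Thus $\bTheta\in\liminf_{T\to\infty}D_\delta(F_\ba^+,R,T)$ means there is $T_1$ with $\sum_iF_i(a_iT)/(a_iT)\ge\delta$ for all $T>T_1$, while $\bTheta\notin\limsup_{T\to\infty}D_{\delta'}(F_\bb^+,R,T)$ means there is $T_2$ with $\sum_iF_i(b_iT)/(b_iT)<\delta'$ for all $T>T_2$. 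After enlarging $T_2$ we may assume $T_2\ge T_1$, so that both inequalities hold for every $T>T_2$.

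Then, for any $W>T_2$, I would integrate the first inequality against $\dd T/T$ on $(T_2,W)$ and substitute $u=a_iT$ in the $i$-th term, obtaining $\sum_i\int_{a_iT_2}^{a_iW}F_i(u)u^{-2}\dd u\ge\delta\log(W/T_2)$; the same manipulation applied to the second inequality gives $\sum_i\int_{b_iT_2}^{b_iW}F_i(u)u^{-2}\dd u<\delta'\log(W/T_2)$. Since $0\le F_i(u)u^{-2}\le u^{-1}$, replacing the limits of the $i$-th integral by $b_iT_2,b_iW$ changes its value by at most $2|\log(b_i/a_i)|$; summing over $i$ and combining the two bounds yields $(\delta-\delta')\log(W/T_2)\le C$ with $C:=2\sum_{i=1}^s|\log(b_i/a_i)|$ independent of $W$. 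Letting $W\to\infty$ contradicts $\delta>\delta'$, which finishes the proof.

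The one point that genuinely requires care is the choice of the logarithmic weight $\dd T/T$: its pushforward under the dilation $T\mapsto a_iT$ is $\dd u/u$ with no Jacobian factor, which is exactly what makes the $\ba$- and $\bb$-versions of each integral differ only through their endpoints, hence by a bounded amount. With the weight $\dd T$ instead one would be left with uncompensated factors $1/a_i$ versus $1/b_i$ and the comparison would collapse. The remaining verifications---lower semicontinuity (hence measurability in $t$) of $\bone_{E_R^{X_i}}(g_{i,t}x_{\btheta_i})$ and the legitimacy of the linear change of variables---are routine.
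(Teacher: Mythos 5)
Your argument is correct, and it takes a genuinely different route from the paper's. The paper's proof reduces Lemma~\ref{l-inc} to the separate combinatorial Lemma~\ref{l-key} (proved by induction on $s$, with a telescoping argument over powers of $\sigma_s^{-1}$), which, applied pointwise to $f_i(t)=\cA_i(R,a_it,\btheta_i)$ and $\sigma_i=b_i/a_i$ after a suitable rescaling and reordering, produces a single time $T\ge T_1$ at which $\cA(F_{\ba}^+,R,T,\bTheta)\le\tfrac12(\delta-\delta')+\cA(F_{\bb}^+,R,T,\bTheta)$; this contradicts the two-sided bound \eqref{e-bound}. You instead integrate both membership inequalities against the dilation-invariant measure $\dd T/T$ over a window $(T_2,W)$, so that the change of variable $u=a_iT$ (resp.\ $u=b_iT$) turns the $\ba$- and $\bb$-weighted averages into the \emph{same} integral $\int F_i(u)u^{-2}\dd u$ up to endpoint shifts bounded by $O(|\log(b_i/a_i)|)$ (using $0\le F_i(u)\le u$), and then send $W\to\infty$. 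Both proofs exploit the same underlying phenomenon — that time-averages along the two flows become comparable once one averages once more — but your argument is shorter and more conceptual, isolating scale-invariance of $\dd T/T$ as the mechanism, whereas the paper's Lemma~\ref{l-key} is a reusable pointwise statement about bounded functions that holds independent interest. A small polish: your $2|\log(b_i/a_i)|$ bound can in fact be tightened to $|\log(b_i/a_i)|$ (view $v\mapsto\int_{vT_2}^{vW}F_i(u)u^{-2}\dd u$ as a function with derivative bounded by $1/v$), though this is immaterial.
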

The proof of Lemma \ref{l-inc} will be given in Section \ref{sec;key}. The above two lemmas reduce the proof  of Proposition \ref{p-upper-bound} to estimating the dimension of the right hand side of (\ref{eq;limsup}) for a convenient weight $\mathbf b$.
The special weight we are using will be
\begin{equation}\label{eq;bprime}
{\mathbf b}_0=\left(\frac{m_1n_1}{m_1+n_1},\ldots,\frac{m_sn_s}{m_s+n_s}\right).
\end{equation}
In this case the dynamical system $(F^+_{{\mathbf b}_0}, X)$ has a single positive Lyapunov exponent.
By Lemmas \ref{lem;sum} and \ref{l-inc}, for any $0<\delta'<\delta\le 1$,
\begin{align}\label{eq;clear}
D_\delta(F^+_\ba, \bM )\subset D_{\delta'} :=\bigcap_{R>0}\bigcap_{T_1>0}\bigcup_{T>T_1}D_{\delta'}(F_{{\mathbf b}_0}^+, R, T).
\end{align}
So Proposition \ref{p-upper-bound} will follow from the following lemma.
\begin{lemma}\label{l-special}
Let $\delta\in (0,1]$, then
\begin{equation}\label{eq;typo}
\dim D_\delta
\le \sum_{i=1}^s m_in_i-\delta\min_{1\le i\le s} \frac{m_in_i}{m_i+n_i}.
\end{equation}
\end{lemma}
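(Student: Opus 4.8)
The plan is to reduce the estimate for the limsup set $D_{\delta'}$ to a self-contained covering estimate for the single-exponent flow $F^+_{\mathbf b_0}$, and then to run the Kadyrov--Kleinbock--Lindenstrauss--Margulis covering machinery on each scale. First I would recall from \cite{KKLM} that associated to the expanding horospherical subgroup $U$ and the flow $F^+_{\mathbf b_0}$ there is a height function $\Delta$ on $X$ (built from the successive minima of the lattices on each factor, combined additively across the $s$ factors) which has the contraction property: there exist $c<1$ and $b>0$ and a time $t_0$ such that averaging $\Delta\circ g_{t_0}$ over a standard partition of a $U$-box contracts $\Delta$ by the factor $c$ up to the additive error $b$. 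Since $\mathbf b_0$ is chosen so that the flow has a single positive Lyapunov exponent $\lambda$ (all expansion rates on $U$ equal, because $\frac{m_i n_i}{m_i+n_i}$ is, up to the common factor, the per-coordinate expansion on the $i$-th unstable block), the expansion of $U$ under $g_{t}$ is isotropic at rate $\lambda t$, which is exactly the situation where the KKLM covering argument gives a clean bound.

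Next I would fix $R$ and $T_1$, and for each large $T > T_1$ produce an efficient cover of $D_{\delta'}(F^+_{\mathbf b_0}, R, T)$ by $U$-boxes of sidelength $\asymp e^{-\lambda T}$. The point is that $\bTheta \in D_{\delta'}(F^+_{\mathbf b_0}, R, T)$ means the trajectory $(g^{(\mathbf b_0)}_t x_{\bTheta})_{0\le t\le T}$ spends at least a $\delta'$-proportion of its time in the cusp neighborhood $E_R^X$, i.e. at height $\ge$ some threshold depending on $R$. Partitioning $[0,T]$ into $N = T/t_0$ steps and iterating the contraction inequality $N$ times along a refining sequence of partitions of the initial $U$-box, one gets that the measure (in $U$, normalized so the unit box has measure $1$) of the sub-box on which the time-average of $\Delta$ is $\ge \delta' \cdot(\text{threshold})$ is bounded by roughly $c^{\,\delta' N}$ times a bounded factor --- this is the standard ``the set where a contracted function is large is small'' estimate, combined with a large-deviation counting of which of the $N$ steps land in the cusp. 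Converting this measure bound into a count of $e^{-\lambda T}$-boxes needed to cover the bad set, and recalling $\dim U = \sum_i m_i n_i$, yields
\[
N\!\left(D_{\delta'}(F^+_{\mathbf b_0}, R, T),\ e^{-\lambda T}\right)\ \ll\ e^{\lambda T\sum_i m_in_i}\cdot e^{-\lambda T \,\delta'\, \kappa}
\]
for $\kappa = \min_{1\le i\le s}\frac{m_in_i}{m_i+n_i}$ up to normalization, where the exponent $\kappa$ comes from comparing the contraction rate $-\log c$ per step to the expansion rate $\lambda t_0$ per step, exactly as the weight ratio in the single-exponent case. Then for the limsup set $\bigcup_{T>T_1} D_{\delta'}(F^+_{\mathbf b_0},R,T)$, taking $T\to\infty$ and using the definition of Hausdorff dimension via efficient covers at arbitrarily fine scales gives
\[
\dim \bigcap_{T_1>0}\bigcup_{T>T_1}D_{\delta'}(F^+_{\mathbf b_0},R,T)\ \le\ \sum_{i=1}^s m_in_i-\delta'\min_{1\le i\le s}\frac{m_in_i}{m_i+n_i};
\]
letting $\delta' \uparrow \delta$ and intersecting over $R>0$ (which only shrinks the set) finishes Lemma \ref{l-special}.

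The step I expect to be the main obstacle is making the height function and its contraction property work uniformly and additively across the $s$ factors while staying compatible with the specific normalization of $\mathbf b_0$ that forces a single Lyapunov exponent. In \cite{KKLM} the height function is tailored to one copy of $\SL(d,\RR)/\SL(d,\ZZ)$ with its natural diagonal flow; here I need the sum $\Delta = \sum_i \Delta_i$ of the per-factor heights to contract simultaneously under $g^{(\mathbf b_0)}_{t}$, which requires that the contraction times $t_0^{(i)}$ and rates can be taken uniform --- this is precisely why the choice \eqref{eq;bprime} of $\mathbf b_0$ is essential, since it equalizes the unstable expansion rates across factors so that one common $t_0$ and one common contraction constant suffice. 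A secondary technical point is the large-deviations bookkeeping: the set $D_{\delta'}(F^+_{\mathbf b_0},R,T)$ only asks for a $\delta'$-fraction of times in the cusp rather than the trajectory being eventually in the cusp, so I would need the ``average escape'' version of the KKLM estimate (their $\delta$-escape-on-average bound), in which the exponent $\delta'\kappa$ appears with the extra combinatorial factor $\binom{N}{\delta' N}$ absorbed by choosing $t_0$ large enough that $c^{1}\binom{N}{\delta'N}^{1/N}<1$; this is routine once the contraction statement is in hand but must be stated carefully. Everything else --- converting measure bounds to covering numbers on the Euclidean box $\bM \cong \mathbb R^{\sum m_in_i}$, and passing from a fixed scale $e^{-\lambda T}$ to Hausdorff dimension via the limsup --- is standard.
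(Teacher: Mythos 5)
Your proposal takes a genuinely different route from the paper and, as written, has a gap at the key point. The paper does \emph{not} attempt to re-run the KKLM height-function machinery on the product space. Instead it invokes the single-factor KKLM covering theorem (Theorem~\ref{t-KKLM}, via the continuous-time Corollary~\ref{c-KKLM}) as a black box on each factor, and the new ingredient is Lemma~\ref{l-easy}: since the cusp of $X=\prod_i X_i$ is the \emph{union} of the per-factor cusps, a point of $D_{\delta}(F^+_{\bb_0},R,T)$ must, for some partition $\delta_1+\cdots+\delta_s\approx\delta$, spend a $\delta_i$-fraction of time in the cusp of $X_i$ for each $i$; compactness of the simplex reduces this to a finite union of product sets $\prod_i D_{\delta_i}(F^+_i,R,b_iT)$. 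Applying Corollary~\ref{c-KKLM} to each factor gives covering exponent $\sum_i(m_in_i-\delta_ib_i)$, and the inequality $\sum_i\delta_ib_i\ge\bigl(\sum_i\delta_i\bigr)\min_ib_i$ produces the $\min$.

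Your proposal suppresses exactly this structural step. You argue that a combined height function $\Delta=\sum_i\Delta_i$ contracts under $F^+_{\bb_0}$, and that the resulting exponent $\kappa=\min_ib_i$ ``comes from comparing the contraction rate $-\log c$ per step to the expansion rate.'' That attribution is not correct. The contraction constant $c$ of a KKLM-type height function is a tunable parameter of the construction and does not directly encode $\frac{m_in_i}{m_i+n_i}$; moreover, a contraction constant controls a \emph{measure} bound, and passing from a measure bound to the covering bound stated in Theorem~\ref{t-KKLM} is precisely the nontrivial content of the KKLM argument (it depends on the tree structure of refining partitions of the $U$-box, not just on $c$). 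You wave this away as ``converting measure bound to covering bound,'' but it is not routine. More importantly, even granting a covering theorem for the product, the reason the exponent is $\min_i b_i$ (rather than, say, $\sum_ib_i$ or some spectral quantity attached to $\Delta$) is that at each cusp excursion the bad set may ``choose'' the factor with the smallest penalty $b_i$; without an explicit bookkeeping of which factor is in its cusp at which times (the role of Lemma~\ref{l-easy} and the simplex $\cS$), a global contraction argument does not by itself isolate $\min_ib_i$. In short: the high-level idea of running a KKLM-style argument on the product is viable, but your proposal misidentifies where $\min_ib_i$ comes from and omits the factor-decomposition that the paper uses, so the crucial exponent is asserted rather than derived.
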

The proof  of Lemma \ref{l-special} will be given in Section \ref{sec;spcial}.
Since the right hand side of (\ref{eq;clear}) does not depend on $\ba\in \RR^s_+$, Lemma
\ref{l-special} also implies
\begin{equation*}
\dim \left (\bigcup_{\ba\in \RR_+^s} D_{\delta}(F^+_\ba, \bM)\right)\le \sum_{i=1}^s m_in_i-\delta\min_{1\le i\le s} \frac{m_in_i}{m_i+n_i}.
\end{equation*}

\subsection{Proof of Lemma \ref{l-inc}}\label{sec;key}
The proof of Lemma \ref{l-inc} is based on the the following key lemma.
\begin{lemma}\label{l-key}
Let $s\in \NN$, $1=\sigma_1\ge \sigma_2\ge \ldots \sigma_s>0$ and $f_1, f_2, \ldots f_s: \RR_+ \rightarrow [0, \infty)$ be bounded functions. Then for any $\epsilon>0$ and $t_0>0$, there exists $t\ge t_0$ such that
\begin{equation}\label{e-ineq}
  \sum_{i=1}^s f_i(t)\le \epsilon +\sum_{i=1}^s f_i(\sigma_i t).
\end{equation}
\end{lemma}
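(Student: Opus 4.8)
The plan is to argue by contradiction, exploiting the scaling structure of the inequality through a passage to logarithmic coordinates and an averaging over a box in the lattice of ``exponents''. Suppose the conclusion fails, so there are $\epsilon>0$ and $t_0>0$ such that
\[
\sum_{i=1}^s f_i(t) > \epsilon + \sum_{i=1}^s f_i(\sigma_i t)\qquad\text{for every }t\ge t_0 .
\]
Put $a_i=\log(1/\sigma_i)$, so that $a_1=0$ and $a_i\ge 0$ for all $i$ because $\sigma_i\le 1$; set $g_i(x)=f_i(e^x)$ for $x\in\RR$, and let $M=\max_{1\le i\le s}\sup_{\RR_+}f_i<\infty$. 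We may assume $M>0$, the case $M=0$ being trivial. In these coordinates the assumed inequality becomes $Q(x)>\epsilon$ for all $x\ge x_0:=\log t_0$, where $Q(x):=\sum_{i=1}^s\bigl(g_i(x)-g_i(x-a_i)\bigr)$.

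Next I would fix an integer $N>sM/\epsilon$ and average $Q$ over the box of lattice points $x_0+\sum_{i=1}^s p_i a_i$ indexed by $\mathbf p=(p_1,\dots,p_s)\in\{1,\dots,N\}^s$, i.e. consider
\[
A:=\frac{1}{N^s}\sum_{\mathbf p\in\{1,\dots,N\}^s}Q\Bigl(x_0+\sum_{i=1}^s p_i a_i\Bigr).
\]
The key observation is that $A$ is small. Grouping the sum defining $A$ according to the index $i$, and for each $i$ performing the summation over $p_i\in\{1,\dots,N\}$ first with the remaining coordinates fixed, the inner sum telescopes: the two terms $g_i\bigl(x_0+\sum_j p_j a_j\bigr)$ and $g_i\bigl(x_0+\sum_j p_j a_j-a_i\bigr)$ differ only in that $p_i$ is replaced by $p_i-1$, so the sum over $p_i$ collapses to a single difference of the form $g_i(\,\cdot\,+N a_i)-g_i(\,\cdot\,)$, whose absolute value is at most $M$. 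Summing over the $N^{s-1}$ choices of the other coordinates and over $i$, one obtains $|A|\le sM/N<\epsilon$. Consequently some lattice point $\tilde x=x_0+\sum_{i=1}^s p_i a_i$ satisfies $Q(\tilde x)<\epsilon$; since $p_i\ge 1$ and $a_i\ge 0$ we have $\tilde x\ge x_0$, so $t:=e^{\tilde x}=t_0\prod_{i=1}^s\sigma_i^{-p_i}\ge t_0$, and $Q(\tilde x)<\epsilon$ unwinds to $\sum_{i=1}^s f_i(t)<\epsilon+\sum_{i=1}^s f_i(\sigma_i t)$, contradicting our assumption.

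I do not anticipate a real obstacle; the one idea needed is this telescoping-averaging over a full $s$-dimensional box. It is worth emphasizing why a single geometric progression $t_0\rho^{-k}$ does not suffice: that would force all the $\sigma_i$ to be integer powers of one common $\rho$, whereas a priori the $\sigma_i$ are multiplicatively independent. Averaging over the box avoids this, because the telescoping is carried out one coordinate at a time and never requires the $a_i$ to be commensurable. Finally, note that only boundedness of the $f_i$ is used — through the uniform bound $M$ on the telescoped differences — with no measurability or regularity needed, since $Q$ is evaluated only at the finitely many lattice points of the box; and the hypothesis $\sigma_i\le 1$ enters solely to guarantee $t\ge t_0$.
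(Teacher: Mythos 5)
Your proof is correct, and it takes a genuinely different route from the paper's. The paper argues by induction on $s$: at the inductive step it picks $Q$ with $f_s\le Q\epsilon$, replaces each $f_i$ for $i<s$ by the partial sum $g_i(t)=\sum_{q=0}^Q f_i(\sigma_s^{-q}t)$, applies the induction hypothesis to $(g_1,\dots,g_{s-1})$, and then pigeonholes over $q\in\{0,\dots,Q\}$, controlling the $f_s$-contribution by a one-dimensional geometric telescoping along the orbit $t\mapsto\sigma_s^{-1}t$. Your argument is non-inductive: passing to logarithmic coordinates you average the defect $Q(x)=\sum_i\bigl(g_i(x)-g_i(x-a_i)\bigr)$ over an entire $s$-dimensional box of points $x_0+\sum_i p_i a_i$, $\mathbf p\in\{1,\dots,N\}^s$, and observe that fixing all but the $i$-th coordinate and summing over $p_i$ telescopes that index's contribution to a single bounded difference $g_i(\cdot+Na_i)-g_i(\cdot)$, yielding $|A|\le sM/N<\epsilon$ and hence a contradiction with $Q>\epsilon$ on $[x_0,\infty)$. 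In effect you have unrolled the paper's induction into a single box average; this is more symmetric and makes it explicit that nothing like commensurability of the $\sigma_i$ is needed, at the mild cost of the multi-index bookkeeping and the change of variables, whereas the paper's inductive form keeps each step one-dimensional and constructs the witness $t$ directly rather than by contradiction. Both proofs rest on exactly the same inputs — boundedness of the $f_i$ to bound the telescoped endpoints, and $\sigma_i\le1$ to ensure the chosen $t$ lies above $t_0$.
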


\begin{proof}
	
  We argue by induction on $s$. For $s=1$, since $\sigma_1=1$, the inequality \eqref{e-ineq} is trivial. Suppose $s\ge 2$ and the lemma holds for $s-1$. Let $\epsilon>0$ and $t_0>0$. By assumption, the function $f_s$ is bounded, hence there exists $Q\in \NN$ such that $f_s(x)\le Q\epsilon$ for all $x\in \RR_+$.

  Next  we consider the bounded functions $g_1,\ldots , g_{s-1}: \RR_+ \rightarrow [0, \infty)$ defined as
  \begin{equation}\label{eq;plugin}
    g_i(t)=\sum_{q=0}^Q f_i(\sigma_s^{-q}t),  \quad 1\le i\le s-1.
  \end{equation}
  By the induction hypothesis, there exists $t_1\ge t_0$ such that
  \begin{equation}\label{eq;induction}
    \sum_{i=1}^{s-1}g_i(t_1)\le \epsilon+  \sum_{i=1}^{s-1}g_i(\sigma_i t_1).
  \end{equation}

We claim that
\begin{align}\label{eq;key}
\sum_{q=0}^Q \left(\epsilon+\sum_{i=1}^s f_i(\sigma_i \sigma_s^{-q}t_1)- \sum_{i=1}^s f_i( \sigma_s^{-q}t_1)\right)\ge 0.
\end{align}
 Summing
the index $q$ first and using (\ref{eq;plugin}) for $1\le i\le s-1$, we have  the left hand side of (\ref{eq;key}) is equal to
  \begin{align}
  \notag
     &(Q+1)\epsilon+  \sum_{i=1}^{s-1} g_i(\sigma_i t_1) + \sum_{q=0}^{Q} f_s(\sigma_s^{-q+1} t_1)- \sum_{i=1}^{s-1} g_i( t_1)-\sum_{q=0}^{Q} f_s(\sigma_s^{-q} t_1) \\
     =&\Big(Q\epsilon+f_s(\sigma_s t_1)-f_s(\sigma_s^{-Q} t_1)\Big)+\Big (\epsilon+\sum_{i=1}^{s-1} g_i(\sigma_i t_1)-\sum_{i=1}^{s-1} g_i( t_1)\Big).
     \label{eq;more}
  \end{align}
  The first term of (\ref{eq;more}) is nonnegative since $0\le f_s(x)\le Q\epsilon$
  for all $x\in \RR_+$. The second term of (\ref{eq;more}) is nonnegative by (\ref{eq;induction}). Therefore, (\ref{eq;key}) holds.

By (\ref{eq;key}),  there exists $0\le q\le Q$ such that
  \begin{equation*}
    \epsilon+\sum_{i=1}^s f_i(\sigma_i \sigma_s^{-q}t_1)- \sum_{i=1}^s f_i( \sigma_s^{-q}t_1)\ge 0.
  \end{equation*}
  This implies that $t=\sigma^{-q}_st_1$ satisfies \eqref{e-ineq}.
  Note that $t\ge t_1$, since $\sigma_s\le 1$.
   This  completes the proof.
\end{proof}

\begin{proof}[Proof of Lemma \ref{l-inc}]
Assume the contrary that,  there exists
 $$\bTheta\in \bigcup_{T_1>0}\bigcap_{T>T_1}D_{\delta}(F_{\mathbf a}^+, R, T)\setminus \bigcap_{T_1>0}\bigcup_{T>T_1}D_{\delta'}(F_{\mathbf b}^+, R, T).$$
 Then there exists $ T_1>0$ such that, for any $T\ge T_1$,
\begin{equation*}
\bTheta \in D_{\delta}(F_{\ba}^+, R, T) \quad \text{ but }\quad  \bTheta \notin  D _{\delta'}(F_{\mathbf b}^+, R, T).
\end{equation*}
In view of the definition of  $D_{\delta}(F_{\ba}^+, R, T)$ in (\ref{e-def-set2}), for any $T\ge T_1$,
\begin{equation}\label{e-bound}
\cA(F_{\ba}^+, R, T, \bTheta)\ge \delta\quad  \text{ and }\quad  \cA(F_{\mathbf b}^+, R, T, \bTheta)< \delta'.
\end{equation}

Note that the right hand side of (\ref{eq;limsup}) is unchanged if we rescale
$\mathbf b$.
So by possibly rescaling $ \mathbf b=(b_1, \ldots, b_s)$ and  reordering  $1\le i\le s$ if necessary, we may assume that
\begin{equation*}
  1=\frac{b_1}{a_1}\ge \frac{b_2}{a_2}\ge \cdots \ge \frac{b_s}{a_s}>0.
\end{equation*}
Applying Lemma \ref{l-key} to the functions $f_i(t)=\cA_i(R, a_it,  \btheta_i)$, with
$$\epsilon=\frac{1}{2}(\delta-\delta'), \quad t_0=T_1 \quad \text{ and }  \quad\sigma_i=\frac{b_i}{a_i},$$
we know that  there exists $T\ge T_1$, such that
\begin{align*}
  \cA(F_{\ba}^+, R, T, \bTheta) &= \sum_{i=1}^s\cA_i( R, a_iT, \btheta_i)\\
      &\le \frac{1}{2}(\delta-\delta')+\sum_{i=1}^s\cA_i( R, b_iT, \btheta_i)\\
      &= \frac{1}{2}(\delta-\delta')+ \cA(F_{\mathbf b}^+, R, T, \bTheta).
\end{align*}
This leads to a contradiction to \eqref{e-bound}, hence completes the proof.
\end{proof}

\subsection{Proof of Lemma \ref{l-special}}\label{sec;spcial}

Let us first fix a pair of integers $(m,n)\in \NN^2$. For $r>0$, let $B_r$ denote the open Euclidean ball\footnote{In this subsection, metric balls in vector spaces are assumed to be open. } in $M_{m\times n}(\RR)$ of radius $r$ centered at $0$. The upper bound parts of Theorems \ref{t-dfsu} and \ref{T:1.4} are derived in \cite{KKLM} from the following covering theorem\footnote{Recall that the time parameter $t$ in this paper differs from that in \cite{KKLM} by a factor $mn$.}, which is the main technical result of \cite{KKLM}.

\begin{theorem}[\cite{KKLM}]\label{t-KKLM}
There exist $t_0>0$ and a function $C:Y_{m+n}\to\RR_+$ such that the following holds:
For any $t\ge t_0$, there exists a compact set $K=K(t)$ in $Y_{m+n}$ such that for any $y\in Y_{m+n}$, $\delta\in(0, 1)$ and $\ell \in \NN$, the set
$$Z_y(K,\ell,t,\delta):=\left\{\btheta\in B_1:\#\{k\in \{1,\ldots, \ell\}: g_{kt}^{(m,n)}u_{\btheta}y\notin K\}\ge \delta\ell  \right\}$$
can be covered by no more than
$C(y)(\frac{t}{mn})^{3\ell}e^{(m+n-\delta)\ell t}$ balls in $M_{m\times n}(\RR)$ of radius $e^{-\frac{m+n}{mn}\ell t}$.
\end{theorem}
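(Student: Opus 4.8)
This is the main technical theorem of \cite{KKLM}, proved there via the contraction properties of a Margulis‑type height function on the space of lattices; the plan is to reproduce that scheme. It splits into two largely independent parts: (i) a contraction estimate for a height function on $Y_{m+n}$ under the action of $g_t^{(m,n)}$ averaged over the expanding horospherical directions $u_\btheta$, and (ii) a self‑similar covering argument (a ``tree of balls'') that converts the estimate into the asserted count.

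For part (i), following Eskin--Margulis--Mozes, for a unimodular lattice $\Lambda\subset\RR^{m+n}$ and $1\le i\le m+n-1$ let $\alpha_i(\Lambda)$ be the reciprocal of the least covolume of a rank‑$i$ primitive sublattice of $\Lambda$, and set $f=\sum_i c_i\alpha_i$ for positive weights $c_i$ to be chosen. Then $f\colon Y_{m+n}\to[1,\infty)$ is proper and has bounded multiplicative distortion under the $U$‑action at unit scale; the compact set in the theorem will be a sublevel set $K=K(t):=\{f\le\rho(t)\}$ for a suitable $\rho(t)$, so that $g_{kt}^{(m,n)}u_\btheta y\notin K$ forces $f(g_{kt}^{(m,n)}u_\btheta y)>\rho(t)$. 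The key input is a contraction inequality of the form
$$\frac{1}{\Vol(B_1)}\int_{B_1}f\big(g_t^{(m,n)}u_\btheta\, y\big)\dd\btheta\ \le\ c_1(t)\,f(y)+c_2 \qquad (t\ge t_0,\ y\in Y_{m+n}),$$
where $c_1(t)\to0$ at a rate governed by the expansion rate $e^{\frac{m+n}{mn}t}$ of $U$ under $g_t^{(m,n)}$; the role of the weights $c_i$ is exactly to make all the $\alpha_i$‑pieces, which a priori contract at $i$‑dependent rates, contract at a common positive rate. One then replaces the Riemann average by a finite average over a grid in $B_1$ of spacing $\asymp e^{-\frac{m+n}{mn}t}$, at the cost of a factor polynomial in $t$ (using the bounded distortion).

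For part (ii), I would use self‑similarity: since $g_{kt}^{(m,n)}$ conjugates $u_{r_k\btheta'}$ to $u_{\btheta'}$ for $r_k:=e^{-\frac{m+n}{mn}kt}$, a radius‑$r_k$ ball of $B_1$, pushed forward by $g_{kt}^{(m,n)}$, again looks like $B_1$ based at a new point, so the dynamics controlling the $(k+1)$‑st step inside such a ball is a copy of the unit‑scale dynamics at a new base point. I would build a cover of $Z_y(K,\ell,t,\delta)$ inductively, decomposing $B_1$ level by level into grids of radius‑$r_k$ balls (each parent having $\asymp e^{(m+n)t}$ children) and attaching to a level‑$k$ ball $B$ a base point $y_B$; up to an $O(r_1)$ error harmless at unit scale, the event ``$g_{kt}^{(m,n)}u_\btheta y\notin K$ for all $\btheta\in B$'' is governed by the position of $B$ inside its parent and the parent's base point. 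Applying the discretized contraction at vertices whose base point lies in $K$ (together with Markov's inequality and the choice $\rho(t)\asymp e^{t}$), one finds that only a fraction $\asymp e^{-t}$ of the $\asymp e^{(m+n)t}$ children have an ``escaping'' step; a weighted counting that charges each ball according to the number of escaping steps among its ancestors — and uses that the height is large immediately after an escape but is driven back to scale $\rho(t)$ by the contraction within a bounded number of further steps — then shows that the $\btheta$ with at least $\delta\ell$ escaping steps are covered by at most
$$C(y)\Big(\tfrac{t}{mn}\Big)^{3\ell}e^{(m+n-\delta)\ell t}$$
balls of radius $r_\ell$, the polynomial factor collecting the per‑level discretization and $O(r_1)$‑matching losses over the $\ell$ levels, and $C(y)$ coming from $f(y)$, which bounds how deep $u_\btheta y$ can be for $\btheta\in B_1$.

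The main obstacle is the contraction estimate (i) with a uniform rate strong enough to match the claimed exponent: because $\alpha_i$ transforms under $g_t^{(m,n)}$ at an $i$‑dependent rate, one must use the combinatorics of how primitive sublattices of different ranks intersect and span — the ``interplay of intermediate dimensions'' of Eskin--Margulis--Mozes — to choose the weights $c_i$ making $f$ contract on average, uniformly in $y$; this is the real content of \cite{KKLM}. A secondary but delicate point in (ii) is handling runs of consecutive escaping steps, where the height is large and the naive counting of escaping children degrades, and organizing the whole accounting so that the accumulated loss is only $(t/mn)^{3\ell}$ with $y$‑dependence entering only through $f(y)$.
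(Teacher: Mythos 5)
The paper does not prove this statement; it is quoted (with the time reparametrization noted in the footnote) from \cite{KKLM}, so there is no in-paper argument to compare against. Your reconstruction of the \cite{KKLM} proof is faithful in outline: the Margulis/Eskin--Margulis--Mozes height function $f=\sum_i c_i\alpha_i$, the averaged contraction inequality over the expanding horospherical ball, the choice of $K(t)$ as a sublevel set of $f$, the self-similar tree-of-balls cover of $B_1$ driven by the conjugation $g_{kt}^{(m,n)}u_{\btheta}g_{-kt}^{(m,n)}=u_{e^{\frac{m+n}{mn}kt}\btheta}$, and the resulting count $e^{(m+n-\delta)\ell t}$ with the polynomial factor $\left(\frac{t}{mn}\right)^{3\ell}$ absorbing the per-level discretization and matching losses, and $C(y)$ controlled by $f(y)$. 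You also correctly identify where the real work lies: (a) choosing the weights $c_i$ so that the intermediate-rank pieces $\alpha_i$, which a priori transform under $g_t^{(m,n)}$ at $i$-dependent rates, all contract at a uniform positive rate on average over $B_1$ --- this is the systems-of-linear-forms refinement of the EMM recursion and occupies most of \cite{KKLM}; and (b) the accounting for consecutive escaping steps, where the base point's height exceeds $\rho(t)$ and the one-step Markov bound degrades. Your sketch, as you acknowledge, stops short of carrying out (a) and (b), which are precisely the technical content of the theorem; as a blind reconstruction of the cited argument it is accurate, but it is not a self-contained proof.
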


To prove Lemma \ref{l-special}, we need the following continuous-time analogue of Theorem \ref{t-KKLM}, which is in fact an easy corollary of Theorem \ref{t-KKLM}. For technical reasons, we include the $\delta=0$ case.

\begin{corollary}\label{c-KKLM}
Let $r>0$. Then there exist $T_0>0$ and a function $\tilde{C}:Y_{m+n}\to\RR_+$ such that the following holds:
For any $T\ge T_0$, there exists a compact set $\tilde{K}=\tilde{K}(T)$ in $Y_{m+n}$ such that for any $y\in Y_{m+n}$, $\delta\in [0, 1)$ and $\ell \in \NN$, the set
$$\tilde{Z}_y(r,\tilde{K},\ell T,\delta):=\left\{\btheta\in B_r:\int_{0}^{\ell T} \bone_{Y_{m+n}\setminus \tilde{K}}\left(g_t^{(m,n)}u_\btheta y\right) \dd t\ge \delta\ell T \right\}$$
can be covered by no more than
$\tilde{C}(y)(\frac{T}{mn})^{3\ell}e^{(m+n-\delta)\ell T}$ balls in $M_{m\times n}(\RR)$ of radius $e^{-\frac{m+n}{mn}\ell T}$.
\end{corollary}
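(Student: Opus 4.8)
The plan is to derive Corollary \ref{c-KKLM} from Theorem \ref{t-KKLM} by passing from discrete time to continuous time. The continuous-time integral $\int_0^{\ell T}\bone_{Y_{m+n}\setminus\tilde K}(g_t^{(m,n)}u_\btheta y)\dd t$ being large forces the discrete orbit sampled at multiples of a small time step to spend a comparable proportion of steps outside a slightly enlarged compact set; the issue is purely one of quantifying this and tracking the resulting change in radius and in the number of balls.

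First I would fix the discretization. Given $T\ge T_0$ (with $T_0$ to be chosen), write $T = N t_1$ for a suitable $N\in\NN$ and $t_1\ge t_0$ obtained from Theorem \ref{t-KKLM}; more precisely, pick $N$ large enough that $t_1 := T/N\ge t_0$, so the total length $\ell T = (\ell N) t_1$ is an integer multiple of $t_1$. Let $K = K(t_1)$ be the compact set from Theorem \ref{t-KKLM}, and set $\tilde K = \tilde K(T)$ to be a fixed compact neighborhood of $\bigcup_{0\le t\le t_1} g_t^{(m,n)}K$ (this is compact since $K$ is compact and $\{g_t^{(m,n)}:0\le t\le t_1\}$ is), enlarged slightly to absorb the continuity modulus below. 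The key observation is: if $g_{kt_1}^{(m,n)}u_\btheta y\in K$ for some $k$, then for all $t\in[kt_1,(k+1)t_1]$ we have $g_t^{(m,n)}u_\btheta y = g_{t-kt_1}^{(m,n)}(g_{kt_1}^{(m,n)}u_\btheta y)\in \bigcup_{0\le\tau\le t_1}g_\tau^{(m,n)}K\subset\tilde K$. Hence the set of $t\in[0,\ell T]$ with $g_t^{(m,n)}u_\btheta y\notin\tilde K$ is contained in the union of those intervals $[kt_1,(k+1)t_1]$ for which $g_{kt_1}^{(m,n)}u_\btheta y\notin K$. Therefore
\[
\int_0^{\ell T}\bone_{Y_{m+n}\setminus\tilde K}(g_t^{(m,n)}u_\btheta y)\dd t \le t_1\cdot\#\{k\in\{0,\ldots,\ell N-1\}: g_{kt_1}^{(m,n)}u_\btheta y\notin K\},
\]
so $\btheta\in\tilde Z_y(r,\tilde K,\ell T,\delta)$ forces the discrete count to be at least $\delta\ell T/t_1 = \delta\ell N$, i.e.\ $\btheta$ lies in (a rescaled copy of) $Z_y(K,\ell N,t_1,\delta)$.

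Next I would handle the rescaling from $B_r$ to $B_1$. Conjugating by the diagonal element $\mathrm{diag}(r^{-1/m}I_m, r^{1/n}I_n)\in\SL(m+n,\RR)$ — which commutes with $g_t^{(m,n)}$ and sends $u_\btheta$ to $u_{r^{-1}\btheta}$ — replaces $y$ by $y' := \mathrm{diag}(r^{-1/m}I_m,r^{1/n}I_n)y$ and $B_r$ by $B_1$. Applying Theorem \ref{t-KKLM} to $y'$, the set $\{r^{-1}\btheta:\btheta\in\tilde Z_y(r,\tilde K,\ell T,\delta)\}\subset Z_{y'}(K,\ell N,t_1,\delta)$ is covered by at most $C(y')(\tfrac{t_1}{mn})^{3\ell N}e^{(m+n-\delta)\ell N t_1}$ balls of radius $e^{-\frac{m+n}{mn}\ell N t_1}$; scaling back by $r$ multiplies the radii by $r$ and leaves the count unchanged. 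Since $\ell N t_1 = \ell T$, the radius becomes $r\,e^{-\frac{m+n}{mn}\ell T}$, which we absorb into $e^{-\frac{m+n}{mn}\ell T}$ by choosing $T_0$ large enough that $r\le e^{(m+n)/(mn)\,T_0}$ and then covering each ball by a bounded number of smaller ones (or simply noting that one may shrink radii at the cost of an $r$-dependent constant factor folded into $\tilde C$). Finally, $(\tfrac{t_1}{mn})^{3\ell N} = (\tfrac{T/N}{mn})^{3\ell N}$; choosing $N$ so that $T/N$ stays within a bounded range, say $t_0\le T/N\le 2t_0$, gives $N\le T/t_0$ and hence $(\tfrac{t_1}{mn})^{3\ell N}\le (\tfrac{2t_0}{mn})^{3\ell T/t_0}$, which is at most $(\tfrac{T}{mn})^{3\ell}$ once $T_0$ is large (the polynomial-in-$T$ bound dominates the fixed exponential base). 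Setting $\tilde C(y) = C(y')$ (up to the $r$-dependent constant above) completes the count.

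The main obstacle is the bookkeeping in the last paragraph: one must verify that choosing the discretization step $t_1$ in a bounded window forces $N$ to grow only linearly in $T$, and that the resulting factor $(t_1/mn)^{3\ell N}$ — whose exponent is now $3\ell N$ rather than $3\ell$ — is still dominated by the claimed $(\tfrac{T}{mn})^{3\ell}e^{(m+n-\delta)\ell T}$ after enlarging $T_0$; the exponential term $e^{(m+n-\delta)\ell T}$ is unaffected since $\ell N t_1 = \ell T$ exactly. The $\delta = 0$ case is trivial: $\tilde Z_y(r,\tilde K,\ell T,0) = B_r$, which is covered by $O((r\,e^{\frac{m+n}{mn}\ell T})^{mn})$ balls of the required radius, and this is $\le\tilde C(y)(\tfrac{T}{mn})^{3\ell}e^{(m+n)\ell T}$ for $T_0$ large. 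One subtlety worth flagging: $\tilde K(T)$ genuinely depends on $T$ through $t_1 = T/N$, but this is permitted by the statement, and by fixing the window $[t_0,2t_0]$ one could even make $\tilde K$ independent of $T$ — though that refinement is not needed for Lemma \ref{l-special}.
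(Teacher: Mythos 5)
Your argument breaks at the final bookkeeping step, and the breakdown is genuine rather than cosmetic. You discretize with step $t_1 = T/N$ chosen in a bounded window $[t_0, 2t_0]$, so $N \approx T/t_0$ grows linearly with $T$, and Theorem~\ref{t-KKLM} applied with $\ell N$ steps yields the polynomial factor $(t_1/mn)^{3\ell N}$. With $t_1\ge t_0$ and $N\ge T/(2t_0)$ this is at least $(t_0/mn)^{3\ell T/(2t_0)}$, which for fixed $\ell$ grows \emph{exponentially} in $T$ whenever $t_0 > mn$ (a case you cannot exclude; note the paper sets $T_0=\max\{t_0,mn\}$ precisely because $t_0$ may exceed $mn$). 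The target $(T/mn)^{3\ell}$ grows only polynomially in $T$, so the inequality you assert — ``$(2t_0/mn)^{3\ell T/t_0}\le(T/mn)^{3\ell}$ once $T_0$ is large, the polynomial-in-$T$ bound dominates the fixed exponential base'' — is false; it is the other way around. The correct move is the opposite of what you do: take the \emph{coarsest} discretization, $N=1$ and $t_1 = T$, applying Theorem~\ref{t-KKLM} with $t=T$ and $\ell$ steps. Your own reduction (continuous escape from $\tilde K$ forces discrete escape from $K$ at multiples of $t_1$) works equally well with $t_1=T$, and then the factor $(T/mn)^{3\ell}$ and the radius $e^{-\frac{m+n}{mn}\ell T}$ come out exactly as claimed. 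This is what the paper does.

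There is also a secondary flaw in the rescaling. The element $\mathrm{diag}(r^{-1/m}I_m, r^{1/n}I_n)$ conjugates $u_\btheta$ to $u_{r^{-(m+n)/(mn)}\btheta}$, not $u_{r^{-1}\btheta}$; the element that scales $\btheta$ by $r^{-1}$ is $\mathrm{diag}(r^{-n/(m+n)}I_m, r^{m/(m+n)}I_n)$. More seriously, conjugating by $d$ replaces $\tilde K$ with $d\tilde K$ in the transformed coordinates, whereas Theorem~\ref{t-KKLM} hands you a fixed compact set $K(t)$; reconciling these requires extra work that the conjugation argument glosses over. The paper avoids all of this by covering $B_r$ with finitely many $\btheta_i$-translates of the unit ball and using that a translate in $\btheta$ corresponds to replacing $y$ by $u_{\btheta_i}y$, which leaves $K$, the radii, and the geometry of the cover untouched; $\tilde C(y)$ is then a finite sum $\sum_i\max\{C(u_{\btheta_i}y), C_0\}$, with $C_0$ absorbing the trivial $\delta=0$ case.
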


\begin{proof}
Let $\btheta_1,\ldots,\btheta_q\in M_{m\times n}(\RR)$ be such that the $q$ unit balls with centers $\btheta_1,\ldots,\btheta_q$ cover $B_r$, and let $C_0>0$ be such that for any $\rho\in(0,1)$, a unit ball in $M_{m\times n}(\RR)$ can be covered by at most $C_0\rho^{-mn}$ balls of radius $\rho$. We claim that
\begin{align*}
T_0&=\max\{t_0,mn\},\\
\tilde{C}(y)&=\sum_{i=1}^q\max\{C(u_{\btheta_i}y),C_0\}, \qquad y\in Y_{m+n},\\
\tilde{K}(T)&=\bigcup_{t\in[0,T]}g_{-t}^{(m,n)}(K(T)), \qquad T\ge T_0
\end{align*}
satisfy the requirement, where $t_0$, $C(\cdot)$ and $K(\cdot)$ are given as in Theorem \ref{t-KKLM}.
Let $T\ge T_0$, $y\in Y_{m+n}$, $\delta\in [0, 1)$,  $\ell \in \NN$. We need to verify that $\tilde{Z}_y(r,\tilde{K}(T),\ell T,\delta)$ can be covered by at most
$\tilde{C}(y)(\frac{T}{mn})^{3\ell}e^{(m+n-\delta)\ell T}$ balls of radius $e^{-\frac{m+n}{mn}\ell T}$.

(1) Suppose $\delta=0$. Then $\tilde{Z}_y(r,\tilde{K}(T),\ell T,\delta)=B_r$, which can be covered by
$$qC_0(e^{-\frac{m+n}{mn}\ell T})^{-mn}\le \tilde{C}(y)(T/mn)^{3\ell}e^{(m+n)\ell T}$$
balls of radius $e^{-\frac{m+n}{mn}\ell T}$.

(2) Suppose $\delta\in (0, 1)$. The definition of $\tilde{K}(T)$ implies that for $x\in Y_{m+n}$ and $k\in\NN$, we have
$$g_{kT}^{(m,n)}x\in K(T) \quad \Longrightarrow \quad g_t^{(m,n)}x\in \tilde{K}(T) \text{ for all } t\in[(k-1)T,kT].$$
It follows that
$$\tilde{Z}_{y'}(1,\tilde{K}(T),\ell T,\delta)\subset Z_{y'}(K(T),\ell,T,\delta)$$
for any $y'\in Y_{m+n}$. Thus, by the choices of $\btheta_1,\ldots,\btheta_q$, we have
\begin{align*}
\tilde{Z}_y(r,\tilde{K}(T),\ell T,\delta)&\subset\bigcup_{i=1}^q\big(\tilde{Z}_{u_{\btheta_i}y}(1,\tilde{K}(T),\ell T,\delta)+\btheta_i\big)\\
&\subset\bigcup_{i=1}^q\big(Z_{u_{\btheta_i}y}(K(T),\ell, T,\delta)+\btheta_i\big).
\end{align*}
Theorem \ref{t-KKLM} implies that each $Z_{u_{\btheta_i}y}(K(T),\ell, T,\delta)$ can be covered by at most $C(u_{\btheta_i}y)(\frac{T}{mn})^{3\ell}e^{(m+n-\delta)\ell T}$ balls of radius $e^{-\frac{m+n}{mn}\ell T}$. Therefore, $\tilde{Z}_y(r,\tilde{K}(T),\ell T,\delta)$ can be covered by at most
$$\sum_{i=1}^qC(u_{\btheta_i}y)(T/mn)^{3\ell}e^{(m+n-\delta)\ell T}\le\tilde{C}(y)(T/mn)^{3\ell}e^{(m+n-\delta)\ell T}$$ balls of the same radius. This completes the verification.
\end{proof}

Let us now return to the context of Lemma \ref{l-special}. We will deduce from Corollary \ref{c-KKLM} a covering result for product spaces.
For simplicity, we write
\begin{equation}\label{e:bi}
b_i=\frac{m_in_i}{m_i+n_i}, \qquad 1\le i\le s.
\end{equation}
Then $\bb_0=(b_1,\ldots,b_s)$, see \eqref{eq;bprime}.
Without loss of generality, assume that
\begin{equation}\label{e:b1}
b_1=\min_{1\le i\le s} b_i.
\end{equation}
Also, for $1\le i\le s$ and $\delta_i\in[0,1)$, denote
$$D_{\delta_i}(F_i^+, R, T)=\left\{\btheta\in M_i: \cA_i( R, T, \btheta)\ge \delta_i \right\}.$$
We first prove the following simple lemma, which approximates $D_{\delta}(F_{\bb_0}^+, R, T)$
by a finite union of product sets.

\begin{lemma}\label{l-easy}
Let $\delta\in (0,1]$, $\epsilon\in (0,\delta)$. Then there exists a finite subset $\cS=\cS(\epsilon)$ of $[0,1)^s$  satisfying the following conditions:
\begin{itemize}
  \item[(1)] For any $(\delta_1, \ldots, \delta_s)\in \cS$,
  $\sum_{i=1}^s \delta_i= \delta-\epsilon.$
  \item[(2)] For any $R,T>0$,
$$          D_{\delta}(F_{ \bb_0}^+, R, T)\subset \bigcup_{(\delta_i)\in \cS}\prod_{i=1}^s D_{\delta_i}(F_i^+, R,  b_i  T).$$
\end{itemize}
\end{lemma}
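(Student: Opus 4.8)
The plan is to round the continuous quantities $\cA_i(R,b_iT,\btheta_i)$ onto a fixed finite grid \emph{before} distributing the ``budget'' $\delta$ among the $s$ factors; this rounding is precisely what forces the threshold to drop from $\delta$ to $\delta-\epsilon$, and it makes $\cS$ finite essentially for free.

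Concretely, I would first put $N=\lceil s/\epsilon\rceil$, $\eta=1/N$, and $G_\eta=\{0,\eta,2\eta,\dots,(N-1)\eta\}\subset[0,1)$. For $\alpha\in[0,1]$ define the rounding $\lfloor\alpha\rfloor_\eta=\min\{\eta\lfloor\alpha/\eta\rfloor,(N-1)\eta\}\in G_\eta$; a one-line check (the only borderline case being $\alpha=1$, handled by the $\min$) gives $0\le\alpha-\lfloor\alpha\rfloor_\eta\le\eta$. Since $\delta-\epsilon>0$, for every $\beta=(\beta_1,\dots,\beta_s)\in G_\eta^s$ with $\sum_{i=1}^s\beta_i\ge\delta-\epsilon$ (in particular $\sum_j\beta_j>0$) I would set $r(\beta):=\bigl(\tfrac{(\delta-\epsilon)\beta_1}{\sum_j\beta_j},\dots,\tfrac{(\delta-\epsilon)\beta_s}{\sum_j\beta_j}\bigr)$, whose $i$-th entry lies in $[0,\beta_i]\subset[0,1)$ and whose entries sum to $\delta-\epsilon$. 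Then I would define
$$\cS=\cS(\epsilon):=\Bigl\{\,r(\beta):\beta\in G_\eta^s,\ \textstyle\sum_{i=1}^s\beta_i\ge\delta-\epsilon\,\Bigr\}\subset[0,1)^s,$$
which is finite since $G_\eta$ is finite, and for which condition (1) holds by construction.

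For condition (2), fix $R,T>0$ and $\bTheta=(\btheta_1,\dots,\btheta_s)\in D_\delta(F^+_{\bb_0},R,T)$. Each $\alpha_i:=\cA_i(R,b_iT,\btheta_i)$ lies in $[0,1]$, being the average over $[0,b_iT]$ of a $\{0,1\}$-valued indicator function, and by the definition of $D_\delta(F^+_{\bb_0},R,T)$ together with $\cA(F^+_{\bb_0},R,T,\bTheta)=\sum_{i=1}^s\cA_i(R,b_iT,\btheta_i)$ we have $\sum_{i=1}^s\alpha_i\ge\delta$. Put $\beta_i:=\lfloor\alpha_i\rfloor_\eta$; then $\beta\in G_\eta^s$ and $\sum_i\beta_i\ge\sum_i\alpha_i-s\eta\ge\delta-s\eta\ge\delta-\epsilon$, so $\beta$ indexes an element $(\delta_1,\dots,\delta_s)=r(\beta)\in\cS$. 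Since $\delta_i\le\beta_i\le\alpha_i=\cA_i(R,b_iT,\btheta_i)$, we get $\btheta_i\in D_{\delta_i}(F^+_i,R,b_iT)$ for every $i$, hence $\bTheta\in\prod_{i=1}^sD_{\delta_i}(F^+_i,R,b_iT)$, which gives the desired inclusion.

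There is no genuine obstacle here: the lemma is a purely elementary rounding-and-distributing statement. The only two points that need a moment's attention are the finiteness of $\cS$ — secured exactly by the preliminary rounding onto the fixed grid $G_\eta$, which is what ``costs'' the slack $\epsilon$ — and keeping all thresholds strictly below $1$ even when some $\cA_i$ equals $1$, which is why $\lfloor\cdot\rfloor_\eta$ caps its value at $(N-1)\eta$.
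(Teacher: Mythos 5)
Your proof is correct and takes a genuinely different route from the paper's. The paper obtains $\cS$ by a compactness argument: it covers the compact simplex $\Sigma_\delta=\{(\delta'_i)\in[0,\delta]^s:\sum_i\delta'_i=\delta\}$ by the relatively open sets $N_\bv=\{(\delta'_i)\in\Sigma_\delta:\delta'_i>\delta_i\text{ for all }i\}$, indexed by $\bv=(\delta_i)\in\Sigma_{\delta-\epsilon}$, extracts a finite subcover $\cS$, and then deduces (2) from the monotonicity $D_{\delta'_i}(F_i^+,R,b_iT)\subset D_{\delta_i}(F_i^+,R,b_iT)$ whenever $\delta'_i>\delta_i$. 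You instead build $\cS$ entirely explicitly, by rounding the numbers $\cA_i(R,b_iT,\btheta_i)$ down to a fixed $\eta$-grid with $s\eta\le\epsilon$ and then rescaling onto the affine slice $\sum_i\delta_i=\delta-\epsilon$; the slack $\epsilon$ pays exactly for the rounding error $s\eta$. Both arguments are short, but yours is fully constructive and avoids any appeal to compactness. It also quietly sidesteps a small boundary wrinkle in the paper's cover: a point of $\Sigma_\delta$ with some coordinate equal to zero lies in no $N_\bv$ (that would require some $\delta_i<0$), so the sets $N_\bv$ with $\bv\in\Sigma_{\delta-\epsilon}$ do not literally cover all of $\Sigma_\delta$ as written; this is easily repaired, but your grid construction has no such issue. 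Your attention to the cap $\min\{\cdot,(N-1)\eta\}$ (needed because $\cA_i$ can equal $1$ while $\cS$ must lie in $[0,1)^s$) and to the positivity of $\sum_j\beta_j$ when defining $r(\beta)$ are exactly the right points of care, and both are handled correctly.
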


\begin{proof}
For $a\in(0,1]$, consider the simplex
$$\Sigma_a:=\Big\{(\delta_1,\ldots, \delta_s)\in [0, a]^s: \sum_{i=1}^s \delta_i=a\Big \}.$$
Moreover, for $\bv =(\delta_1, \ldots, \delta_s)\in \Sigma_{\delta-\epsilon}$, denote
$$N_\bv:=\{ (\delta'_1, \ldots,\delta'_s)\in \Sigma_\delta:\delta_i'>\delta_i \text{ for all } 1\le i\le s  \}.$$
Then $\{N_\bv:\bv\in \Sigma_{\delta-\epsilon}\}$ is an open cover of $\Sigma_\delta$. Since $\Sigma_\delta$ is compact, there is a finite subset $\cS$ of $\Sigma_{\delta-\epsilon}$ such that $\Sigma_\delta=\bigcup_{\bv\in\cS}N_\bv$. Note that for $\bv =(\delta_1, \ldots, \delta_s)\in\cS$ and $(\delta'_1, \ldots,\delta'_s)\in N_\bv$, we have
$$D_{\delta'_i}(F_i^+, R,  b_i  T)\subset D_{\delta_i}(F_i^+, R,  b_i  T), \qquad 1\le i\le s.$$
It follows that
\begin{align*}
D_{\delta}(F_{\bb_0}^+,R,T)&\subset\bigcup_{(\delta'_i)\in \Sigma_\delta}\prod_{i=1}^s D_{\delta'_i}(F_i^+,R,b_iT)\\
&=\bigcup_{\bv\in\cS}\bigcup_{(\delta'_i)\in N_\bv}\prod_{i=1}^s D_{\delta'_i}(F_i^+,R,b_iT)\\
&\subset\bigcup_{(\delta_i)\in\cS}\prod_{i=1}^s D_{\delta_i}(F_i^+,R,b_iT).
\end{align*}
It is clear that $\cS\subset[0,1)^s$. So the proof is completed.
\end{proof}

For $1\le i\le s$, let $\mathrm d_i$ denote the Euclidean metric on $M_i$, and $B_r^{M_i}\subset M_i$ denote the Euclidean ball of radius $r$ centered at $0$. Consider the metric
$$\mathrm d((\btheta_1,\ldots, \btheta_s),(\btheta'_1,\ldots, \btheta'_s))=\max_{1\le i\le s}\mathrm d_i(\btheta_i,\btheta'_i)$$
on $\bM$, and let $B_r^{\bM}=B_r^{M_1}\times\cdots\times B_r^{M_s}$ be the associated metric ball of radius $r$ centered at $0$.
For simplicity, we write the right hand side of \eqref{eq;typo} as $\alpha$, that is,
\begin{align*}
\alpha=\Big(\sum_{i=1}^s m_in_i\Big )-\delta b_1.
\end{align*}
Our covering result for product spaces is as follows.

\begin{lemma}\label{l-cover}
For any $r, \epsilon>0$, there exist $T=T(r, \epsilon)>0$ and $R=R(T)>0$ such that for any $\ell\in \NN$, the set
\begin{equation}\label{e-set-cover}
  D_{\delta}(F_{ \bb_0}^+, R, \ell T)\cap  B_r^{\bM}
\end{equation}
can be covered  by no more than $e^{(\alpha+\epsilon)\ell T}$  balls of radius $e^{-\ell T}$.
\end{lemma}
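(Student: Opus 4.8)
The plan is to combine Lemma~\ref{l-easy}, which reduces covering $D_\delta(F_{\bb_0}^+,R,\ell T)$ to covering finitely many product sets $\prod_{i=1}^sD_{\delta_i}(F_i^+,R,b_i\ell T)$ with $\sum_i\delta_i=\delta-\epsilon'$, with the single-component covering estimate of Corollary~\ref{c-KKLM} applied to each factor $X_i=Y_{m_i+n_i}$. The point that makes the two fit together is the identity $\tfrac{m_i+n_i}{m_in_i}b_i=1$: for the weight $\bb_0$ the $i$-th factor is driven by the flow $g_{i,t}$ over the interval $[0,b_i\ell T]$, so the covering balls that Corollary~\ref{c-KKLM} produces, of radius $e^{-\frac{m_i+n_i}{m_in_i}b_i\ell T}$, \emph{all have the same radius $e^{-\ell T}$ regardless of $i$}. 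This is exactly the statement that $(F^+_{\bb_0},X)$ has a single positive Lyapunov exponent, and it is what allows the per-factor coverings to be multiplied together: in the max-metric on $\bM$, a product of balls of radius $e^{-\ell T}$ is again a ball of radius $e^{-\ell T}$. (For a general weight $\ba$ the radii coming from distinct factors would be incommensurable, which is precisely why Lemmas~\ref{l-inc}--\ref{l-key} were used to pass to $\bb_0$ first.)

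In detail, given $r,\epsilon>0$ I first choose an auxiliary $\epsilon'\in(0,\delta)$ with $b_1\epsilon'<\epsilon$ (e.g.\ $\epsilon'=\tfrac12\min\{\delta,\epsilon/b_1\}$), and let $\cS=\cS(\epsilon')\subset[0,1)^s$ be the finite set given by Lemma~\ref{l-easy}. For each $1\le i\le s$ I apply Corollary~\ref{c-KKLM} to $Y_{m_i+n_i}$ with radius $r$, obtaining a threshold $T_{0,i}$, a function $\tilde C_i:Y_{m_i+n_i}\to\RR_+$, and, for each admissible time, a compact set. I then pick $T=T(r,\epsilon)$ large (several lower bounds on $T$ will be imposed), in particular with $T\ge T_{0,i}/b_i$ for all $i$ so that $b_iT$ is an admissible time parameter for the $i$-th factor; write $\tilde K_i:=\tilde K_i(b_iT)\subset X_i$ for the corresponding compact set. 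Finally I choose $R=R(T)$ large enough that $\tilde K_i\subset B^{X_i}_R$ for every $i$, which forces $E^{X_i}_R\subset X_i\setminus\tilde K_i$, hence $\bone_{E^{X_i}_R}\le\bone_{X_i\setminus\tilde K_i}$. Since $x_{\btheta_i}=u_{\btheta_i}[1_{G_i}]$ and $g^{(m_i,n_i)}_t=g_{i,t}$, this gives
\[D_{\delta_i}(F_i^+,R,b_i\ell T)\cap B^{M_i}_r\ \subset\ \tilde Z_{[1_{G_i}]}\big(r,\tilde K_i,b_i\ell T,\delta_i\big),\]
where the last set is $\tilde Z_y(r,\tilde K,\ell'T',\delta)$ with $y=[1_{G_i}]$, $\ell'=\ell$, $T'=b_iT$; Corollary~\ref{c-KKLM} — whose $\delta=0$ case is used for those $i$ with $\delta_i=0$ — covers it by at most $\tilde C_i([1_{G_i}])\,(b_iT/m_in_i)^{3\ell}\,e^{(m_i+n_i-\delta_i)b_i\ell T}$ Euclidean balls in $M_i$ of radius $e^{-\frac{m_i+n_i}{m_in_i}b_i\ell T}=e^{-\ell T}$.

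Taking products over $i$ and summing over $\cS$: by Lemma~\ref{l-easy}(2) and $B^{\bM}_r=\prod_iB^{M_i}_r$, the set \eqref{e-set-cover} lies in $\bigcup_{(\delta_i)\in\cS}\prod_i\big(D_{\delta_i}(F_i^+,R,b_i\ell T)\cap B^{M_i}_r\big)$, so it is covered by at most
\[|\cS(\epsilon')|\,\Big(\prod_{i=1}^s\tilde C_i([1_{G_i}])\Big)\Big(\prod_{i=1}^s(b_iT/m_in_i)^{3}\Big)^{\ell}\,e^{\ell T\sum_i(m_i+n_i-\delta_i)b_i}\]
balls of radius $e^{-\ell T}$. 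Now $(m_i+n_i)b_i=m_in_i$, and $\sum_i\delta_ib_i\ge b_1\sum_i\delta_i=b_1(\delta-\epsilon')$ because $b_1=\min_ib_i$ and $\delta_i\ge0$; hence $\sum_i(m_i+n_i-\delta_i)b_i\le\sum_im_in_i-b_1(\delta-\epsilon')=\alpha+b_1\epsilon'$, and the count above is $\le C_1C_2^{\ell}e^{(\alpha+b_1\epsilon')\ell T}$ with $C_1=|\cS(\epsilon')|\prod_i\tilde C_i([1_{G_i}])$ a constant and $C_2=\prod_i(b_iT/m_in_i)^3$ depending on $T$ only polynomially ($\log C_2=3s\log T+O(1)$). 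Setting $2\eta:=\epsilon-b_1\epsilon'>0$ and imposing on $T$ the further requirements $\log C_1\le\eta T$ and $\log C_2\le\eta T$, one gets for every $\ell\in\NN$ that $C_1C_2^\ell\le e^{\eta\ell T}e^{\eta\ell T}=e^{2\eta\ell T}$, so the total count is $\le e^{(\alpha+b_1\epsilon'+2\eta)\ell T}=e^{(\alpha+\epsilon)\ell T}$, as required.

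The one step carrying genuine content is the observation that choosing the special weight $\bb_0$ equalizes all the KKLM covering-ball radii, so the per-factor coverings multiply together with no loss of scale or cardinality; everything else is routine — matching the continuous-time covering result of Corollary~\ref{c-KKLM} to the sets $D_{\delta_i}(F_i^+,R,\cdot)$ by taking $R$ large, and absorbing $C_1$, the geometric factor $C_2^\ell$, and the slack between $\epsilon$ and $b_1\epsilon'$ by taking $T$ large.
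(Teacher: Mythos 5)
Your proposal is correct and follows essentially the same route as the paper: reduce via Lemma~\ref{l-easy} to finitely many product sets, apply Corollary~\ref{c-KKLM} to each factor, observe that the weight $\bb_0$ makes $\tfrac{m_i+n_i}{m_in_i}b_i=1$ so all covering radii equal $e^{-\ell T}$, multiply the factor-wise covers, and absorb the constants and the $\cS$-cardinality by taking $T$ large. The only (inessential) differences are bookkeeping: you phrase the single-factor step as an inclusion into $\tilde Z_{[1_{G_i}]}(r,\tilde K_i,b_i\ell T,\delta_i)$ where the paper notes it equals $\tilde Z_{[1_{G_i}]}(r,B_R^{X_i},b_i\ell T,\delta_i)$, and you choose $\epsilon'=\tfrac12\min\{\delta,\epsilon/b_1\}$ where the paper writes $\epsilon/2b_1$.
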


\begin{proof}
Let $\cS(\epsilon/2b_1)\subset[0,1)^s$ be the finite set given as in Lemma \ref{l-easy}. Then the set \eqref{e-set-cover} is covered by
$$ \bigcup_{(\delta_i)\in \cS(\epsilon/2b_1)}\prod_{i=1}^s \left(D_{\delta_i}(F_i^+, R,  b_i \ell T)\cap  B_r^{M_i}\right).$$
Hence it suffices to study the sets
\begin{equation}\label{e-set-cover2}
D_{\delta_i}(F_i^+, R,  b_i \ell T)\cap  B_r^{M_i},
\end{equation}
which coincide with
$\tilde{Z}_{[1_{G_i}]}(r,B_{R}^{X_i},b_i\ell T,\delta_i)$ in the notation of Corollary \ref{c-KKLM}. By Corollary \ref{c-KKLM}, there exist $T_i, C_i>0$ such that, for any $T\ge T_i$, there exists $R_i(T)>0$ such that for any $\ell \in \NN$ and $R\ge R_i(T)$, the set \eqref{e-set-cover2} can be covered by no more than
$$C_i(b_iT/m_in_i)^{3\ell}e^{(m_i+n_i-\delta_i)b_i\ell T}\le C_iT^{3\ell} e^{(m_in_i-\delta_ib_i)\ell T}$$
balls of radius $e^{-\ell T}$.
Hence for any $T\ge \max_{1\le i\le s}T_i$ and $R \ge \max_{1\le i\le s}R_i(T)$, the set
$\prod_{i=1}^s \left(D_{\delta_i}(F_i^+, R,  b_i \ell T)\cap  B_r^{M_i}\right)$ can be covered by no more  than
\[C T^{3s\ell}e^{\sum_{i=1}^{s} (m_in_i-\delta_ib_i)\ell T}\]
balls of radius $e^{-\ell T}$, where $C=\prod_{i=1}^s C_i$. Taking $T_0$ large enough, we may assume that
$$\#\cS(\epsilon/2b_1)\cdot C T^{3s\ell}\le e^{\frac{\epsilon\ell T}{2}}$$
for any $T\ge T_0$ and $\ell\in \NN$.
On the other hand, by the choice of $\cS(\epsilon/2b_1)$, we have
\[\sum_{i=1}^{s} (m_in_i-\delta_ib_i)\le \sum_{i=1}^{s} m_in_i-\left(\sum_{i=1}^{s} \delta_i\right)b_1=\alpha+\frac{\epsilon}{2}.\]
In summary, for any $T\ge \max_{0\le i\le s} T_i$ and $R\ge \max_{1\le i\le s} R_i(T)$,
the  set \eqref{e-set-cover} can be covered by no more than
\begin{align*}
\# \cS(\epsilon/2b_1)\cdot  \max_{(\delta_i)\in \cS(\epsilon/2b_1)} C T^{3s\ell} e^{\sum_{i=1}^{s} (m_in_i-\delta_ib_i)\ell T}\le e^{(\alpha+\epsilon)\ell T}
\end{align*}
balls of radius  $e^{-\ell T}$. This proves the lemma.
\end{proof}

\begin{remark}\label{R:dfct}
The above argument does not generalize directly to prove a similar covering result for a general weight vector $\ba$. This is the main difficulty in proving Proposition \ref{p-upper-bound} and is resolved by Lemma \ref{l-inc} above.
\end{remark}

We are now prepared to prove Lemma \ref{l-special}.

\begin{proof}[Proof of Lemma \ref{l-special}]
By the definition of Hausdorff dimension, it suffices to show that for any $r>0$ and $ \sigma>\alpha$, the Hausdorff measure
\begin{equation*}
  \cH^{ \sigma}( D _\delta\cap  B_r^{\bM})=0.
\end{equation*}
Recall that, for a subset $Z\subset \bM$,
\begin{equation*}
\cH^{ \sigma}(Z)=\lim_{\beta\rightarrow 0}\cH^{ \sigma}_\beta(Z),
\end{equation*}
where
\begin{equation*}
 \cH^{ \sigma}_\beta(Z)=\inf \left\{ \sum_k |U_k|^{ \sigma}: Z\subset \bigcup_{k}U_k, |U_k|\le \beta \right\}.
\end{equation*}
Hence it suffices to show that for any $\beta>0$,
\begin{equation}\label{e-wanna-prove}
  \cH^{ \sigma}_\beta( D_\delta \cap  B_r^\bM)=0.
\end{equation}

We claim that
 for any $R>0$ and $T>0$
\begin{equation}\label{eq;rig}
   D _\delta \subset \bigcap_{\ell_1\in \NN} \bigcup_{\ell \in \NN, \ell\ge \ell_1}
    D_{\delta} (F_{ \bb_0}^+, R, \ell T).
\end{equation}
According to the definition of $D_\delta$ in  \eqref{eq;clear}, it suffices to prove that
there exists $R_1>R$ such that for any $t\in [(\ell -1)T, \ell T]$, if
$\bTheta\in  D_{\delta} (F_{ \bb_0}^+, R_1, t)$, then
$$\bTheta\in  D_{\delta} (F_{ \bb_0}^+, R, (\ell-1) T)\cup  D_{\delta} (F_{ \bb_0}^+, R, \ell T). $$
If $\bTheta\not \in  D_{\delta} (F_{ \bb_0}^+, R, (\ell-1) T)$, then
$\bTheta\not \in  D_{\delta} (F_{ \bb_0}^+, R_1, (\ell-1) T)$. The assumption
$\bTheta\in  D_{\delta} (F_{ \bb_0}^+, R_1, t)$ implies that there exists
$t_1 \in [(\ell-1)T, \ell T]$ such that $g_{ t_1 }  x_{\bTheta }\in E^X_{R_1}$.
The claim now  follows by taking $R_1 $ sufficiently large  so that,  if
$g_{t_1} x_{\bTheta} \in E_{R_1}^X$
for some $t_1\in  [(\ell -1)T, \ell T]$, then $g_{t_2}x_{\bTheta}\in  E_{R}^X$ for all
$t_2\in [(\ell -1)T, \ell T]$.

By applying Lemma \ref{l-cover} to $\epsilon=\frac{1}{2}( \sigma-\alpha)$ and $r$, we can find
$T>0$ and $R>0 $ such that for any $\ell \in \NN$, the set
\[
D_\delta(F_{\bb_0}^+, R, \ell T)\cap B_r^{\bM}
\]
can be covered by no more than $e^{(\sigma+\alpha)\ell T/2}$ balls of radius $e^{-\ell T}$.
Suppose $\ell_1$ is large enough such that $2e^{-\ell_1 T}\le \beta$.
By (\ref{eq;rig}),
\[
D_\delta \cap B_r^\bM \subset \bigcup_{\ell \in \NN, \ell\ge \ell_1}
D_{\delta} (F_{ \bb_0}^+, R, \ell T)\cap B_r^\bM.
\]
It follows that
\begin{align*}
  \cH^{ \sigma}_\beta(D_{\delta}\cap B_r^\bM)
   &\le \sum_{\ell\ge \ell_1}  \cH^{ \sigma}_{\beta}( D _{\delta}(F_{ \bb_0}^+, R, \ell T)\cap B_r^\bM) \\
   &\le \sum_{\ell \ge \ell_1} e^{(\sigma+\alpha)\ell T/2} e^{- \sigma \ell T}\\
   &=\frac{e^{-\frac{1}{2}( \sigma-\alpha)\ell_1 T}}{1-e^{-\frac{1}{2}( \sigma-\alpha) T}}.
\end{align*}
By letting $\ell_1$ go to infinity, we get  \eqref{e-wanna-prove}.
 This completes the proof.
\end{proof}

\section{The lower bounds}\label{s-lower}

This section is devoted to proving the following lower bounds in Proposition \ref{p-main}.

\begin{proposition}\label{p-lower-bound}
For  $\ba\in \RR_+^s$ and $\delta\in (0,1]$, we have
\begin{equation}\label{e-lower-bound-1}
  \dim D^e(F^+_\ba, \bM )\ge \sum_{i=1}^sm_in_i- \min_{1\le i\le s} \frac{m_in_i}{m_i+n_i},
\end{equation}
and
\begin{equation}\label{e-lower-bound-2}
 \dim D^e_{\delta}(F^+_\ba, \bM )\ge\sum_{i=1}^sm_in_i-\delta\min_{1\le i\le s} \frac{m_in_i}{m_i+n_i}.
\end{equation}
\end{proposition}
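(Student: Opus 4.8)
The plan is to exhibit inside $D^e(F^+_\ba,\bM)$ (resp.\ $D^e_\delta(F^+_\ba,\bM)$) a product set $Z=\prod_{i=1}^sZ_i$, with each $Z_i\subset M_i$ a Borel set, so that $\dim Z\ge\sum_{i=1}^s\dim Z_i$ matches the bound in \eqref{e-lower-bound-1} (resp.\ \eqref{e-lower-bound-2}) in a limit. After relabelling assume $b_1=\min_{1\le i\le s}b_i$, where $b_i=\frac{m_in_i}{m_i+n_i}$. The basic obstruction to keep in mind is Lemma~\ref{L:joint}: an essentially $\ba$-singular tuple can have no individual $\btheta_i$ singular (else $\pi_j(x_\bTheta)$ would be singular for every $j\ne i$), yet the combined excursions of the $s$ factors must still cover every scale. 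I would therefore interleave the cuspidal excursions: partition $[T_0,\infty)$ in the common parameter $t$ (recall $e^t$ plays the role of $Q$ in Lemma~\ref{L:joint}) into consecutive intervals $I_1,I_2,\dots$ of lengths $\to\infty$, colour $I_\ell$ by $c(\ell)\in\{1,\dots,s\}$ with every colour recurring infinitely often, and arrange that on (the core of) $I_\ell$ only the lattice $g^{(m_i,n_i)}_{a_it}u_{\btheta_i}\ZZ^{m_i+n_i}$ with $i=c(\ell)$ goes deep into the cusp while every other such lattice stays in a fixed compact set.

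Each $Z_i$ would be built from the variational principle of \cite{VarPrinc}, applied to the reparametrised flow $\tau\mapsto g^{(m_i,n_i)}_\tau$ with $\tau=a_it$ on $\SL(m_i+n_i,\RR)/\SL(m_i+n_i,\ZZ)$. Fix $\eta>0$ small. I would design a template $\f_i$ that is balanced (all successive minima $\asymp 0$) except on the $I_\ell$ with $c(\ell)=i$, on whose cores the bottom minimum makes a triangular excursion of depth $-h_\ell$ with $h_\ell\to\infty$; the excursions of consecutive intervals overlap slightly, so that at every large $t$ some factor is eventually at depth below $-H$ for each prescribed $H$; and the $t$-density of the $i$-th excursions equals $d_i$, where for \eqref{e-lower-bound-1} I take $d_1=1-(s-1)\eta$, $d_i=\eta$ $(i\ge2)$, so $\sum_id_i=1$, while for \eqref{e-lower-bound-2} I insert in addition all-balanced intervals of density $1-\delta$ and take $d_1=\delta-(s-1)\eta$, $d_i=\eta$ $(i\ge2)$, so $\sum_id_i=\delta$. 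Since an excursion schedule of density $d$ is exactly what realises the extremal template behind the one-factor dimension formula $m_in_i-d\,b_i$ (the matrix-tuple analogue of Theorems~\ref{t-dfsu} and \ref{T:1.4}), by the flexibility of the variational principle I may prescribe the \emph{location} of the excursions within the given density and still get $\f_i$ whose template set $Z_i$ satisfies $\dim Z_i\ge m_in_i-d_ib_i-o(1)$, the error being absorbed by the transition ramps, which are made negligible by taking cores much longer than ramps. Using $\dim(A\times B)\ge\dim A+\dim B$ for Borel sets (via Frostman measures), $\dim Z\ge\sum_i(m_in_i-d_ib_i)-o(1)$; as $\sum_id_ib_i\ge b_1\sum_id_i$ with $\sum_id_i=1$ (resp.\ $=\delta$), letting $\eta\to0$ (after the cores) gives $\dim Z\to\sum_im_in_i-b_1$ (resp.\ $\sum_im_in_i-\delta b_1$), which is \eqref{e-lower-bound-1} (resp.\ \eqref{e-lower-bound-2}).

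It then remains to check $Z\subset D^e(F^+_\ba,\bM)$ (resp.\ $D^e_\delta$) through Lemma~\ref{L:joint}(2) (resp.\ (4)). Let $\bTheta\in Z$. A vector of length $<\rho$ in the $i$-th lattice at flow time $a_it$ is essentially equivalent, up to replacing $\epsilon$ by a fixed power of $\rho$, to $e^t\in\mathcal{Q}_\epsilon(\btheta_i)^{1/a_i}$; since at every large $t$ some factor is arbitrarily deep in the cusp, $\mathcal{Q}_{\ba,\epsilon}(\bTheta)$ is a neighbourhood of $+\infty$ for every $\epsilon>0$ (resp.\ its indicator has lower time-average $\ge\delta$, the total excursion density being $\delta$). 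Conversely, fix $\epsilon_0>0$ so small that "balanced" precludes solutions of \eqref{Sing}; then on the core of any $I_\ell$ with $c(\ell)=j$ the $i$-th lattice is balanced for every $i\ne j$, so $e^t\notin\mathcal{Q}_{\epsilon_0}(\btheta_i)^{1/a_i}$ there, whence $\mathcal{Q}_{\ba,\epsilon_0,j}(\bTheta)$ omits arbitrarily large $t$ and is not a neighbourhood of $+\infty$ (resp.\ has lower time-average $\le\sum_{i\ne j}d_i<\delta$). As every colour $j$ recurs infinitely often this holds for all $j$, so $\bTheta$ is essentially $\ba$-singular (resp.\ essentially $(F^+_\ba,\delta)$-singular).

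The crux is the joint design of the templates $\f_i$: (i) each $\f_i$ must remain an admissible template for $\SL(m_i+n_i)$ under the reparametrisation $\tau=a_it$, with all slope, ordering and concatenation constraints respected across the transitions; (ii) the forced excursions and transition ramps must cost no more than $o(1)$ in $\dim Z_i$ below $m_in_i-d_ib_i$; and (iii) the interleaving must be tight enough that the union of the excursions covers a genuine neighbourhood of $+\infty$ for \eqref{e-lower-bound-1} — precisely what the naive "single singular factor" construction fails to do, and what forces the overlapping-excursion bookkeeping. Reconciling the interval lengths, excursion depths, ramp slopes and the weights $a_i$ so that (i)--(iii) hold simultaneously, and verifying that the resulting template sets carry product-compatible (Frostman) measures, is where the real work lies.
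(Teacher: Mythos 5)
Your proposal follows essentially the same route as the paper: apply the variational principle of Das--Fishman--Simmons--Urba\'nski after constructing, for each factor, a template whose cuspidal excursions are interleaved with those of the other factors, concentrating excursion density on the factor with minimal $\frac{m_in_i}{m_i+n_i}$, and then verifying essential joint singularity via the successive minima reformulation. The paper carries out the step you defer to -- the explicit construction via standard templates (Lemmas~\ref{p-lower-key-1} and~\ref{p-lower-e-key}) and the verification of admissibility and the contraction-rate computation -- but the idea, including the use of $\dim(A\times B)\ge\dim A+\dim B$ and the limiting argument in the density weights, matches.
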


The main tool of the proof is the \emph{variational principle} in parametric geometry of numbers developed by Das, Fishman, Simmons and Urba\'nski in \cite{DFSU17,VarPrinc}. It allows us to construct a set of points with a given Diophantine property,
whose Hausdorff dimension is computable. Before heading to the proof of Proposition \ref{p-lower-bound}, we first recall some basics of parametric geometry of numbers.

\subsection{Parametric geometry of numbers and variational principle}

Parametric geometry of numbers originates in a question of Schmidt \cite{SchmidtLuminy}. It was developed by Schmidt and Summerer \cite{SS09,SS13} and Roy \cite{Roy1}. Recently, Das, Fishman, Simmons and Urba\'nski \cite{DFSU17,VarPrinc} established a variational principle, which generalizes and quantifies an important theorem of Roy and has been a powerful tool for computing Hausdorff dimensions.

Let $m,n \in \NN$ and $\btheta \in M_{m\times n} (\RR)$.
The main purpose of parametric geometry of numbers is to study the trajectory $\{g_t^{(m,n)}x_\btheta: t\ge 0\}\subset Y_{m+n}$ through the \emph{successive minima function}
$$\bh = \bh_\btheta := (h_{\btheta,1}, \ldots , h_{\btheta,m+n}) : [0,\infty) \to \RR^{m+n}$$
where for $1\le k\le m+n$ and $t\ge0$,
\[ h_{\btheta,k}(t) = \log \lambda_k(g_t^{(m,n)}x_\btheta),\]
and $\lambda_k(\cdot)$ denotes the $k$-th successive minimum of a lattice in $\RR^{m+n}$.

It is easy to see that, up to a finite error, $\bh(t)$ is piecewise linear with few possible slopes. Minkowski's first and second convex body theorems give further information. In a landmark paper \cite{Roy1}, Roy showed that when $m$ or $n$ is $1$, the successive minima functions are precisely approximated by \emph{Roy-systems}, which are relatively simple combinatorial objects. In \cite{DFSU17,VarPrinc}, Das, Fishman, Simmons and Urba\'nski extended this result to arbitrary $m$ and $n$ and quantified the result.

\begin{definition}[\cite{VarPrinc}]\label{d-template}
Let $(m,n)\in \NN^2$ and $I\subset [0,\infty)$ be an interval.
An \emph{$m \times n$ template on $I$} is a piecewise linear continuous map $\bL=(L_1, \ldots, L_{m+n}) : I \to \RR^{m+n}$ satisfying the following conditions:
\begin{enumerate}
\item{$L_1 \le L_2 \le \cdots \le L_{m+n}$}.\\
\item{The derivative $L_j'(t)$, when well-defined, satisfies $-1/n\le L_j'(t) \le 1/m $.}\\
\item{For any $1\le j\le m+n$ and subinterval $J\subset I$ such that $L_j < L_{j+1}$ on $J$ (with the convention $L_{m+n+1} = +\infty$), the restriction on $J$ of the function
$F_j := \sum_{0< k \le j }L_k$ is convex with slopes in the set
\[Z(j) := \left\{\frac{k_1}{m} - \frac{k_2}{n} :  0\le k_1 \le m, 0\le k_2\le n, k_1+k_2=j\right\}.\]
}
\end{enumerate}
\end{definition}

Generalizing Roy's theorem \cite{Roy1}, it is shown in \cite{VarPrinc} that for every $\btheta \in M_{m\times n} (\RR)$, there is an $m\times n$ template $\bL$ on $[0,\infty)$ such that $\bh_\btheta-\bL$ is bounded, and conversely, for every such template $\bL$, there exists $\btheta \in M_{m\times n} (\RR)$ such that $\bh_\btheta - \bL$ is bounded.
The variational principle provides a quantitative version of the latter statement. It is expressed in terms of the \emph{lower average contraction rate} of a template, as described below.

For a template $\bL$ on $I\subset [0,\infty)$ and a subinterval $[T_1,T_2]\subset I$, one can define the \emph{average contraction rate} $\Delta(\bL, [T_1,T_2])$. As the definition is relatively long, we refer the reader to \cite[Definition 2.5]{VarPrinc}.
When $I=[0,\infty)$, we write $\Delta(\bL,T)=\Delta(\bL, [0,T])$, and define the \emph{lower average contraction rate} $\underline{\delta}(\bL)$ of $\bL$ as
\[\underline{\delta}(\bL) = \liminf_{T\to\infty} \Delta(\bL,T). \]

It is clear that the constant function $\bL=\bf{0}$ is a template, called the \emph{trivial} $m\times n$ template. Its lower average contraction rate is given below.

\begin{lemma}\label{r-trivial-dim}
The lower average contraction rate of the trivial $m\times n$ template on $[0,\infty)$ is $mn$.
\end{lemma}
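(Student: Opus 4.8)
The plan is to compute $\underline{\delta}(\mathbf{0})=\liminf_{T\to\infty}\Delta(\mathbf{0},T)$ by unwinding \cite[Definition 2.5]{VarPrinc} directly for the constant template and observing that every ingredient degenerates, leaving only an elementary dimension count. Recall that the average contraction rate $\Delta(\bL,[T_1,T_2])$ is defined there as a suitably normalized sum of local contributions, one attached to each interval of a subdivision of $[T_1,T_2]$ into maximal subintervals on which $\bL$ is affine, where each contribution is a combinatorial quantity depending only on the value and the (constant) slope of $\bL$ on that subinterval together with the structure encoded in conditions (1)--(3) of Definition \ref{d-template} (any boundary correction being bounded uniformly in $T_2-T_1$, hence negligible in the $\liminf$).

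The first step is the observation that for $\bL=\mathbf{0}$ this subdivision is trivial: $\mathbf{0}$ is affine on all of $[0,T]$, with value $(0,\dots,0)$ and slope $(0,\dots,0)$, so $\Delta(\mathbf{0},[0,T])$ reduces to the single local contribution attached to the fully degenerate configuration in which all successive minima coincide and move with zero velocity, independently of $T$. The second step is to evaluate that contribution. Because $L_j=L_{j+1}$ holds for every $j$ (the strict inequality $L_j<L_{j+1}$ never occurs), condition (3) of Definition \ref{d-template} is vacuous and imposes no constraint; and because no coordinate has negative slope, no direction of the expanding horospherical subgroup $U\cong M_{m\times n}(\RR)$ is being contracted. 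Hence the local contribution is the maximal possible value $\dim M_{m\times n}(\RR)=mn$, and therefore
\[\underline{\delta}(\mathbf{0})=\liminf_{T\to\infty}\Delta(\mathbf{0},T)=mn.\]

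The only point that requires genuine care is verifying that the bookkeeping in \cite[Definition 2.5]{VarPrinc} indeed assigns the degenerate configuration $(0,\dots,0)$ with zero slope the full value $mn$ and not some smaller quantity; this is a matter of matching that definition precisely rather than of any analytic difficulty. Alternatively, one can argue via the general bound $\underline{\delta}(\bL)\le mn$, valid for every $m\times n$ template because the contraction rate can never exceed $\dim U=mn$, together with the fact that this bound is saturated by any template with no descent — in particular by $\mathbf{0}$; in that form the argument amounts to quoting the relevant inequality from \cite{VarPrinc} and checking its equality case for the constant template.
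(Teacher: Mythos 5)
Your argument is correct and takes essentially the same approach as the paper, which simply records that the claim ``follows directly from \cite[Definition 2.5]{VarPrinc} (see also \cite[Section 28]{VarPrinc})''; like the paper, you defer to that definition, noting that the constant template has a single interval of linearity and a single equality block so that the relevant combinatorial quantity reads off as $mn$. One small caveat on the informal justification: the phrase ``no direction of $U$ is being contracted'' is a slightly misleading heuristic, since $U$ is the expanding horospherical subgroup of $g_t^{(m,n)}$ for \emph{every} template; the value $mn$ is better understood as $\dim\{\btheta : \bh_\btheta \text{ is bounded}\}$, which is full because a bounded template imposes no codimension constraint, rather than as a statement about the action of $g_t$ on $U$.
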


\begin{proof}
This follows directly from \cite[Definition 2.5]{VarPrinc} (see also \cite[Section 28]{VarPrinc}).
\end{proof}

For an $m \times n$ template $\bL$ on $[0,\infty)$, set
\[\mathcal{M}(\bL) = \{ \btheta \in M_{m \times n}(\RR) : \bh_\btheta-\bL \textrm{ is bounded}\}.\]
More generally, given a collection $\mathcal{L}$ of $m \times n$ templates on $[0,\infty)$, denote
\[\mathcal{M}(\mathcal{L}) = \bigcup_{\bL\in\mathcal{L}} \mathcal{M}(\bL) . \]
The collection $\mathcal{L}$ is said to be \emph{closed under finite perturbations} if whenever $\bL$ and $\bL'$ are templates such that $\bL\in\mathcal{L}$ and $\bL-\bL'$ is bounded then $\bL'\in\mathcal{L}$.
The variational principle reads as follows.

\begin{theorem}[\cite{VarPrinc}]\label{t-dfsu2}
Let $\mathcal{L}$ be a Borel collection of $m \times n$ templates on $[0,\infty)$ that is closed under finite perturbations. Then
$$\dim \mathcal{M}(\mathcal{L})  = \sup_{\bL\in\mathcal{L}} \underline{\delta}(\bL).$$
\end{theorem}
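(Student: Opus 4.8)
The plan is to prove the two inequalities $\dim\mathcal{M}(\mathcal{L})\le\sup_{\bL\in\mathcal{L}}\underline{\delta}(\bL)$ and $\dim\mathcal{M}(\mathcal{L})\ge\sup_{\bL\in\mathcal{L}}\underline{\delta}(\bL)$ separately, reducing each first to a statement about a single template. For the upper bound, I would first show $\dim\mathcal{M}(\bL)\le\underline{\delta}(\bL)$ for a fixed $m\times n$ template $\bL$ on $[0,\infty)$ by a covering argument on the unstable leaf $\{u_\btheta:\btheta\in M_{m\times n}(\RR)\}$. Membership of $\btheta$ in $\mathcal{M}(\bL)$ forces $\log\lambda_k(g_t^{(m,n)}x_\btheta)$ to stay within a bounded distance of $L_k(t)$ for all $k$ and all $t$, i.e.\ the rescaled lattice stays in a prescribed time-dependent box in $Y_{m+n}$. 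Partitioning $[0,T]$ into consecutive subintervals and propagating these constraints, one bounds the number of balls of a fixed small radius needed to cover $\{\btheta\in B_1:\bh_\btheta\approx\bL\text{ on }[0,T]\}$; the exponential growth rate of this covering number, interval by interval, is exactly what the average contraction rate $\Delta(\bL,[T_1,T_2])$ measures, since $g_t^{(m,n)}$ expands the $\btheta$-directions while the box constraint limits how much of that expansion survives. Letting $T\to\infty$ and taking the $\liminf$ gives the bound $\underline{\delta}(\bL)$. To pass to the Borel family $\mathcal{L}$, I would exhibit a \emph{countable} subfamily $\mathcal{L}_0$ --- templates with breakpoints and values in $\QQ$, say --- such that every $\bL\in\mathcal{L}$ is a bounded perturbation of some $\bL_0\in\mathcal{L}_0$ with $\underline{\delta}(\bL_0)\le\underline{\delta}(\bL)+\epsilon$; since $\mathcal{M}(\bL)$ depends on $\bL$ only up to bounded perturbation and $\mathcal{L}$ is closed under finite perturbations, $\mathcal{M}(\mathcal{L})$ lies in a countable union of sets $\mathcal{M}(\bL_0)$, and countable stability of Hausdorff dimension finishes the bound.

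For the lower bound it suffices to get $\dim\mathcal{M}(\bL)\ge\underline{\delta}(\bL)$ for each $\bL\in\mathcal{L}$. Fix $\epsilon>0$ and write $\delta_0=\underline{\delta}(\bL)$. I would build a Cantor-type subset of $M_{m\times n}(\RR)$ by an inductive tree construction along an increasing sequence of times $T_k$: at stage $k$ one keeps a finite family of pairwise disjoint balls so that every $\btheta$ in one of them has $\bh_\btheta$ within a fixed bounded distance of $\bL$ on $[0,T_k]$. The crucial input, a quantitative form of Roy's realization theorem from \cite{Roy1,VarPrinc}, is that each stage-$k$ ball can be split into at least $e^{(\delta_0-\epsilon)(T_{k+1}-T_k)}$ stage-$(k+1)$ balls of the appropriate radius without violating the template constraint; this branching rate is precisely what the definition of $\Delta(\bL,\cdot)$ encodes, and its availability relies on conditions (1)--(3) of Definition~\ref{d-template}, which ensure that the relevant target regions in the space of lattices are non-empty and compatible with the admissible slopes in $Z(j)$. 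Equipping the limiting Cantor set with the measure distributing mass uniformly among children and invoking the mass distribution principle then gives Hausdorff dimension at least $\delta_0-\epsilon$; letting $\epsilon\to0$ and taking the supremum over $\bL\in\mathcal{L}$ completes the proof.

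I expect the main obstacle to be the lower bound, specifically making Roy's realization theorem quantitative with no loss: one must show that the combinatorics of the successive minima --- when and how the graphs of the $L_j$ meet, and which slopes in $Z(j)$ are then available --- permit branching at each stage whose exponential rate equals the average contraction rate exactly. Intervals on which consecutive $L_j$ coincide (equal successive minima), where the induced dynamics on lattices degenerates, together with the bookkeeping needed to reconcile the discrete tree with the continuous-time definition of $\Delta(\bL,[T_1,T_2])$, are the most delicate technical points. On the upper-bound side the only real subtlety is the reduction from the uncountable Borel family to a countable one, which becomes routine once one observes that a template is determined up to bounded perturbation by its coarse combinatorial data.
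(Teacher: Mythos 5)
The paper does not prove Theorem~\ref{t-dfsu2}; it is imported verbatim from \cite{VarPrinc} (note the attribution in the theorem header) and used as a black box in the proof of Proposition~\ref{p-lower-bound}, so there is no internal proof to compare your proposal against. That said, your outline is a plausible high-level reconstruction of the Das--Fishman--Simmons--Urba\'nski strategy: an upper bound by a dynamical covering argument whose per-interval growth rate is governed by $\Delta(\bL,\cdot)$; a reduction of the Borel family $\mathcal{L}$ to a countable subfamily, exploiting closure under finite perturbations together with countable stability of Hausdorff dimension; and a lower bound via a Cantor-type tree and the mass distribution principle, with a quantitative realization theorem supplying the branching rate. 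These are indeed the ingredients of \cite{VarPrinc}. However, the sketch leans on the two facts that carry essentially all the difficulty without establishing either: that the exponential rate of the covering and branching counts equals the combinatorially defined quantity $\Delta(\bL,[T_1,T_2])$ (which you describe as ``precisely what the average contraction rate measures'' but do not derive), and that a sharp quantitative realization theorem for templates exists. You correctly flag the second as the main obstacle; the first is also far from immediate, since $\Delta$ is defined from the template itself rather than extracted from the dynamics. So the proposal is a correct map of the argument but not a route through it, and in any case the paper under review simply quotes the result.
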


\subsection{Reformulation of joint singularity properties and strategy of proof}

Let us fix an integer $s\ge 2$, a pair $(m_i, n_i)\in \NN^2$ for each $1\le i\le s$, and a weight vector $\ba=(a_1, \ldots, a_s)\in \RR_+^s$. Then we can reformulate various joint singularity properties using the successive minima function.

\begin{lemma}\label{p-smf}
Let $\bTheta=(\btheta_1, \ldots, \btheta_s)\in \bM$, $\delta\in (0,1]$.
\begin{enumerate}
  \item $\bTheta\in D(F^+_\ba, \bM)$ if and only if
\begin{equation}\label{e:smf1}
\limsup_{t\to\infty} \min_{1\le i \le s} h_{\btheta_i,1}(a_i t) =-\infty.
\end{equation}
  \item $\bTheta\in D^e(F^+_\ba, \bM)$ if and only if \eqref{e:smf1} holds and
\[\limsup_{t\to\infty}\min_{\substack{1\le i \le s\\ i\ne j}}h_{\btheta_i,1}(a_i t)> - \infty \quad \text{ for all } 1\le j \le s.\]
  \item $\bTheta\in D_{\delta}(F^+_\ba, \bM)$ if and only if
\begin{equation}\label{e:smf2}
\limsup_{T\to\infty} \frac{1}{T}\int_0^T \mathbbm{1}_{[-C, C]} \left( \min_{1\le i \le s} h_{\btheta_i,1}(a_i t)\right)  \dd t\le 1-\delta  \quad \textrm{ for all } C>0.
\end{equation}
  \item $\bTheta\in D^e_{\delta}(F^+_\ba, \bM)$ if and only if \eqref{e:smf2} holds and there exists $C>0$ such that
\[\limsup_{T\to \infty}\frac{1}{T}\int_0^T \mathbbm{1}_{[-C, C]} \left(\min_{\substack{1\le i \le s \\ i\ne j}} h_{\btheta_i,1}(a_i t)\right)\dd t> 1-\delta \quad \textrm{ for all } 1\le j \le s. \]
\end{enumerate}
 \end{lemma}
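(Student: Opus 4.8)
The plan is to convert the four dynamical conditions defining the sets \eqref{e:D1}--\eqref{e:D4} into conditions on the functions $t\mapsto h_{\btheta_i,1}(a_it)$ by means of Mahler's compactness criterion, in the same spirit as the proof of Lemma \ref{L:joint}, but recording the height of the trajectory through the first successive minimum rather than through the sets $\mathcal{Q}_\epsilon$. I first set up the dictionary. For $C>0$ put
\[K_C=\prod_{i=1}^s\bigl\{x\in X_i:\lambda_1(x)\ge e^{-C}\bigr\},\]
where $\lambda_1(x)$ denotes the first minimum of the unimodular lattice $x$. By Mahler's criterion each factor has compact closure, and since $\lambda_1$ is continuous each factor is closed, hence compact; so $K_C$ is a compact subset of $X$, the family $\{K_C\}_{C>0}$ is increasing with union $X$, and therefore cofinal among the compact subsets of $X$. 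Writing $g_t=(g^{(m_1,n_1)}_{a_1t},\dots,g^{(m_s,n_s)}_{a_st})$ as in \eqref{eq;flow} and $\phi_\bTheta(t)=\min_{1\le i\le s}h_{\btheta_i,1}(a_it)$, the $i$-th coordinate of $g_tx_\bTheta$ is $g^{(m_i,n_i)}_{a_it}x_{\btheta_i}$, so for every $t\ge0$
\[g_tx_\bTheta\in K_C\iff h_{\btheta_i,1}(a_it)\ge -C\ \text{for all }i\iff \phi_\bTheta(t)\ge -C .\]
Finally, by Minkowski's first convex body theorem there is $C_0>0$, depending only on the integers $m_i+n_i$, with $h_{\btheta_i,1}(a_it)\le C_0$ for all $i$ and all $t$; hence $\phi_\bTheta\le C_0$ and $\mathbbm{1}_{K_C}(g_tx_\bTheta)=\mathbbm{1}_{[-C,C]}(\phi_\bTheta(t))$ whenever $C\ge C_0$. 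All of this holds verbatim with $\bTheta$ replaced by the subtuple $(\btheta_i)_{i\ne j}$, $F^+_\ba$ by $\pi_j(F^+_\ba)$ and $\phi_\bTheta$ by $t\mapsto\min_{i\ne j}h_{\btheta_i,1}(a_it)$, and nothing uses $s\ge2$, so it applies to any number of factors, including one.

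For part (1), $\bTheta\in D(F^+_\ba,\bM)$ means that $g_tx_\bTheta$ eventually leaves every compact subset of $X$; by cofinality of $\{K_C\}$ this is equivalent to requiring that for every $C>0$ one has $\phi_\bTheta(t)<-C$ for all large $t$, that is, $\lim_{t\to\infty}\phi_\bTheta(t)=-\infty$, which is \eqref{e:smf1}. For part (2), applying part (1) to the sub-product $\prod_{i\ne j}X_i$ with the flow $\pi_j(F^+_\ba)$ shows that $\pi_j(x_\bTheta)$ is $\pi_j(F^+_\ba)$-singular if and only if $\limsup_{t\to\infty}\min_{i\ne j}h_{\btheta_i,1}(a_it)=-\infty$; negating this for every $j$ and intersecting with the condition from (1) yields the stated description of $D^e(F^+_\ba,\bM)$.

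For part (3), by definition $\bTheta\in D_\delta(F^+_\ba,\bM)$ iff $\limsup_{T\to\infty}\frac1T\int_0^T\mathbbm{1}_K(g_tx_\bTheta)\dd t\le 1-\delta$ for every compact $K\subset X$. Since $\{K_C\}$ is cofinal, since $\mathbbm{1}_{K_C}(g_tx_\bTheta)=\mathbbm{1}_{[-C,C]}(\phi_\bTheta(t))$ for $C\ge C_0$, and since $\mathbbm{1}_{[-C,C]}\circ\phi_\bTheta\le\mathbbm{1}_{[-C',C']}\circ\phi_\bTheta$ when $C\le C'$, one checks directly that this is equivalent to \eqref{e:smf2} holding for every $C>0$ (continuity of each $h_{\btheta_i,1}$ makes all integrands measurable). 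Part (4) then follows by combining (3) with the analogous equivalence for $(\btheta_i)_{i\ne j}$ and $\pi_j(F^+_\ba)$: the negation of ``$\pi_j(x_\bTheta)$ is $(\pi_j(F^+_\ba),\delta)$-singular'' is precisely the existence of $C>0$ with $\limsup_{T\to\infty}\frac1T\int_0^T\mathbbm{1}_{[-C,C]}\bigl(\min_{i\ne j}h_{\btheta_i,1}(a_it)\bigr)\dd t>1-\delta$, and one makes the constant uniform in $j$ by replacing the various $C_j$ with their maximum (legitimate since $\mathbbm{1}_{[-C_j,C_j]}\le\mathbbm{1}_{[-C,C]}$ for $C\ge C_j$).

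I expect the only delicate point — worth writing out carefully, the rest being as routine as in Lemma \ref{L:joint} — to be the bookkeeping that transfers the quantifier ``for every compact $K$'' to the one-parameter cofinal family $\{K_C\}$, together with the observation (used only in parts (3) and (4)) that the upper cutoff in $[-C,C]$ is harmless because Minkowski's theorem bounds each $h_{\btheta_i,1}$ from above. Once these are in place all four equivalences follow, and I would leave the remaining routine verifications of parts (3) and (4) to the reader, exactly as the paper does for the corresponding parts of Lemma \ref{L:joint}.
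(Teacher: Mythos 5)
Your proof is correct and fills in exactly the routine details the paper waves at when it says the lemma is a direct consequence of the definitions and Mahler's compactness criterion: the cofinal exhaustion $\{K_C\}$, the identity $\mathbbm{1}_{K_C}(g_t x_{\bTheta}) = \mathbbm{1}_{[-C,C]}(\phi_{\bTheta}(t))$ for $C$ past the Minkowski bound, and the monotonicity in $C$ that lets one pass between ``for all compact $K$'' and ``for all $C>0$'' (and make the constant uniform in $j$ in part (4)). This is the same approach as the paper's, just made explicit.
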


\begin{proof}
These are direct consequences of the definitions of the joint singularity properties and Mahler's compactness criterion.
\end{proof}

To prove Proposition \ref{p-lower-bound}, we are going to construct $s$-tuples of templates $(\bL^1, \ldots , \bL^s)$ such that $\prod_{i=1}^s \cM(\bL^i)$ is contained in  $D^e(F_{\ba}^+, \bM)$ or $D^e_{\delta}(F_{\ba}^+, \bM)$.
Then the desired lower bounds for the Hausdorff dimensions of $D^e(F_{\ba}^+, \bM)$ and $D^e_{\delta}(F_{\ba}^+, \bM)$  follow from Theorem \ref{t-dfsu2}. Here and in what follows, when saying that $(\bL^1, \ldots , \bL^s)$ is an $s$-tuple of templates, we always assume $\bL^i$ is an $m_i\times n_i$ template on $[0,\infty)$, and write the $j$-th component of $\bL^i$ as $L^i_j$. Let $b_i$ be the number defined in \eqref{e:bi}, and suppose that \eqref{e:b1} holds. We will construct tuples of templates satisfying the following two lemmas.

 \begin{lemma}\label{p-lower-key-1}
There exists an $s$-tuple of templates $(\bL^1, \ldots , \bL^s)$ satisfying
\begin{eqnarray}
\underline{\delta}(\bL^1) &=& m_1n_1-b_1, \label{e-ess-dim-1}\\
\underline{\delta}(\bL^i) &=& m_in_i \quad \textrm{ for all } 2\le i \le s, \label{e-ess-dim-2}\\
\limsup_{t\to\infty}\min_{1\le i \le s}L^i_1(a_i t)&=& -\infty, \label{e-ess-div-1}\\
\limsup_{t\to\infty}\min_{\substack{1\le i \le s \\ i\ne j}}L^i_1(a_i t)& =& 0 \quad \textrm{ for all } 1\le j \le s. \label{e-ess-div-2}
\end{eqnarray}
\end{lemma}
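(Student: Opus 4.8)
The plan is to assemble each $\bL^i$ by gluing together, at the constant value $\mathbf{0}$, three kinds of elementary pieces. The first is the \emph{flat piece} $\mathbf{0}$ on an interval, whose average contraction rate is $m_in_i$ by Lemma \ref{r-trivial-dim}. The second, used only for $2\le i\le s$, is a \emph{plain dip} of depth $\rho$: a template on an interval of length $\Theta(\rho)$, equal to $\mathbf{0}$ at both endpoints, along which $L_1$ decreases to $\approx-\rho$ and returns. The third, used only for $i=1$, is a \emph{singular excursion} of depth $\rho$: again on an interval of length $\Theta(\rho)$ and equal to $\mathbf{0}$ at both endpoints, but now chosen so that $L_1$ dips to $\approx-\rho$ in the middle, its average contraction rate tends to $m_1n_1-b_1$ as $\rho\to\infty$, and all its partial average contraction rates stay $\ge m_1n_1-b_1-o(1)$. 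For $(m_1,n_1)\ne(1,1)$ this last piece is extracted by truncating the extremal singular $m_1\times n_1$ template of \cite{VarPrinc} (the one realizing $\dim\Sing_{m_1,n_1}=m_1n_1-b_1$); for $(m_1,n_1)=(1,1)$, where $\Sing_{1,1}=\QQ$ has dimension $0\ne\tfrac12=m_1n_1-b_1$ and no genuinely singular template can serve, one instead uses the down‑then‑up $1\times1$ excursion (descend at slope $-1$ to depth $-\rho$, ascend at slope $+1$ back to $0$), whose average contraction rate is $\tfrac12$ by the $1\times1$ contraction‑rate formula of \cite{VarPrinc}. Joining such pieces at $\mathbf{0}$ again produces a genuine template, since condition~(3) of Definition \ref{d-template} is imposed separately on each maximal subinterval on which $L_j<L_{j+1}$ and at the joints all coordinates coincide.

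Now fix an index sequence $j_1,j_2,\dots$ cycling through $\{2,\dots,s\}$, together with depth sequences $\rho_k\to\infty$ and $\sigma_k\to\infty$ with $\sigma_k$ much smaller than $\rho_k$ (the precise growth is discussed below). On the time axis, rescaled by the appropriate $a_i$ for each factor, string together with no gaps the successive singular excursions of $\bL^1$, the $k$‑th having depth $\rho_k$ and length $E_k=\Theta(\rho_k)$; in parallel, place the $k$‑th dip of $\bL^{j_k}$, of depth $\sigma_k$, so that it straddles the junction between the $k$‑th and $(k{+}1)$‑th excursions of $\bL^1$: let $\bL^{j_k}$ descend while $\bL^1$ finishes its $k$‑th ascent and ascend while $\bL^1$ begins its $(k{+}1)$‑th descent, arranging the overlap so that the two coordinates $L^1_1$ and $L^{j_k}_1$ active near this junction cross at depth $\Theta(\sigma_k)$; every other $\bL^i$ stays at $\mathbf{0}$ throughout. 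With this layout $\min_{1\le i\le s}L^i_1(a_it)\le-\Theta(\min(\sigma_{k-1},\sigma_k))$ whenever $t$ lies in the $k$‑th block (on the main part of the excursion it equals $L^1_1\le-\Theta(\sigma_{k-1})$, and near the junctions it is controlled by the crossing depths), which gives \eqref{e-ess-div-1}. On the other hand, on the main part of the $k$‑th excursion of $\bL^1$ all of $L^2_1,\dots,L^s_1$ vanish, so $\min_{i\ne1}L^i_1(a_it)=0$ there; and at the junction just after the $k$‑th excursion, $L^1_1=0$ and $L^i_1=0$ for every $i\ne j_k$, so $\min_{i\ne j_k}L^i_1(a_it)=0$ there. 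Since such times recur with $t\to\infty$ and all the quantities involved are $\le0$, this yields \eqref{e-ess-div-2} for every $j$.

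For the lower average contraction rates, each $\bL^i$ with $i\ge2$ is flat (rate $m_in_i$) except on the dips with $j_k=i$, whose total length up to the time $\asymp\sum_{j\le k}\rho_j$ at which they occur is $O(\sum_{j\le k}\sigma_j)$; choosing the depths so that $\sum_{j\le k}\sigma_j=o(\sum_{j\le k}\rho_j)$ forces the running average contraction rate of $\bL^i$ to tend to $m_in_i$, and since $m_in_i$ is the largest lower average contraction rate attained by any $m_i\times n_i$ template, $\underline{\delta}(\bL^i)=m_in_i$, that is, \eqref{e-ess-dim-2}. For $\bL^1$, the singular excursions have average rate $\to m_1n_1-b_1$ and all their partial averages stay $\ge m_1n_1-b_1-o(1)$, so the running average of $\bL^1$ stays $\ge m_1n_1-b_1-o(1)$, giving $\underline{\delta}(\bL^1)\ge m_1n_1-b_1$; evaluating $\Delta(\bL^1,T)$ at the instants inside the excursions where the partial average comes down to $m_1n_1-b_1+o(1)$ gives the matching $\underline{\delta}(\bL^1)\le m_1n_1-b_1$, hence $\underline{\delta}(\bL^1)=m_1n_1-b_1$, which is \eqref{e-ess-dim-1}. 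The required growth of the $\rho_k$ depends on the shape of the excursion and hence on whether $(m_1,n_1)=(1,1)$ — for $(1,1)$ the low‑rate (descending) portion comes first, so $\rho_k$ must grow slowly, e.g.\ $\rho_k=k$, while in the other cases the low‑rate portion can be placed so that faster growth is admissible — but in all cases a choice compatible with $\sigma_k=o(\rho_k)$ and $\sum_{j\le k}\sigma_j=o(\sum_{j\le k}\rho_j)$ exists.

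The main obstacle is exactly this simultaneous calibration of the two growth rates: condition \eqref{e-ess-div-1} forces both the excursion depths $\rho_k$ and the servicing‑dip depths $\sigma_k$ to tend to infinity, yet if $\rho_k$ grew too fast the running contraction rate of $\bL^1$ would drop below $m_1n_1-b_1$ during a descending part of an excursion, and if the servicing dips were not sparse enough the rates $\underline{\delta}(\bL^i)$ for $i\ge2$ would fall short of $m_in_i$. Threading between these constraints — and turning the informal ``running average'' picture used above into an argument with the actual notion of $\Delta$ from \cite[Definition~2.5]{VarPrinc}, together with supplying the three building blocks above as bona fide templates (the one place \cite{VarPrinc} is genuinely needed) — is the bulk of the work.
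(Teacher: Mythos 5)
Your construction is a genuine alternative to the paper's, and the high-level strategy is the right one: keep $\bL^1$ deep essentially all the time with brief scheduled returns to $\mathbf 0$, and cover those returns by deep dips of the other $\bL^i$. But the two arguments control the running average of $\bL^1$ by quite different mechanisms. The paper works on a two-scale grid $T_{k+1}=T_k+\sqrt{T_k}$: each interval $[T_k,T_{k+1}]$ is cut into $l_k\approx T_k^{1/3}$ pieces of length $\gamma_k\approx T_k^{1/6}$, and the depth of every standard-template piece is only $\log\gamma_k$, so the depth-to-length ratio $\log\gamma_k/\gamma_k$ tends to $0$ and Lemma~\ref{deltaST}(2) forces \emph{every} piece to have contraction rate $m_1n_1-b_1-O(\log\gamma_k/\gamma_k)$, uniformly along its length. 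The running average $\Delta(\bL^1,T)$ is then automatically close to $m_1n_1-b_1$ at all times, with no calibration of growth rates needed; likewise each $\bL^i$ ($i\ge2$) dips once per period $[T_k,T_{k+1}]$, to depth $\log\gamma_k$ on four pieces, so the fraction of each period spent dipping is $O(1/l_k)\to0$ and $\Delta(\bL^i,\cdot)\to m_in_i$ for free. You instead take excursions with depth-to-length ratio $\Theta(1)$, and must fall back on slow growth of $\rho_k$ to prevent the low-rate (descending) portion of a new excursion from dragging $\Delta(\bL^1,T)$ down.

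Two things in your write-up need repair. First, the claim in your opening paragraph that the singular-excursion building block has ``all its partial average contraction rates $\ge m_1n_1-b_1-o(1)$'' is false for a deep standard template: at the bottom of a down-then-up excursion of depth comparable to its length, the partial average is far below $m_1n_1-b_1$ (this is exactly the issue you then address in your last paragraph via slow growth, so the two paragraphs are in tension). Second, the detour through ``the extremal singular template of~\cite{VarPrinc}'' for $(m_1,n_1)\ne(1,1)$, and the separate treatment of $(1,1)$, is unnecessary and creates gaps (a truncated singular template does not return to $\mathbf 0$, and you would need to control the contraction rate of the appended ascent). The plain standard template $\bL((t',0),(t'',0))$ of Lemma~\ref{deltaST} is defined for every $(m,n)$, has contraction rate exactly $mn-\tfrac{mn}{m+n}$ over its full interval, and is precisely the building block the paper uses; using it uniformly removes the case distinction. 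With those corrections and with the $\rho_k$, $\sigma_k$ growth rates and the actual $\Delta$-computations written out, your route should go through — it simply trades the paper's uniform shallow-excursion control for a growth-rate calibration argument.
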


\begin{lemma}\label{p-lower-e-key}
 Let $\delta\in (0,1]$ and $\delta_1,\ldots,\delta_s \in (0,\delta)$ be such that $\sum_{i=1}^s\delta_i = \delta$. Then there exists an $s$-tuple of templates $(\bL^1, \ldots , \bL^s)$ such that
 \begin{equation}\label{e-del-dim}
 \underline{\delta}(\bL^i) = m_in_i-\delta_ib_i \quad  \text{ for all } 1\le i\le s,
 \end{equation}
  \begin{equation}\label{e-del-div-1}
\limsup_{T\to \infty}\frac{1}{T}\int_0^T \mathbbm{1}_{[-C, C]} \left(\min_{\substack{1\le i \le s \\ i\ne j}} L^i_1(a_it)\right)\dd t = 1-\sum_{i\ne j}\delta_i  \quad \text{ for all } 1\le j\le s, C>0,
 \end{equation}
  \begin{equation}\label{e-del-div-2}
\limsup_{T\to \infty}\frac{1}{T}\int_0^T \mathbbm{1}_{[-C, C]} \left(\min_{1\le i\le s}L^i_1(a_it)\right)\dd t = 1-\delta  \quad \text{ for all } C>0.
 \end{equation}
\end{lemma}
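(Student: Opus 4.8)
The plan is to build the $s$-tuple of templates component by component, designing each $\bL^i$ to be a periodic (or eventually periodic) template whose shape controls both its lower average contraction rate and the behaviour of $L^i_1$. The guiding idea is that the quantity $\min_{1\le i\le s} L^i_1(a_i t)$ being negative (resp.\ being outside $[-C,C]$) records joint singularity (resp.\ $(F^+_\ba,\delta)$-singularity), so I want to arrange the various $\bL^i_1$ to take excursions below $0$ on a controlled set of times, whose density in the time parameter equals $\delta_i$ for each $i$. The trivial template gives contraction rate $m_in_i$ by Lemma~\ref{r-trivial-dim}; deforming it so that $L^i_1$ dips down and comes back costs exactly $b_i$ per unit of ``dip time,'' which is the content already used in \cite{VarPrinc} to get $\dim\Sing_{m_i,n_i}=m_in_i-b_i$. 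So the construction for a single factor is essentially the template realizing $\delta$-escape-on-average with escape density $\delta_i$, whose lower average contraction rate is $m_in_i-\delta_ib_i$; I will cite the relevant template construction from \cite{VarPrinc} (the one used to prove the lower bound in Theorem~\ref{T:1.4}).

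The steps I would carry out are: (1)~Reparametrize time on the $i$-th factor by $t\mapsto a_it$, noting that a template on $[0,\infty)$ precomposed with a linear scaling is again a template up to adjusting the slope bounds; more simply, I will directly build $\bL^i$ as a function of the ``global'' time $t$ so that $t\mapsto\bL^i(a_it)$ has the desired dips, which amounts to building an $m_i\times n_i$ template in its own time variable $\tau=a_it$. (2)~For Lemma~\ref{p-lower-key-1}: make $\bL^1$ a template whose first coordinate $L^1_1(\tau)$ tends to $-\infty$ along a sequence of longer and longer dips but returns to a bounded range in between (so $\underline\delta(\bL^1)=m_1n_1-b_1$ — the dips have vanishing upper density so they cost nothing asymptotically in the $\liminf$ defining $\underline\delta$, yet $\limsup$ of $L^1_1$ is still $-\infty$), and make $\bL^i=\mathbf0$ the trivial template for $i\ge2$. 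Then $\min_{1\le i\le s}L^i_1(a_it)\le L^1_1(a_1t)$ goes to $-\infty$ along the dip times, giving \eqref{e-ess-div-1}; and for each $j$, $\min_{i\ne j}L^i_1(a_it)$ is either identically bounded (if $j\ne1$, since then $L^1_1$ is dropped but some trivial template contributes $0$... wait, one must check $j=1$ separately) — for $j=1$ the min is over trivial templates so it is $\equiv0$, hence $\limsup=0$; for $j\ne1$ the min still includes $\bL^1$ but also a trivial template contributing $0$, so again the min is $\le0$ and $=0$ infinitely often, giving $\limsup=0$. This yields \eqref{e-ess-div-2}. (3)~For Lemma~\ref{p-lower-e-key}: for each $i$ build $\bL^i$ so that $L^i_1(a_it)\in[-C,C]^c$ on a set of times of upper density exactly $\delta_i$, arranged so that the ``bad'' time-sets for distinct $i$ are \emph{disjoint} in $t$ (interleave the dips of the different factors). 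Then $\min_{1\le i\le s}L^i_1(a_it)$ is outside $[-C,C]$ precisely on the union of the bad sets, of density $\sum\delta_i=\delta$, giving \eqref{e-del-div-2}; removing factor $j$ removes its bad set, leaving density $\sum_{i\ne j}\delta_i$, giving \eqref{e-del-div-1}; and each $\bL^i$ individually has $\underline\delta(\bL^i)=m_in_i-\delta_ib_i$ by the single-factor computation, giving \eqref{e-del-dim}.

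The main obstacle is getting the \emph{exact} values in \eqref{e-ess-dim-1}--\eqref{e-ess-dim-2} and \eqref{e-del-dim} rather than just inequalities, and simultaneously controlling the $\limsup$/upper-density statements. The subtlety is that $\underline\delta(\bL)=\liminf_T\Delta(\bL,T)$ is a $\liminf$ while the divergence conditions involve $\limsup$, so I need the dips to occur in long ``bursts'' separated by very long ``recovery'' intervals: during a burst the contraction rate drops, but because bursts have vanishing density (for Lemma~\ref{p-lower-key-1}) or density exactly $\delta_i$ (for Lemma~\ref{p-lower-e-key}) among times measured up to $T$, one can time the recovery intervals so that $\Delta(\bL,T)$ returns arbitrarily close to the target value along a sequence $T\to\infty$ (controlling the $\liminf$ from below) while never exceeding it (controlling it from above, using that dips cost exactly $b_i$), and the $\limsup$ of $L^i_1$ picks out the burst endpoints. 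Making the arithmetic of the template slopes consistent with condition~(3) of Definition~\ref{d-template} — i.e.\ that $F^i_j$ be convex with slopes in $Z(j)$ on intervals where $L^i_j<L^i_{j+1}$ — during the transitions into and out of a dip is the fiddly part; I expect the cleanest route is to reuse verbatim the explicit ``$\delta$-escape'' templates from the lower-bound proof of Theorem~\ref{T:1.4} in \cite{VarPrinc}, rescale their time, concatenate copies with growing recovery gaps, and only verify that concatenation preserves templatehood and that the contraction-rate bookkeeping gives the stated limits.
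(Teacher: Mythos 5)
Your proposal is correct and takes essentially the same approach as the paper. The paper's construction makes each $\bL^i$ dip (with growing depth $\log\gamma_k$) on a subinterval $[a_it_{k,q_k^{i-1}},a_it_{k,q_k^{i}}]$ of $[a_iT_k,a_iT_{k+1}]$ occupying proportion $\approx\delta_i$ of the interval, chosen disjoint from the other factors' dip subintervals, and trivial elsewhere; the contraction-rate and first-minimum bookkeeping via Lemma~\ref{deltaST} then yields the stated limits exactly as you outline.
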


We postpone the proofs of Lemmas \ref{p-lower-key-1} and \ref{p-lower-e-key} and first deduce Proposition \ref{p-lower-bound} from them.

\begin{proof}[Proof of Proposition \ref{p-lower-bound}]
Let $(\bL^1, \ldots , \bL^s)$ be an $s$-tuple of templates satisfying \eqref{e-ess-dim-1}--\eqref{e-ess-div-2}. By \eqref{e-ess-div-1}, \eqref{e-ess-div-2} and Lemma \ref{p-smf}, we have
\[\prod_{i=1}^s \cM(\bL^i) \subset D^e(F_{\ba}^+, \bM). \]
On the other hand, if we let $\mathcal{L}^i$ denote the collection of templates $\bL$ such that $\bL^i-\bL$ is bounded, then $\mathcal{L}^i$ is Borel and is closed under finite perturbations, and hence it follows from Theorem \ref{t-dfsu2} that
\[\dim \cM(\bL^i)=\dim \cM(\mathcal{L}^i)=\sup_{\bL\in\mathcal{L}^i} \underline{\delta}(\bL)\ge\underline{\delta}(\bL^i).\]
These properties, together with \eqref{e-ess-dim-1} and \eqref{e-ess-dim-2}, imply that
\begin{align*}
 \dim D^e(F^+_\ba, \bM) &\ge \dim \prod_{i=1}^s \cM(\bL^i) \ge \sum_{i=1}^s  \dim \cM(\bL^i) \\
 &\ge \sum_{i=1}^s  \underline{\delta}(\bL^i)=\sum_{i=1}^s  m_in_i - b_1.
 \end{align*}
This proves \eqref{e-lower-bound-1}.

The proof of \eqref{e-lower-bound-2} is similar. Let $\delta_1,\ldots,\delta_s \in (0,\delta)$ be such that $\sum_{i=1}^s\delta_i = \delta$, and let $(\bL^1, \ldots , \bL^s)$ be an $s$-tuple of templates satisfying \eqref{e-del-dim}--\eqref{e-del-div-2}. We claim that
\[\prod_{i=1}^s \cM(\bL^i) \subset D^e_{\delta}(F_{\ba}^+, \bM). \]
In fact, if $\bTheta=(\btheta_1, \ldots, \btheta_s)\in\prod_{i=1}^s \cM(\bL^i)$, and if we denote
\[C_0=\sup_{t\ge0,1\le i\le s}|h_{\btheta_i,1}(t)-L^i_1(t)|,\]
then by \eqref{e-del-div-1}, for $1\le j\le s$, we have
\begin{align*}
& \limsup_{T\to \infty}\frac{1}{T}\int_0^T \mathbbm{1}_{[-C_0-1, C_0+1]} \left( \min_{\substack{1\le i \le s \\ i\ne j}} h_{\btheta_i,1}(a_i t)\right)  \dd t \\
\ge \ & \limsup_{T\to \infty}\frac{1}{T}\int_0^T \mathbbm{1}_{[-1, 1]} \left( \min_{\substack{1\le i \le s \\ i\ne j}} L^i_1(a_i t)\right)  \dd t
=  1-\sum_{i\ne j}\delta_i > 1-\delta,
\end{align*}
and by \eqref{e-del-div-2}, for any $C>0$, we have
\begin{align*}
& \limsup_{T\to\infty} \frac{1}{T}\int_0^T \mathbbm{1}_{[-C, C]} \left( \min_{1\le i \le s} h_{\btheta_i,1}(a_i t)\right)  \dd t \\
\le \ & \limsup_{T\to \infty}\frac{1}{T}\int_0^T \mathbbm{1}_{[-C-C_0, C+C_0]} \left(\min_{1\le i\le s}L^i_1(a_it)\right)\dd t = 1-\delta.
\end{align*}
It then follows from Lemma \ref{p-smf} that $\bTheta\in D^e_{\delta}(F_{\ba}^+, \bM)$. This verifies the claim.
Similar to the above case, Theorem \ref{t-dfsu2} implies that $\dim \cM(\bL^i)\ge\underline{\delta}(\bL^i)$.
Then by \eqref{e-del-dim}, we have
\begin{align*}
 \dim D_{\delta}^e(F^+_\ba, \bM) &\ge \dim \prod_{i=1}^s \cM(\bL^i) \ge\sum_{i=1}^s  \dim \cM(\bL^i) \\
 &\ge \sum_{i=1}^s  \underline{\delta}(\bL^i)= \sum_{i=1}^s  m_in_i - \sum_{i=1}^s  \delta_ib_i.
 \end{align*}
By letting $\delta_1=\delta-(s-1)\epsilon$ and $\delta_2=\cdots=\delta_s=\epsilon$ for sufficiently small $\epsilon>0$ and taking $\epsilon\to0$, we obtain
 $$\dim D_{\delta}^e(F^+_\ba, \bM) \ge \sum_{i=1}^s  m_in_i-\delta b_1.$$
 This completes the proof.
\end{proof}

\subsection{Standard templates}\label{construction}
The rest of this section is devoted to the proofs of Lemmas \ref{p-lower-key-1} and \ref{p-lower-e-key}. Our proofs will be constructive.
In this subsection, we first recall the notion of \emph{standard template} defined by two points introduced in \cite{VarPrinc}, which will be the building blocks of our construction.

\begin{definition}[\cite{VarPrinc}]
Given two points $(t', \eps'),(t'', \eps'')\in[0,\infty)^2$ with $t'<t''$, and denote $\Delta t = t''-t'$, $\Delta \eps = \eps''-\eps'$. Let us say that the pair of points $((t', \eps'), (t'', \eps''))$ is \emph{admissible} if it satisfies the following conditions:
\begin{equation}\label{condST1}
-\frac{\Delta t}{m} \le \Delta \eps \le \frac{\Delta t}{n},
\end{equation}
\begin{equation}\label{condST2}
\Delta \eps \ge - \frac{n-1}{2n}\Delta t \; \textrm{ if } m=1, \textrm{ and } \Delta\eps \le \frac{m-1}{2m} \Delta t \; \textrm{ if } n=1,
\end{equation}
\begin{equation}\label{condST3}
(n-1)\Big(\frac{\Delta t}{n}  - \Delta \eps\Big) \ge (m+n) \eps'  \textrm{ \ or \ } (m-1) \Big(\frac{\Delta t}{m}  + \Delta \eps\Big ) \ge (m+n) \eps''.
\end{equation}
The \emph{standard template} $\bL((t', \eps'), (t'', \eps''))$ associated to an admissible pair $((t', \eps'), (t'', \eps''))$ is the $m\times n$ template $(L_1, \ldots, L_{m+n})$ on $[t',t'']$ defined in the following way.
\begin{itemize}
  \item Let $g_1, g_2 : [t', t''] \to \RR$ be piecewise linear functions such that
  $$g_1(t')=g_2(t') = -\eps', \quad g_1(t'')=g_2(t'')=-\eps'',$$
   and $g_i$ has two intervals of linearity: one on which $g'_i = 1/m$ and the other  on which $g'_i = - 1/n$ . For $i = 1$ the latter interval comes first while for $i = 2$ the former interval comes first. The existence of such functions $g_1$ and $g_2$ is guaranteed by \eqref{condST1}. Finally, let $g_3 = \cdots = g_{m+n}$ be functions on  $[t', t'']$ chosen so that $g_1(t) + \cdots + g_{m+n}(t) = 0$ for all $t\in [t', t'']$.
  \item Let $t \in [t', t'']$. If $g_2(t) \leq g_3(t)$, define $L_j(t)=g_j(t)$ for all $1\le j\le m+n$.
   Otherwise, define $L_1(t) = g_1(t)$, and define $L_2(t) = \cdots= L_{m+n}(t)$ so that $L_1(t)+\cdots+L_{m+n}(t) =0$.
\end{itemize}

Moreover, let us say that a finite sequence of points $\{(t_l, \eps_l)\}_{1\le l \le k}$ is \emph{admissible} if for all $1\le l\le k-1$, the pair $((t_l, \eps_l), (t_{l+1}, \eps_{l+1}))$ is admissible. We define the \emph{standard template} associated to $\{(t_l, \eps_l)\}_{1\le l \le k}$ to be the template on the interval $[t_1, t_k]$ that equals $\bL((t_l, \eps_l), (t_{l+1}, \eps_{l+1}))$ on $[t_l, t_{l+1}]$.
\end{definition}

We will need the following statement.

\begin{lemma}\label{deltaST}
  Let $\bL$ be the standard template associated to an admissible pair of points $(t', \eps')$ and $(t'', \eps'')$. Then
  \begin{itemize}
    \item[(1)] $L_1(t)\le -\min \{\eps', \eps''\}$ for all $t\in [t', t'']$.
    \item[(2)] The average contraction rate on $[t',t'']$ is given by
\[ \Delta(\bL,[t',t'']) = mn-\frac{mn}{m+n} - O\left(\frac{\max(\eps', \eps'')}{t''-t'}\right).\]
  \end{itemize}
\end{lemma}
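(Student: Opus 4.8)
The plan is to unwind the explicit description of the standard template $\bL = \bL((t',\eps'),(t'',\eps''))$ and compute the two quantities directly. For part (1), recall that $L_1(t) = g_1(t)$ throughout $[t',t'']$, where $g_1$ is the piecewise linear function with $g_1(t') = -\eps'$, $g_1(t'') = -\eps''$, which first decreases with slope $-1/n$ and then increases with slope $1/m$. A function of this shape on an interval attains its maximum at one of the two endpoints, so $g_1(t) \le \max(-\eps', -\eps'') = -\min(\eps',\eps'')$ for all $t \in [t',t'']$. This gives (1) immediately; I would just note that the admissibility condition \eqref{condST1} is exactly what guarantees that such a "V-shaped" $g_1$ exists with the prescribed endpoint values.

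For part (2), I would first recall the definition of $\Delta(\bL,[T_1,T_2])$ from \cite[Definition 2.5]{VarPrinc}: it is a normalized integral (divided by $T_2 - T_1$) of a certain piecewise-constant "instantaneous contraction rate" function built from the slopes of the $L_j$. The strategy is to compute this integrand on each of the (at most a few) linearity intervals of $\bL$. The key structural observation is that $\bL$ is obtained from the "generic" configuration $g_1,\dots,g_{m+n}$ by a truncation rule (the second bullet in the definition), and the truncation only happens on a sub-interval whose length is $O(\max(\eps',\eps''))$ — because once the functions $g_2,\dots,g_{m+n}$ have relaxed back to being equal (which takes time proportional to the initial gap, hence $O(\max(\eps',\eps'))$), no further truncation occurs. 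On the bulk of $[t',t'']$ — an interval of length $\Delta t - O(\max(\eps',\eps'))$ — the template coincides with a rescaled/translated copy of a fixed "model" template whose average contraction rate is the value $mn - \frac{mn}{m+n}$ one gets for the trivial situation with one positive Lyapunov exponent. On the exceptional sub-interval of length $O(\max(\eps',\eps'))$ the integrand is still bounded (by a constant depending only on $m,n$, since slopes lie in a fixed finite set), so its contribution to the normalized integral is $O\!\big(\max(\eps',\eps')/(t''-t')\big)$. Combining the bulk contribution and the error term yields the stated formula.

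The main obstacle I anticipate is bookkeeping rather than conceptual: one must carefully identify the breakpoints of the piecewise linear functions $g_1, g_2$ and of the truncated $L_j$, verify that the "return to equality" time of $g_2 = \cdots = g_{m+n}$ is indeed $O(\max(\eps',\eps'))$ uniformly, and check that on each linearity interval the contraction-rate integrand from \cite[Definition 2.5]{VarPrinc} takes the value one expects. The value $mn - \frac{mn}{m+n}$ on the bulk should be cross-checked against Lemma \ref{r-trivial-dim} and the single-positive-Lyapunov-exponent computation already used in Section \ref{UB} (cf.\ the weight $\bb_0$), which makes the identification of the leading term essentially a reference to \cite[Section 28]{VarPrinc} rather than a fresh calculation. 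I would therefore present part (2) by (a) splitting $[t',t'']$ into the bulk and the $O(\max(\eps',\eps'))$-length transient, (b) computing $\Delta$ exactly on the bulk via the model template, and (c) absorbing the transient into the error term using the uniform bound on the integrand.
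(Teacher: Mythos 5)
Your argument for part (1) is correct and is exactly what the paper has in mind when it writes ``(1) follows directly from the definition'': since $L_1 = g_1$ on all of $[t',t'']$ (in both branches of the definition), and $g_1$ is convex (first slope $-1/n$, then slope $1/m$), its maximum is attained at an endpoint, giving $L_1 \le \max(-\eps',-\eps'') = -\min(\eps',\eps'')$. For part (2), the paper does not give a proof at all; it simply refers to the paragraph below Definition~12.4 in \cite{VarPrinc}, so there is no internal argument to compare against.

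That said, your sketch for part (2) contains a concrete error that would derail an attempted write-up. You assert that the truncation (the set where $g_2 > g_3$, on which the template overwrites $g_2,\dots,g_{m+n}$) is confined to a subinterval of length $O(\max(\eps',\eps''))$, so that on the ``bulk'' the template is a non-truncated model. This is false. Take $m=2$, $n=1$, $\eps'=\eps''=0$ (an admissible pair). Then $g_1$ has its vertex at $t'+\Delta t/3$ and $g_2$ at $t'+2\Delta t/3$, and a direct computation of $g_3 = -(g_1+g_2)$ shows $g_2 = g_3$ on $[t', t'+\Delta t/3]$ but $g_2 > g_3$ on the entire interval $(t'+\Delta t/3, t'')$, i.e.\ on two thirds of $[t',t'']$ even though $\eps'=\eps''=0$. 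So the truncated region is generically a positive fraction of the interval, independent of $\eps',\eps''$. Relatedly, the leading term $mn - \tfrac{mn}{m+n}$ is not the contraction rate of some ``non-truncated bulk'': it is a global feature of the V-shape of $L_1$ (and the accompanying bump in $L_{m+n}$), present already when $\eps'=\eps''=0$, truncation and all. The $O\!\left(\max(\eps',\eps'')/(t''-t')\right)$ error reflects the perturbation of the endpoint heights away from $0$, not a small truncated transient. A correct direct proof would integrate the contraction-rate integrand of \cite[Definition~2.5]{VarPrinc} over the explicit linearity pieces of $\bL$ for general $\eps',\eps''$, with the truncation accounted for on whatever (possibly macroscopic) set it occurs, rather than attempting a bulk/transient split keyed to truncation.
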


\begin{proof}
 (1) follows directly from the definition. For the proof of (2), see the paragraph below Definition 12.4 in \cite{VarPrinc}.
\end{proof}

The following simple observation will be also useful.

\begin{lemma}\label{l-adm}
Any pair of points $((t', \eps'), (t'', \eps''))$ that satisfies
\begin{equation}\label{e-admiss}
  t''-t'\ge (m+n)^2 \max(\eps', \eps'')
\end{equation}
is admissible.
\end{lemma}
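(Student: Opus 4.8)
\textbf{Proof plan for Lemma \ref{l-adm}.}
The strategy is simply to verify, under the hypothesis \eqref{e-admiss}, each of the three defining conditions \eqref{condST1}, \eqref{condST2}, \eqref{condST3} of admissibility in turn. Write $\Delta t = t''-t'$ and $\Delta\eps = \eps''-\eps'$, and put $M = \max(\eps',\eps'')$, so that the hypothesis reads $\Delta t \ge (m+n)^2 M$. Note $|\Delta\eps|\le M$ and $0\le\eps',\eps''\le M$.

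First I would check \eqref{condST1}. Since $m,n\ge 1$ and $\Delta t\ge (m+n)^2 M\ge M\ge |\Delta\eps|$, we certainly have $-\Delta t \le \Delta\eps\le \Delta t$; but we need the sharper bounds $-\Delta t/m\le\Delta\eps\le\Delta t/n$. Here observe that $(m+n)^2\ge \max(m,n)$, so $\Delta t/m \ge (m+n)^2 M/m \ge M\ge |\Delta\eps|$ and likewise $\Delta t/n\ge M\ge|\Delta\eps|$; this gives \eqref{condST1}. Next, for \eqref{condST2}, suppose $m=1$ (the case $n=1$ is symmetric). We need $\Delta\eps\ge -\frac{n-1}{2n}\Delta t$; if $n=1$ this says $\Delta\eps\ge 0$, which is not automatic, but when $m=n=1$ condition \eqref{condST2} is vacuous as stated (the ``$m=1$'' clause requires $n\ge 2$ to be nontrivial, and one checks the $m=n=1$ case separately — indeed for $m=n=1$ there is no constraint). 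For $n\ge 2$ we have $\frac{n-1}{2n}\Delta t\ge \frac14\Delta t\ge \frac14 (1+n)^2 M\ge M\ge -\Delta\eps$, giving \eqref{condST2}. Finally, for \eqref{condST3}: it suffices to establish the first alternative $(n-1)(\Delta t/n - \Delta\eps)\ge (m+n)\eps'$ when $n\ge 2$ (and the second alternative symmetrically when $m\ge 2$; when $m=n=1$, \eqref{condST3} reads $0\ge 2\eps'$ or $0\ge 2\eps''$, forcing $\eps'=0$ or $\eps''=0$ — this degenerate case must again be handled directly, noting that a standard template defined by two points with both $\eps$'s positive and $m=n=1$ does not arise in our constructions, or simply restricting the lemma's use to $(m,n)\ne(1,1)$). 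Assuming $n\ge 2$: $\Delta t/n - \Delta\eps\ge (m+n)^2 M/n - M \ge (m+n)M/n\cdot(m+n) - M$; more crudely, $\Delta t/n\ge (m+n)^2M/n \ge (m+n)M + M$ (since $(m+n)^2/n\ge (m+n)+1$ for $m+n\ge 2$), so $\Delta t/n - \Delta\eps \ge (m+n)M$, hence $(n-1)(\Delta t/n-\Delta\eps)\ge (m+n)M\ge (m+n)\eps'$.

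The only mild obstacle is the bookkeeping around the degenerate case $m=n=1$, where conditions \eqref{condST2}–\eqref{condST3} become genuinely restrictive rather than automatic; in the body of the paper the lemma is applied only with $\eps$-values and configurations for which these are satisfied (or with $(m,n)\ne(1,1)$), so I would either add the hypothesis $(m,n)\ne(1,1)$ or remark that for $m=n=1$ the standard templates used are built from points with one vanishing $\eps$-coordinate. Apart from that, the proof is a routine chain of inequalities, each reduced to the single estimate $\Delta t\ge(m+n)^2\max(\eps',\eps'')$ together with $(m+n)^2\ge\max(m,n)$ and $(m+n)^2/n\ge(m+n)+1$.
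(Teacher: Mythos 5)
Your verification of \eqref{condST1}--\eqref{condST3} for $m+n\ge3$ is correct and is exactly what the paper means by ``easily checked'': the estimates $(m+n)^2\ge\max(m,n)$, $\frac{n-1}{2n}\ge\frac14$ for $n\ge2$, and $(m+n)^2/n\ge(m+n)+1$ do all the work, and the paper's own proof is just as terse.

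The case $m=n=1$ is where your argument goes astray, in two places. First, \eqref{condST2} is \emph{not} vacuous for $m=n=1$ as you claim: both clauses fire, giving $\Delta\eps\ge0$ and $\Delta\eps\le0$ simultaneously, i.e.\ $\Delta\eps=0$ — a genuine constraint. Combined with your (correct) observation that \eqref{condST3} forces $\eps'=0$ or $\eps''=0$, a literal reading of the admissibility conditions says that for $m=n=1$ admissibility requires $\eps'=\eps''=0$; hence the lemma as stated would fail for $(m,n)=(1,1)$ (take $\eps'=\eps''=1$, $\Delta t=100$: \eqref{e-admiss} holds but \eqref{condST3} does not). Second, and more seriously, your proposed remedy — that two-point standard templates with both $\eps$-coordinates positive and $m=n=1$ ``do not arise in our constructions,'' or that one can ``simply restrict the lemma's use to $(m,n)\ne(1,1)$'' — is factually wrong: the constructions in Sections 4.4 and 4.6 repeatedly invoke pairs such as $(t_{k,l},\log\gamma_k),(t_{k,l+1},\log\gamma_k)$ and $(t_{k,l_k-1},\log\gamma_k),(T_{k+1},\log\gamma_{k+1})$, where both $\eps$'s are positive, and the paper explicitly needs $(m_i,n_i)=(1,1)$ (the introduction emphasizes that this is the genuinely new case). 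So the restriction you propose would invalidate the paper's main theorem. The degenerate case must instead be reconciled by examining the source \cite{VarPrinc}: most plausibly \eqref{condST2}--\eqref{condST3} are to be read as vacuous when $m+n=2$, since there is no filler function $g_3$ in the standard-template construction for them to govern. As written, though, your treatment both misreads \eqref{condST2} and misstates the scope of the paper's constructions, so the $m=n=1$ case remains a real gap in the proof.
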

\begin{proof}
Given \eqref{e-admiss}, the conditions \eqref{condST1}, \eqref{condST2} and \eqref{condST3} are easily checked.
\end{proof}

\subsection{Construction of templates (I)  }
Now let us begin to prove Lemma \ref{p-lower-key-1}. In this subsection, we construct the templates we need and in the next one, we verify that they satisfy the conditions of Lemma \ref{p-lower-key-1}.

Set $T_0=1$ and $T_{k+1}=T_k+\sqrt{T_k}$.   Set
$$l_k=\big[\sqrt[3]{T_k}\,\big],\quad   \gamma_k=l_k^{-1}\sqrt{T_k},$$
and for $0\le l\le l_k$, set $t_{k,l}=T_k + l\gamma_k$. Clearly, we have $t_{k,0}=T_k$ and $t_{k, l_k}=T_{k+1}$.  Note that $\gamma_k$ goes to infinity when $k$ goes to infinity. So, there exists $k_0>0$ such that for any $k\ge k_0$,
\begin{equation}\label{e-valid}
  l_k \ge 8s, \quad \text{ and } \quad a_i\gamma_k\ge (m_i+n_i)^2 \log \gamma_{k} \text{ for all } 1\le i\le s .
\end{equation}

We define  $\bL^1$ as follows (see Figure \ref{fig1} for the $s=2$ case):
\begin{itemize}
  \item On the interval $[0, a_1T_{k_0}]$, set  $\bL^1$ to be the trivial template.
  \item On the interval $[a_1T_{k_0}, a_1T_{k_0+1}]$, set $\bL^1$ to be the standard template associated to the pair of points
      \[(a_1T_{k_0}, 0), (a_1T_{k_0+1}, \log\gamma_{k_0+1}).\]
  \item Let $k\ge k_0+1$. On the subinterval $[a_1t_{k, 4s-4}, a_1T_{k+1}]$ of $[a_1 T_k, a_1 T_{k+1}]$, set $\bL^1$ to be the standard template associated to the sequence of points
      \[(a_1t_{k, 4s-4},\log \gamma_k), (a_1t_{k, 4s-3},\log \gamma_k), \ldots, (a_1t_{k, l_k-1},\log \gamma_k), (a_1 T_{k+1}, \log \gamma_{k+1}).\]
      For $0\le l\le s-2$, on the subinterval $[a_1t_{k, 4l}, a_1t_{k, 4l+4}]$, set $\bL^1$ to be the standard template associated to the sequence of points
      \[(a_1 t_{k, 4l},\log \gamma_k), (a_1 t_{k, 4l+1},\log \gamma_k), (a_1 t_{k, 4l+2},0), (a_1 t_{k, 4l+3},\log \gamma_k), (a_1 t_{k,4l+4}, \log \gamma_k).\]
\end{itemize}
According to Lemma \ref{l-adm} and \eqref{e-valid}, all the sequences of points appearing above are admissible, hence the construction is valid.

Also, we define $\bL^i$ for $2\le i\le s$ as follows (see Figure \ref{fig1} for the $s=2$ case):
\begin{itemize}
  \item On the interval $[0, a_iT_{k_0}]$, set  $\bL^i$ to be the trivial template.
  \item On the interval $[a_iT_{k_0}, a_iT_{k_0+1}]$, set $\bL^i$ to be the standard template associated to the pair of points
      \[(a_iT_{k_0}, 0), (a_iT_{k_0+1}, \log\gamma_{k_0+1}).\]
  \item Let $k\ge k_0+1$. On the subinterval $[a_iT_k, a_it_{k, 4i-8}]$ of $[a_i T_k, a_i T_{k+1}]$, set $\bL^i$ to be the trivial template. On the subinterval $[a_i t_{k, 4i-8}, a_i t_{k, 4i-4}]$, set $\bL^i$ to be the standard template associated to the sequence of points
      \[(a_i t_{k, 4i-8},0), (a_i t_{k, 4i-7},\log \gamma_k), (a_i t_{k, 4i-6},\log \gamma_k),(a_i t_{k, 4i-5},\log \gamma_k), (a_i t_{k, 4i-4},0).\]
      On the subinterval $[a_i t_{k, 4i-4}, a_i T_{k+1}]$, set $\bL^i$ to be the trivial template.
 \end{itemize}
  According to Lemma \ref{l-adm} and \eqref{e-valid}, all the sequences of points appearing above are admissible, hence the construction is valid.
\\

\begin{figure}[h!]
 \begin{center}
 \begin{tikzpicture}[scale=0.8]
 \draw[black, semithick] (15,0)--(20,0) node [right,black] { $t$ } ;
 \draw[black, semithick] (0,0)--(13,0) node [right,black] {  } ;

 \draw (14,0) node{$\cdots$};

  \fill (1,0) circle[radius=2pt] node [above,black] { $T_k$ } ;
    \draw[black, dashed] (1,0)--(1,-8) ;
  \fill (19,0) circle[radius=2pt] node [above,black] { $T_{k+1}$ } ;
   \draw[black, dashed] (19,0)--(19,-8) ;
  \fill (3,0) circle[radius=2pt] node [above,black] { $t_{k,1}$ } ;
    \draw[black, dashed] (3,0)--(3,-8) ;
  \fill (5,0) circle[radius=2pt] node [above,black] { $t_{k,2}$ } ;
    \draw[black, dashed] (5,0)--(5,-8) ;
    \fill (7,0) circle[radius=2pt] node [above,black] { $t_{k,3}$ } ;
    \draw[black, dashed] (7,0)--(7,-8) ;
    \fill (9,0) circle[radius=2pt] node [above,black] { $t_{k,4}$ } ;
    \draw[black, dashed] (9,0)--(9,-8) ;
    \fill (11,0) circle[radius=2pt] node [above,black] { $t_{k,5}$ } ;
    \draw[black, dashed] (11,0)--(11,-8) ;

  \fill (17,0) circle[radius=2pt] node [above,black] { $t_{k,l_k-1}$ } ;
      \draw[black, dashed] (17,0)--(17,-8) ;


      \draw[black, dashed] (15,-3)--(20,-3) node [right,black] { $\bL^1(a_1 t)$ } ;
       \draw[black, dashed] (0,-3)--(13,-3) ;
       \draw (14,-3) node{$\cdots$};


       \draw[black, semithick] (1,-2.6)--(1.5,-2.8);
      \draw[black, semithick] (1,-3.6)--(1.5,-2.8);
      \draw[black, semithick] (1,-3.6)--(2.4,-4.6);
      \draw[black, semithick] (2.4,-4.6)--(3.5,-2.8);
      \draw[black, semithick] (1.5,-2.8)--(1.7,-2.7);
      \draw[black, semithick] (1.7,-2.7)--(2.4,-2.3);
      \draw[black, semithick] (1.7,-2.7)--(4.2,-4.4);
      \draw[black, semithick] (2.4,-2.3)--(3.5,-2.8);
      \draw[black, semithick] (3.5,-2.8)--(5,-3);
      \draw[black, semithick] (4.2,-4.4)--(5,-3);


      \draw[black, semithick] (5,-3)--(6,-2.8);
      \draw[black, semithick] (5,-3)--(6.6,-4.2);
      \draw[black, semithick] (6,-2.8)--(6.6,-2.4);
      \draw[black, semithick] (6,-2.8)--(7,-3.6);
      \draw[black, semithick] (6.6,-4.2)--(7,-3.6);
      \draw[black, semithick] (6.6,-2.4)--(7,-2.6);

      \draw[black, semithick] (7,-2.6)--(7.5,-2.9);
      \draw[black, semithick] (7,-3.6)--(7.5,-2.9);
      \draw[black, semithick] (7,-3.6)--(8.4,-4.4);
      \draw[black, semithick] (8.4,-4.4)--(9,-3.6);
      \draw[black, semithick] (7.5,-2.9)--(7.8,-2.8);
      \draw[black, semithick] (7.8,-2.8)--(9,-3.6);
      \draw[black, semithick] (8.4,-2.2)--(9,-2.6);
      \draw[black, semithick] (7.8,-2.8)--(8.4,-2.2);

      \draw[black, semithick] (9,-2.6)--(9.5,-2.9);
      \draw[black, semithick] (9,-3.6)--(9.5,-2.9);
      \draw[black, semithick] (9,-3.6)--(10.4,-4.4);
      \draw[black, semithick] (10.4,-4.4)--(11,-3.6);
      \draw[black, semithick] (9.5,-2.9)--(9.8,-2.8);
      \draw[black, semithick] (9.8,-2.8)--(11,-3.6);
      \draw[black, semithick] (10.4,-2.2)--(11,-2.6);
      \draw[black, semithick] (9.8,-2.8)--(10.4,-2.2);

      \draw[black, semithick] (11,-2.6)--(11.5,-2.9);
      \draw[black, semithick] (11,-3.6)--(11.5,-2.9);
      \draw[black, semithick] (11,-3.6)--(12.4,-4.4);
      \draw[black, semithick] (12.4,-4.4)--(13,-3.6);
      \draw[black, semithick] (11.5,-2.9)--(11.8,-2.8);
      \draw[black, semithick] (11.8,-2.8)--(13,-3.6);
      \draw[black, semithick] (12.4,-2.2)--(13,-2.6);
      \draw[black, semithick] (11.8,-2.8)--(12.4,-2.2);

      \draw[black, semithick] (17,-2.6)--(17.5,-2.9);
      \draw[black, semithick] (17,-3.6)--(17.5,-2.9);
      \draw[black, semithick] (17,-3.6)--(18.4,-4.4);
      \draw[black, semithick] (18.4,-4.4)--(19,-3.6);
      \draw[black, semithick] (17.5,-2.9)--(17.8,-2.8);
      \draw[black, semithick] (17.8,-2.8)--(19,-3.7);
      \draw[black, semithick] (18.4,-2.2)--(19,-2.5);
      \draw[black, semithick] (17.8,-2.8)--(18.4,-2.2);

      \draw[black, dashed] (0,-7)--(9,-7) ;
      \draw[black, dashed] (15,-7)--(20,-7)node [right,black] { $\bL^2(a_2 t)$ }  ;
      \draw[black, semithick] (9,-7)--(13,-7);
      \draw[black, semithick] (15,-7)--(20,-7);

      \draw (14,-7) node{$\cdots$};


       \draw[black, semithick] (1,-7)--(2,-6.8);
      \draw[black, semithick] (1,-7)--(2.6,-8.2);
      \draw[black, semithick] (2,-6.8)--(2.6,-6.4);
      \draw[black, semithick] (2,-6.8)--(3,-7.6);
      \draw[black, semithick] (2.6,-8.2)--(3,-7.6);
      \draw[black, semithick] (2.6,-6.4)--(3,-6.6);

      \draw[black, semithick] (3,-6.6)--(3.5,-6.9);
      \draw[black, semithick] (3,-7.6)--(3.5,-6.9);
      \draw[black, semithick] (3,-7.6)--(4.4,-8.4);
      \draw[black, semithick] (4.4,-8.4)--(5,-7.6);
      \draw[black, semithick] (3.5,-6.9)--(3.8,-6.8);
      \draw[black, semithick] (3.8,-6.8)--(5,-7.6);
      \draw[black, semithick] (4.4,-6.2)--(5,-6.6);
      \draw[black, semithick] (3.8,-6.8)--(4.4,-6.2);

      \draw[black, semithick] (5,-6.6)--(5.5,-6.8);
      \draw[black, semithick] (5,-7.6)--(5.5,-6.8);
      \draw[black, semithick] (5,-7.6)--(6.4,-8.6);
      \draw[black, semithick] (6.4,-8.6)--(7.5,-6.8);
      \draw[black, semithick] (5.5,-6.8)--(5.7,-6.7);
      \draw[black, semithick] (5.7,-6.7)--(6.4,-6.3);
      \draw[black, semithick] (5.7,-6.7)--(8.2,-8.4);
      \draw[black, semithick] (6.4,-6.3)--(7.5,-6.8);
      \draw[black, semithick] (7.5,-6.8)--(9,-7);
      \draw[black, semithick] (8.2,-8.4)--(9,-7);

  \end{tikzpicture}
 \end{center}
 \caption{The $s=2$ case of $\bL^1(a_1 t)$ and $\bL^2(a_2 t)$ on the interval $[T_k,T_{k+1}]$, $k\ge k_0+1$. }\label{fig1}
 \end{figure}
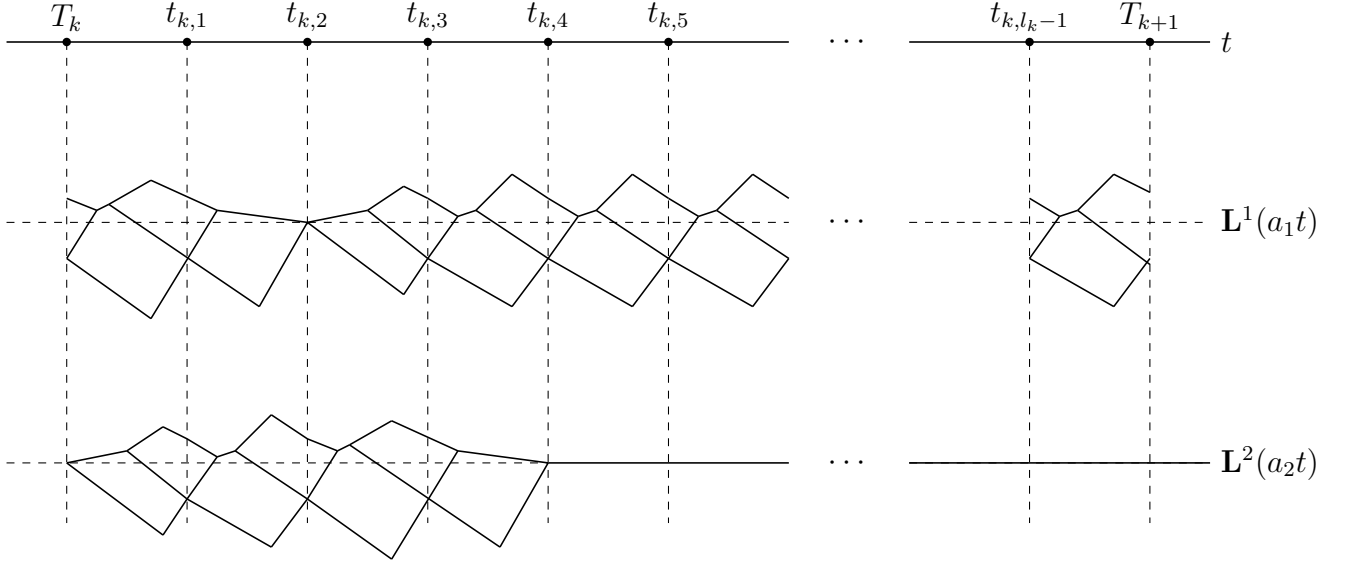

\subsection{Proof of Lemma \ref{p-lower-key-1}}\label{s-4}

By Lemma \ref{deltaST}(1) and our construction, for $k\ge k_0+1$, we have
\[L^1_1(a_1t)\le -\log \gamma_k \text{ on the subset } [T_k, T_{k+1}]\setminus \bigcup_{0\le l\le s-2} [t_{k, 4l+1,}, t_{k, 4l+3}], \]
\[L^i_1(a_it)\le -\log \gamma_k \text{ on the interval } [t_{k, 4i-7}, t_{k, 4i-5}] \text{ for } 2\le i\le s.  \]
Thus,
\[\lim_{k\to \infty}\max_{T_k\le t\le T_{k+1}}\min_{1\le i\le s}  L^i_1(a_it)\le \lim_{k\to \infty}-\log \gamma_k
= -\infty, \]
 which proves \eqref{e-ess-div-1}.

 It is clear from the definition that, for any $m\times n$ template $\bL=(L_1, \ldots, L_{m+n})$ defined on $[0, \infty)$, we always have $L_1(t)\le 0$ for all $t\ge 0$. So the only non-obvious part of \eqref{e-ess-div-2}  is the $``\ge"$ part, the proof of which will be divided into two cases. When $j=1$, it is clear from the construction that, for $k\ge k_0+1$, we have
\[L^i_1(a_i t_{k, 4s})=0 \text{ for all } 2\le i \le s.\]
Thus,
\[\lim_{k\to \infty}\max_{T_k\le t\le T_{k+1}}\min_{2\le i\le s}  L^i_1(a_it)\ge 0. \]
When $2\le j\le s$, it is also clear from the construction that, for $k\ge k_0+1$, we have
\[L^1_1(a_1 t_{k, 4j-6})=0 \text{ and } L^i_1(a_i t_{k, 4j-6})=0  \text{ for all } 2\le i \le s, i\ne j.\]
Thus,
\[\lim_{k\to \infty}\max_{T_k\le t\le T_{k+1}}\min_{\substack{1\le i\le s\\ i\ne j}}  L^i_1(a_it)\ge 0. \]
This completes the proof of  \eqref{e-ess-div-2}.


Now we are going to prove \eqref{e-ess-dim-1} and \eqref{e-ess-dim-2}.  Note that for $1\le i\le s$, $k>0$,  and $T \in [a_iT_k, a_i T_{k+1}]$, we have
\[\Delta(\bL^i, T) - \Delta(\bL^i, a_iT_k) = O \left(\frac{T- a_iT_k}{a_iT_k}\right) = O \left( \frac{1}{\sqrt{T_k}}\right), \]
which goes to $0$ as $k$ tends to infinity.
Hence it suffices to compute $\Delta(\bL^i,T)$ at $a_iT_k$.
By definition,
$$\Delta\left(\bL^i, a_iT_k\right) = \sum_{0\le j\le k-1} \frac{T_{j+1}-T_j}{T_k}\Delta\left(\bL^i, [a_iT_j, a_iT_{j+1}]\right).$$
As $T_{j+1}-T_j=\sqrt{T_j}$ goes to infinity when $j$ goes to infinity, to complete  the proof, it suffices to show that
\begin{eqnarray*}
  \lim_{k\rightarrow \infty} \Delta\left(\bL^1, [a_1T_k, a_1T_{k+1}]\right) &=& m_1n_1-b_1, \\
  \lim_{k\rightarrow \infty} \Delta\left(\bL^i, [a_iT_k, a_iT_{k+1}]\right) &=& m_in_i.
\end{eqnarray*}
In view of Lemma \ref{deltaST}(2), we get
\begin{align*}
\lim_{k\rightarrow \infty}\Delta\left(\bL^1, [a_1T_k, a_1 T_{k+1}]\right) &= \lim_{k\rightarrow \infty}\sum_{0\le l\le l_k-1} \frac{t_{k,l+1}-t_{k,l}}{\sqrt{T_k}} \Delta\left(\bL^1, [a_1t_{k,l}, a_1t_{k,l+1}]\right)\\
&= m_1n_1-b_1+\lim_{k\rightarrow \infty}O\left(\frac{\log \gamma_k}{\gamma_k}\right)\\
&= m_1n_1-b_1
\end{align*}
and
\begin{align*}
&\ \lim_{k\rightarrow \infty}\Delta(\bL^i, [a_iT_k, a_iT_{k+1}])\\ &= \lim_{k\rightarrow \infty}\sum_{0\le l\le l_k-1} \frac{t_{k,l+1}-t_{k,l}}{\sqrt{T_k}} \Delta(\bL^i, [a_it_{k,l}, a_it_{k,l+1}])\\
&= \lim_{k\rightarrow \infty} \frac{1}{l_k}\left( (l_k-4s+4)m_in_i+(4s-4)\left(m_in_i-b_i+O\left(\frac{\log \gamma_k}{\gamma_k}\right)\right)\right)\\
&= m_in_i.
\end{align*}
Here we are using the fact that both $\gamma_k$ and $l_k$ tend to infinity when $k$ goes to infinity. This completes the proof of  Lemma \ref{p-lower-key-1}.
\qed

\begin{remark}\label{r-compare-sing}
Note that for the $\bL^1$  constructed above, any matrix $\btheta$ with $\bh_\btheta-\bL^1$ bounded  is $1$-singular, but not singular. In particular, it follows from Theorem \ref{t-dfsu2} and \eqref{e-ess-dim-1} that there are many $1$-singular matrices that are not singular.
\end{remark}

\subsection{Construction of templates (II)  }
We now prove Lemma \ref{p-lower-e-key}. As in the proof of Lemma \ref{p-lower-key-1}, we first construct the templates, and then verify the required properties.

Set $T_0=1$ and $T_{k+1}=T_k+\sqrt{T_k}$.   Set
$$l_k=\big[\sqrt[3]{T_k}\,\big], \quad \gamma_k=l_k^{-1}\sqrt{T_k},$$
and for $0\le l\le l_k$, set $t_{k,l}=T_k + l\gamma_k$. Clearly, we have $t_{k,0}=T_k$ and $t_{k, l_k}=T_{k+1}$.
Let us fix $\delta_i\in(0,\delta)$ with $\sum_{1\le i\le s} \delta_i=\delta$.
For any $k\ge 0$, set $q_k^0=0$ and for $1\le j\le s$, set
\[q_k^j= \max \left\{0\le l\le l_k: l\gamma_k \le \sum_{1\le i\le j}\delta_i\sqrt{T_k} \right\},\]
Clearly, there exists $k_0>0$ such that for any $k\ge k_0$, we have
\begin{equation}\label{e-valid-1}
  a_i\gamma_k\ge (m_i+n_i)^2 \log \gamma_{k} \text{ for all } 1\le i\le s,
\end{equation}
and
\begin{equation}\label{e-gap}
l_k-q_k^s \ge 4 \text{ and } q_k^{j+1}-q_k^j \ge 4 \text{ for all } 0\le j\le s-1.
\end{equation}
Note that by construction, for any $1\le i \le s$, we have
\begin{equation}\label{e-lim}
 \frac{q_k^{i}-q_k^{i-1}}{l_k} \to_{k\to\infty} \delta_i.
 \end{equation}

Now we define the templates $\bL^i$ as follows (see Figure \ref{fig2}):
\begin{itemize}
  \item On the interval $[0, a_iT_{k_0}]$, set  $\bL^i$ to be the trivial template.

   \item Let $k\ge k_0$. On the subinterval $[a_i T_k, a_it_{k, q_k^{i-1}}]$ of $[a_iT_k, a_iT_{k+1}]$, set $\bL^i$ to be the trivial template. On the subinterval $[a_it_{k, q_k^{i-1}}, a_it_{k,q_k^{i}}]$,  set $\bL^i$ to be the standard template associated to the sequence of points
            \[(a_i t_{k, q_k^{i-1}}, 0), (a_i t_{k,q_k^{i-1}+1},\log \gamma_k),\ldots,  (a_i t_{k,q_k^{i}-1}, \log \gamma_k), (a_i t_{k,q_k^{i}},0).\]
      On the subinterval $[a_i t_{k,q_k^{i}}, a_i T_{k+1}]$, set $\bL^i$ to be the trivial template.
      \end{itemize}
  According to Lemma \ref{l-adm} and \eqref{e-valid-1}, all the sequences of points appearing above are admissible. Hence the construction is valid.
\\

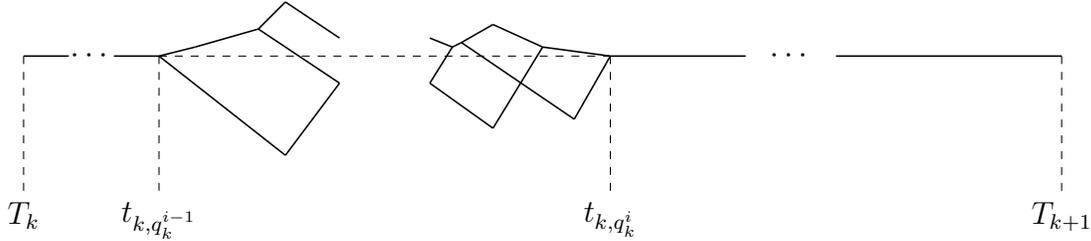
\begin{figure}[h!]
 \begin{center}
 \begin{tikzpicture}[scale=0.6]

   \draw[black, semithick] (5,1)--(7.8,-1.2);
      \draw[black, semithick] (7.8,-1.2)--(9,0.4);
      \draw[black, semithick] (5,1)--(5.8,1.2);
     \draw[black, semithick] (5.8,1.2)--(7.2,1.6);
     \draw[black, semithick] (7.2,1.6)--(9,0.4);
     \draw[black, semithick] (7.2,1.6)--(7.8,2.2);
    \draw[black, semithick] (7.8,2.2)--(9,1.4);


      \draw[black, semithick] (11,1.4)--(11.5,1.2);
      \draw[black, semithick] (11,0.4)--(11.5,1.2);
      \draw[black, semithick] (11,0.4)--(12.4,-0.6);
      \draw[black, semithick] (12.4,-0.6)--(13.5,1.2);
      \draw[black, semithick] (11.5,1.2)--(11.7,1.3);
      \draw[black, semithick] (11.7,1.3)--(12.4,1.7);
      \draw[black, semithick] (11.7,1.3)--(14.2,-0.4);
      \draw[black, semithick] (12.4,1.7)--(13.5,1.2);
    \draw[black, semithick] (13.5,1.2)--(15,1);
      \draw[black, semithick] (14.2,-0.4)--(15,1);


      \draw[black, semithick] (15,1)--(18,1);
      \draw[black, semithick] (20,1)--(25,1);
      \draw (19,1) node{$\cdots$};

      \draw[black, semithick] (2,1)--(3,1);
            \draw[black, semithick] (4,1)--(5,1);
      \draw (3.5,1) node{$\cdots$};

\draw[black, dashed, thin] (5,1)--(15,1) ;

\draw[black, dashed, thin] (2,1)--(2,-2) node[below] {$T_k$} ;
\draw[black, dashed, thin] (25,1)--(25,-2) node[below] {$T_{k+1}$} ;
\draw[black, dashed, thin] (15,1)--(15,-2) node[below] {$t_{k,q_k^i}$} ;
\draw[black, dashed, thin] (5,1)--(5,-2) node[below] {$t_{k,q_k^{i-1}}$} ;


  \end{tikzpicture}
 \end{center}
 \caption{$\bL^i(a_i t)$ on the interval $[T_k,T_{k+1}]$, $k\ge k_0$. }\label{fig2}
 \end{figure}

 \subsection{Proof of Lemma \ref{p-lower-e-key} }
 Arguing as in Section \ref{s-4}, to prove \eqref{e-del-dim}, \eqref{e-del-div-1} and \eqref{e-del-div-2}, it suffices to show that, for all $1\le j\le s$,
 \begin{eqnarray}
   \lim_{k\rightarrow \infty}\Delta(\bL^j, [a_jT_k, a_jT_{k+1}]) &=& m_jn_j-\delta_j b_j\label{e-local-1}, \\
   \lim_{k\rightarrow \infty} \frac{1}{\sqrt{T_k}}\int_{T_k}^{T_{k+1}} \mathbbm{1}_{[-C, C]} \left(\min_{\substack{1\le i\le s \\ i\ne j}}L^i_1(a_i t)\right)\dd t &=& 1-\sum_{i\ne j}\delta_i\label{e-local-2}, \\
   \lim_{k\rightarrow \infty} \frac{1}{\sqrt{T_k}}\int_{T_k}^{T_{k+1}} \mathbbm{1}_{[-C, C]} \left(\min_{1\le i\le s}L^i_1(a_i t)\right)\dd t &=& 1-\delta.\label{e-local-3}
 \end{eqnarray}

According to our construction and Lemma \ref{deltaST}(2), we have
\begin{align*}
& \lim_{k\rightarrow \infty}\Delta(\bL^j, [a_jT_k, a_jT_{k+1}])\\
= \ & \lim_{k\rightarrow \infty}\sum_{0\le l\le l_k-1} \frac{t_{k,l+1}-t_{k,l}}{\sqrt{T_k}} \Delta(\bL^j, [a_jt_{k,l}, a_jt_{k,l+1}])\\
= \ & \lim_{k\rightarrow \infty} \frac{1}{l_k}\left( (l_k-q_k^{j}+q_k^{j-1})m_jn_j+(q_k^{j}-q_k^{j-1})\left(m_jn_j-b_j+O\left(\frac{\log \gamma_k}{\gamma_k}\right)\right)\right)\\
= \ & m_jn_j-\delta_j b_j.
\end{align*}
 This proves \eqref{e-local-1}.

According to our construction and Lemma \ref{deltaST}(1), for any $C>0$, we have
\begin{align*}
  &\ \lim_{k\rightarrow \infty} \frac{1}{\sqrt{T_k}}\int_{T_k}^{T_{k+1}} \mathbbm{1}_{[-C, C]} \left(\min_{\substack{1\le i\le s \\ i\ne j}}L^i_1(a_i t)\right)\dd t \\
  &=\lim_{k\rightarrow \infty} \sum_{\substack{1\le i\le s \\ i\ne j}}\frac{q_k^{i}-q_k^{i-1}}{l_k}+o(1) \\
   &= 1-\sum_{i\ne j}\delta_i.
\end{align*}
This proves \eqref{e-local-2}. The proof of \eqref{e-local-3} is similar, hence omitted.
This completes the proof of Lemma \ref{p-lower-e-key}.\qed

\bigskip


\begin{thebibliography}{99}
	\bibitem{B}
A. Borel, \textsl{Density properties for certain subgroups of semi-simple groups without compact components},  Ann. of Math. (2) {\bf  72} (1960), 179--188.

\bibitem{Cheung} Y. Cheung, \textsl{Hausdorff dimension of the set of points on divergent trajectories of a homogeneous flow on a product space}, Ergodic Theory and Dynamical Systems {\bf  27}(1) (2007),  65--85.	
	
\bibitem{Ch}
Y. Cheung, \textsl{Hausdorff dimension of the set of singular pairs}, Ann. Math. {\bf 173} (2011), no. 1,  127--167.


\bibitem{ChCh} Y. Cheung and N. Chevallier, \textsl{Hausdorff dimension of singular vectors}, Duke Math. J.
{\bf 165} (2016), no. 12, 2273--2329.

\bibitem{Dani} S. Dani, \textsl{Divergent trajectories of flows on homogeneous spaces and Diophantine approximation}, J. Reine Angew. Math. {\bf 359} (1985), 55--89.

\bibitem{DFSU17} T.  Das, L. Fishman, D. Simmons and M. Urba\'nski, \textsl{A variational principle in the parametric geometry of numbers, with applications to metric Diophantine approximation},
Comptes Rendus Math\'ematique, {\bf 355} (2017), no. 8,  835--846.

\bibitem{VarPrinc} T.  Das, L. Fishman, D. Simmons and M.Urba\'nski, \textsl{A variational principle in the parametric geometry of numbers}, preprint, arXiv:1901.06602.

\bibitem{GS}
 L. Guan and R. Shi, \textsl{ Hausdorff dimension of divergent trajectories on homogeneous spaces}, Compositio Math. {\bf 156} (2020), no. 2, 340--359.


\bibitem{KKLM}
S. Kadyrov, D. Kleinbock, E. Lindenstrauss and G.A. Margulis, \textsl{  Singular systems of linear forms and non-escape of mass in the space of lattices},  J. Anal. Math. {\bf 133} (2017), 253--277.

\bibitem{KhinSing1} A. Ya.  Khintchine, \textsl{\"Uber singul\"are Zahlensysteme}, Compositio Math. {\bf 4} (1937),  424--431.

\bibitem{KhinSing2} A. Ya.  Khintchine, \textsl{Regular systems of linear equations and a general problem of \v{C}eby\v{s}ev}, Izvestiya Akad. Nauk SSSR. Ser. Mat. {\bf 12} (1948), 249--258.

\bibitem{Knapp} A. W.  Knapp, \textsl{Lie groups beyond an introduction}, Second edition, Progress in Mathematics, 140. Birkh\"auser Boston, Inc., Boston, MA, 2002.

\bibitem{LSST} L. Liao, R. Shi, O. Solan, N. Tamam, \textsl{Hausdorff dimension of weighted singular vectors in $\RR^2$}, J. Eur. Math. Soc. {\bf 22} (2020), no. 3, 833--875.

\bibitem{MoshSurv} N. G. Moshchevitin, \textsl{Khintchine's singular Diophantine systems and their applications}, Russian Math. Surveys {\bf 65} (2010), no. 3, 433--511.

\bibitem{Roy1} D. Roy, \textit{On Schmidt and Summerer parametric geometry of numbers}. Ann. of Math. {\bf 182} (2015), no. 2,  739--786.

\bibitem{SchmidtLuminy} W. M. Schmidt,  \textit{Open problems in Diophantine approximation }, Diophantine approximations and
transcendental numbers (Luminy, 1982), Progr. Math., vol. 31, Birkh\"auser Boston, Boston, MA, 1983,
pp. 271--287.

\bibitem{SS09} W. M. Schmidt and L. Summerer, \textit{Parametric geometry of num- bers and applications}. Acta Arithmetica {\bf  140} (2009), no. 1, 67--91.

\bibitem{SS13} W. M. Schmidt and L. Summerer, \textit{Diophantine approximation and parametric geometry of numbers}. Monatsch. Math. {\bf  169} (2013), no. 1,  51--104.

\bibitem{W} B. Weiss, \textit{Divergent trajectories on noncompact parameter spaces}, Geom. Funct. Anal. {\bf 14} (2004), no. 1, 94--149.

\bibitem{Y} L. Yang, \textit{Hausdorff dimension of divergent diagonal geodesics on product of finite-volume hyperbolic spaces}, Ergodic Theory Dynam. Systems {\bf 39} (2019), no. 5, 1401--1439.

\end{thebibliography}
\end{document}